\theoremstyle{plain}
\newtheorem{lemma}{Lemma}[section]
\newtheorem{prp}[lemma]{Proposition}
\newtheorem{thm}[lemma]{Theorem}
\newtheorem*{thm*}{Theorem}
\newtheorem{crl}[lemma]{Corollary}
\newtheorem*{crl*}{Corollary}
\newtheorem*{definition*}{Definition}
\newtheorem*{clm*}{Claim}
\newtheorem{question}[lemma]{Question}
\newtheorem{claim}[lemma]{Claim}
\newtheorem{dfn}[lemma]{Definition}
\newtheorem{remark}{Remark}
\newcommand{\om}{\omega}
\newcommand{\ZFC}{\mathsf{ZFC}}
\newcommand{\CH}{\mathsf{CH}}
\newcommand{\forces}{\Vdash}
\newcommand{\F}{\mathcal F}
\newcommand{\w}{\omega}
\newcommand{\B}{\mathcal B}
\newcommand{\C}{\mathcal C}
\newcommand{\J}{\mathcal J}
\newcommand{\A}{\mathcal A}
\title{Filters, ideal independence and ideal Mr\'owka spaces}
\author{S. Bardyla}
\address{Institute of Mathematics, Univeristy of Vienna, Kolingasse 14-16, 1090 Vienna, Austria}
\email{sbardyla@gmail.com}
\author{J. Cancino-Manr\'iquez}
\address{Institute of Mathematics, Czech Academy of Sciences, \v{Z}itn\'a 25, Praha 1,
Praha, Czech Republic}
\email{cancino@math.cas.cz}
\author{V. Fischer}
\address{Institute of Mathematics, Univeristy of Vienna, Kolingasse 14-16, 1090 Vienna, Austria}
\email{vera.fischer@univie.ac.at}
\author{C. Bacal Switzer}
\address{Institute of Mathematics, Univeristy of Vienna, Kolingasse 14-16, 1090 Vienna, Austria}
\email{corey.bacal.switzer@univie.ac.at}
\thanks{\emph{Acknowledgments.}: The first author was supported by the Austrian Science Fund FWF (Grant  ESP 399). The second, third and fourth authors would like to thank the Austrian Science Fund (FWF) for the generous support through START Grant Y1012-N35. 
}
\subjclass[2000]{03E35, 03E17, 54A35}
\keywords{Ideal independent family, ideal, filter, ultrafilter, cardinal characteristic, Mrowka space}
\begin{document}

\begin{abstract}
A family $\mathcal{A} \subseteq [\omega]^\omega$ such that for all finite $\{X_i\}_{i\in n}\subseteq \mathcal A$ and $A \in \mathcal{A} \setminus \{X_i\}_{i\in n}$, the set $A \setminus \bigcup_{i \in n} X_i$ is infinite, is said to be ideal independent. 

We prove that an ideal independent family $\A$ is maximal if and only if $\A$ is $\J$-completely separable and maximal $\J$-almost disjoint for a particular ideal $\J$ on $\omega$. We show that $\mathfrak{u}\leq\mathfrak{s}_{mm}$, where $\mathfrak{s}_{mm}$ is the minimal cardinality of maximal ideal independent family. This, in particular, establishes the independence of $\mathfrak{s}_{mm}$ and $\mathfrak{i}$. Given an arbitrary set  $C$ of uncountable cardinals, we show how to simultaneously adjoin via forcing maximal ideal independent families of  cardinality $\lambda$ for each $\lambda\in C$, thus establishing the consistency of $C\subseteq \hbox{spec}(\mathfrak{s}_{mm})$. Assuming $\CH$, we construct  a maximal ideal independent family, which remains maximal after forcing with any proper, $^\omega\omega$-bounding, $p$-point preserving forcing notion and evaluate $\mathfrak{s}_{mm}$ in several well studied forcing extensions.

We also study natural filters associated with ideal independence and introduce an analog of Mr\'owka spaces for ideal independent families.
\end{abstract}

\maketitle

\section{Introduction}

Given a family $\mathcal A \subseteq [\omega]^\omega$, the ideal generated by $\A$ is the collection of all $X \subseteq \omega$ so that $X \subseteq^* \bigcup_{i < n} A_i$ for some $\{A_i\}_{i\in n}\subseteq \mathcal A$ where $\subseteq^*$ means inclusion mod finite. A family $\mathcal{A}$ is {\em ideal independent} if no $A \in \mathcal{A}$ is in the ideal generated by $\mathcal{A} \setminus \{A\}$. 
Almost disjoint families and independent families are both examples of ideal independent families. An easy application of Zorn's lemma shows that there are maximal ideal independent families, however it is not always the case that a maximal almost disjoint or a maximal independent family is maximal ideal independent. In this paper to each ideal independent family $\A$ we correspond an ideal $\J(\A)$ (see Definition~\ref{new}) which allows us to characterize maximal ideal independent families as follows (see Theorem~\ref{char}):

\begin{thm*}
 An ideal independent family $\A$ is maximal if and only if $\A$ is maximal $\J(\A)$-
almost disjoint and $\J(\A)$-completely separable.   
\end{thm*}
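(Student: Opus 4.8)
The plan is to boil both sides of the equivalence down to a statement about which sets can be adjoined to $\mathcal A$, and then match them. Write $\mathcal I(\mathcal A)=\{X\subseteq\omega : X\subseteq^{*}A_{0}\cup\dots\cup A_{n-1}\text{ for some }A_{0},\dots,A_{n-1}\in\mathcal A\}$ for the ideal generated by $\mathcal A$, so ideal independence of $\mathcal A$ just says $A\in\mathcal I(\mathcal A\setminus\{A\})^{+}$ for every $A\in\mathcal A$. From Definition~\ref{new} I will use that $\mathcal A\subseteq\mathcal J(\mathcal A)^{+}$, that $\mathcal A$ is $\mathcal J(\mathcal A)$-almost disjoint, and, crucially, the local description: for each $A\in\mathcal A$ the trace $\mathcal J(\mathcal A)\restriction A:=\{Y\subseteq A : Y\in\mathcal J(\mathcal A)\}$ is precisely the ideal on $A$ generated by $\{A\cap B : B\in\mathcal A\setminus\{A\}\}$ together with the finite subsets of $A$. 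The first and main step is the reformulation: \emph{$\mathcal A$ is not maximal ideal independent iff there is $B\in\mathcal I(\mathcal A)^{+}$ with $A\setminus B\notin\mathcal J(\mathcal A)$ for every $A\in\mathcal A$.} This is a direct unwinding: $\mathcal A\cup\{B\}$ is ideal independent iff (i) $B\notin\mathcal I(\mathcal A)$, i.e. $B\in\mathcal I(\mathcal A)^{+}$, and (ii) for every $A\in\mathcal A$ and every finite $\mathcal F\subseteq\mathcal A\setminus\{A\}$ the set $(A\setminus B)\setminus\bigcup\mathcal F$ is infinite; reading (ii) through the local description, (ii) for a fixed $A$ says exactly $A\setminus B\notin\mathcal J(\mathcal A)$. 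Contrapositively: $\mathcal A$ is maximal iff for every $B\in\mathcal I(\mathcal A)^{+}$ there is $A\in\mathcal A$ with $A\setminus B\in\mathcal J(\mathcal A)$, i.e. $A$ is contained in $B$ modulo $\mathcal J(\mathcal A)$.

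For the implication $(\Rightarrow)$, assume $\mathcal A$ is maximal. That $\mathcal A$ is maximal $\mathcal J(\mathcal A)$-almost disjoint comes out of the reformulation: if some $\mathcal J(\mathcal A)$-positive $Y\notin\mathcal A$ were $\mathcal J(\mathcal A)$-almost disjoint from every member of $\mathcal A$, then $Y\cap A\in\mathcal J(\mathcal A)$ for all $A$, hence $A\setminus Y=_{\mathcal J(\mathcal A)}A\notin\mathcal J(\mathcal A)$ for all $A$, and also $Y\notin\mathcal I(\mathcal A)$ (otherwise $Y$ would be a finite union of $\mathcal J(\mathcal A)$-sets mod finite, contradicting $Y\in\mathcal J(\mathcal A)^{+}$); so the reformulation would make $\mathcal A$ non-maximal. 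For complete separability, let $X$ be positive with respect to the ideal relative to which $\mathcal J(\mathcal A)$-complete separability is measured; since that ideal contains $\mathcal I(\mathcal A)$ we have $X\in\mathcal I(\mathcal A)^{+}$, and the reformulation hands us $A\in\mathcal A$ with $A\setminus X\in\mathcal J(\mathcal A)$, which is exactly the defining property of $\mathcal J(\mathcal A)$-complete separability.

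For $(\Leftarrow)$, assume $\mathcal A$ is maximal $\mathcal J(\mathcal A)$-almost disjoint and $\mathcal J(\mathcal A)$-completely separable, fix $B\in\mathcal I(\mathcal A)^{+}$, and look for $A\in\mathcal A$ with $A\setminus B\in\mathcal J(\mathcal A)$; the reformulation then gives maximality. If $B$ stays positive after also modding out by $\mathcal J(\mathcal A)$, complete separability produces such an $A$ directly. Otherwise $B\setminus\bigcup\mathcal F\in\mathcal J(\mathcal A)$ for some finite $\mathcal F\subseteq\mathcal A$, and since $B\in\mathcal I(\mathcal A)^{+}$ the set $B':=B\setminus\bigcup\mathcal F$ is infinite, lies in $\mathcal J(\mathcal A)$, and is again $\mathcal I(\mathcal A)$-positive; it suffices to find $A$ with $A\setminus B'\in\mathcal J(\mathcal A)$. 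This case is where I expect the real work to be: $B'\in\mathcal J(\mathcal A)$ makes $B'$ $\mathcal J(\mathcal A)$-almost disjoint from every member of $\mathcal A$, so if no member of $\mathcal A$ were contained in $B'$ modulo $\mathcal J(\mathcal A)$ we could, by the reformulation, adjoin $B'$ to $\mathcal A$ — and one must check that the two hypotheses genuinely forbid this, i.e. that modding out further by $\mathcal J(\mathcal A)$ cannot destroy $\mathcal I(\mathcal A)$-positivity on the sets that matter (equivalently, that $\mathcal J(\mathcal A)\cap[\omega]^{\omega}\subseteq\mathcal I(\mathcal A)$ holds for $\mathcal A$ satisfying the right-hand side). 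Controlling these ``$\mathcal I(\mathcal A)$-large but $\mathcal J(\mathcal A)$-small'' sets is the one point at which the concrete definition of $\mathcal J(\mathcal A)$ and the $\mathcal J(\mathcal A)$-almost disjointness of $\mathcal A$ must be used in an essential, not merely formal, way; everything else is the bookkeeping of the reformulation.
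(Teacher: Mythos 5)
Your reformulation of maximality --- $\mathcal{A}$ is maximal if and only if every $B$ outside the ideal $\mathcal{I}(\mathcal{A})$ generated by $\mathcal{A}$ admits some $A\in\mathcal{A}$ with $A\setminus B\in\mathcal{J}(\mathcal{A})$ --- is correct (the local identification of $\mathcal{J}(\mathcal{A})$ below a fixed $A$ with $\mathcal{J}(\mathcal{A},A)$ does hold, since every generator $C\cap D$ of $\mathcal{J}(\mathcal{A})$ meets $A$ inside some $A\cap B$ with $B\neq A$), and it packages exactly the computation the paper performs; your forward direction is essentially the paper's. Two points in your backward direction need attention, and both are easier than you anticipate. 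First, the ``real work'' you defer is immediate from Definition~\ref{new}: every generator $A\cap B$ of $\mathcal{J}(\mathcal{A})$ is a subset of $A\in\mathcal{A}$, so $\mathcal{J}(\mathcal{A})\subseteq\mathcal{I}(\mathcal{A})$ outright. Consequently your second case is vacuous: if $B'=B\setminus\bigcup\mathcal{F}$ lay in $\mathcal{J}(\mathcal{A})$ it would lie in $\mathcal{I}(\mathcal{A})$, and then so would $B$, contradicting $B\in\mathcal{I}(\mathcal{A})^{+}$. In other words $\mathcal{I}(\mathcal{A})^{+}\subseteq\mathcal{A}^{+_{\mathcal{J}(\mathcal{A})}}$ and only your first case occurs --- there are no ``$\mathcal{I}(\mathcal{A})$-large but $\mathcal{J}(\mathcal{A})$-small'' sets at all. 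Second, in that first case the phrase ``complete separability produces such an $A$ directly'' hides the one place where the other hypothesis is used: complete separability is stated for sets in $\mathcal{A}^{++_{\mathcal{J}(\mathcal{A})}}$, and passing from $\mathcal{A}^{+_{\mathcal{J}(\mathcal{A})}}$ to $\mathcal{A}^{++_{\mathcal{J}(\mathcal{A})}}$ is exactly where maximal $\mathcal{J}(\mathcal{A})$-almost disjointness enters (the two positivity notions coincide for maximal $\mathcal{J}$-almost disjoint families, as the paper notes). With these two observations supplied, your argument is complete and coincides with the paper's proof.
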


By $\mathfrak{s}_{mm}$ we denote the least cardinality of a maximal ideal independent family.
An earlier investigation of $\mathfrak{s}_{mm}$ can be found \cite{cancino_guzman_miller_2021}, where it is shown that ${\rm max}\{\mathfrak{d}, \mathfrak{r}\}\leq\mathfrak{s}_{mm}$ and that each of the following inequalities $\mathfrak{u} < \mathfrak{s}_{mm}$, $\mathfrak{s}_{mm} < \mathfrak{i}$, $\mathfrak{s}_{mm} < \mathfrak{c}$ is consistent. Here $\mathfrak{d}$, $\mathfrak{u}$, $\mathfrak{r}$,  $\mathfrak{i}$ denote the dominating number, the ultrafilter number, the reaping and  independence numbers, respectively.  We refer the reader to \cite{blass_handbook} for  definitions and basic properties of the combinatorial cardinal characteristics, which are not stated here. Strengthening and complementing the above results, in Section 2, we establish the following $\ZFC$ inequality, which also answers  Question 17 of \cite{cancino_guzman_miller_2021}, see Theorem \ref{mainthm1}:

\begin{thm*}
$\mathfrak{u}\leq \mathfrak{s}_{mm}$. 
\end{thm*}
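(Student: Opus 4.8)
The plan is to extract from a maximal ideal independent family $\A$ of size $\mathfrak{s}_{mm}$ a \emph{free ultrafilter on $\om$ of character at most $|\A|$}; since $\mathfrak{u}$ is by definition the least character of a free ultrafilter on $\om$, this yields $\mathfrak{u}\le\mathfrak{s}_{mm}$. So fix a maximal ideal independent $\A$ with $|\A|=\mathfrak{s}_{mm}$ and put $\J=\J(\A)$. By Theorem~\ref{char}, $\A$ is maximal $\J$-almost disjoint and $\J$-completely separable, and I will freely use the following, all immediate from Definition~\ref{new}: $\A\subseteq\J^+$, $A\cap B\in\J$ for distinct $A,B\in\A$, $\J$ contains all finite sets, and $\J$ is generated by at most $|\A|$ sets (so $\J$ has a base $\B$ with $|\B|\le|\A|$).

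The crux is the claim that for every $A\in\A$ the trace $\J\restriction A=\{X\in\J:X\subseteq A\}$ is a \emph{maximal} ideal on $A$. In the Boolean algebra $\P(\om)/\J$ this says that the classes $[A]_\J$ ($A\in\A$) are atoms: they are nonzero (as $A\notin\J$), pairwise disjoint (as $A\cap B\in\J$), and by $\J$-complete separability they form a $\pi$-base, and a $\pi$-base consisting of pairwise disjoint nonzero elements must consist of atoms. Spelled out: if some $Y\subseteq A$ had $Y\notin\J$ and $A\setminus Y\notin\J$, then $Y\in\J^+$, so $\J$-complete separability gives $B\in\A$ with $B$ almost contained in $Y$, i.e.\ $B\setminus Y\in\J$; if $B\ne A$ then $B\cap Y\subseteq B\cap A\in\J$, hence $B=(B\cap Y)\cup(B\setminus Y)\in\J$, contradicting $B\in\J^+$; so $B=A$, giving $A\setminus Y\in\J$, a contradiction.

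Granting the claim, for each $A\in\A$ the dual filter $\U_A=\{Y\subseteq A:A\setminus Y\in\J\}$ is an ultrafilter on $A$, and it is free because $A\notin\J$ while every finite subset of $A$ lies in $\J$. A base of $\U_A$ is $\{A\setminus b:b\in\B\}$: each $A\setminus b$ belongs to $\U_A$ since $A\cap b\in\J$, and if $Y\in\U_A$ then $A\setminus Y\in\J$ is almost contained in some $b\in\B$, whence $A\setminus b\subseteq^* Y$. Thus $\chi(\U_A)\le|\B|\le|\A|$, and transporting $\U_A$ along any bijection $A\to\om$ produces a free ultrafilter on $\om$ of character at most $|\A|=\mathfrak{s}_{mm}$; hence $\mathfrak{u}\le\mathfrak{s}_{mm}$.

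The step I expect to be the real obstacle is the maximality of $\J\restriction A$: this is exactly where Theorem~\ref{char}, and through it the maximality of $\A$, is used essentially. One should also be slightly careful about the precise formulation of ``$\J$-completely separable'' (whether ``$B$ almost contained in $Y$'' is read modulo finite or modulo $\J$), though both readings make the computation in the second paragraph go through. Everything else --- bases of ideals, their dual filters, transport along a bijection --- is routine bookkeeping.
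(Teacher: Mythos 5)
Your overall strategy --- extract from a witness $\A$ for $\mathfrak{s}_{mm}$ a complemented filter that is an ultrafilter of character at most $|\A|$ --- is in the right spirit, but the central claim (that $\J(\A)\upharpoonright A$ is a maximal ideal on $A$, equivalently that every $\F(\A,A)$ is an ultrafilter) is false, and the step deriving it misapplies complete separability. $\J(\A)$-complete separability only applies to sets in $\A^{++_{\J(\A)}}$, i.e.\ sets having $\J(\A)$-positive intersection with \emph{infinitely many} members of $\A$ (equivalently, since $\A$ is maximal $\J(\A)$-almost disjoint, sets in $\A^{+_{\J(\A)}}$, not covered mod $\J(\A)$ by finitely many members of $\A$). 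Your set $Y\subseteq A$ is never such a set: for every $B\in\A\setminus\{A\}$ one has $Y\cap B\subseteq A\cap B\in\J(\A)$, and $Y$ is covered by $A$ alone, so $Y\notin\A^{+_{\J(\A)}}=\A^{++_{\J(\A)}}$ no matter how $\J$-positive $Y$ is. Hence no $B$ with $B\subseteq^{\J}Y$ is supplied, the classes $[A]_{\J}$ need not form a $\pi$-base of $\P(\omega)/\J(\A)$, and the atomicity you want is precisely what is missing rather than a consequence of pairwise disjointness. (The subtlety you flagged --- mod finite versus mod $\J$ in the conclusion of separability --- is not the problem; the domain of applicability of separability is.) The claim also has genuine counterexamples: for a completely separable MAD family $\A$ (which exists under $\CH$), $\A$ is maximal ideal independent by Corollary~\ref{csmad}, yet $\J(\A)=\mathsf{Fin}$ and every $\F(\A,A)$ is merely the Fr\'echet filter on $A$, so no trace $\J(\A)\upharpoonright A$ is maximal.

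What is actually true is the Proposition following Theorem~\ref{mainthm1}: a maximal ideal independent family of size $<\mathfrak{c}$ has at most finitely many $A$ whose complemented filter fails to be an ultrafilter. Combined with your (correct) observation that $\F(\A,A)$ has a base of size at most $|\A|$, and the triviality of the case $\mathfrak{s}_{mm}=\mathfrak{c}$, this would finish the proof. But that Proposition is essentially the theorem itself, and the paper proves it by a diagonalization your outline does not contain: assuming infinitely many $A_n$ with non-ultrafilter complemented filters and $\bigcup_n A_n=^*\omega$, one chooses for each $s\in 2^{n}$ pairwise disjoint positive-but-not-in-the-filter pieces $D_s\subseteq B_n$, forms the $2^{\aleph_0}$ many sets $D^f=\bigcup_n D_{f\upharpoonright n}$, each outside the ideal generated by $\A$, and uses $|\A|<\mathfrak{c}$ to find $f\ne g$ handled by the same pair $(A_f,\mathcal{F}_f)$, which is contradictory. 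Some such counting argument using $|\A|<\mathfrak{c}$ is unavoidable here; complete separability alone cannot deliver the key claim.
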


Consequently, we obtain the independence of $\mathfrak{s}_{mm}$ and $\mathfrak{i}$, as the consistency of $\mathfrak{s}_{mm} < \mathfrak{i}$ is shown in \cite[Theorem 16]{cancino_guzman_miller_2021}, while the consistency of $\mathfrak{i} < \mathfrak{u}$ is established in Shelah's \cite{con_i_u} and hence by the above theorem, $\mathfrak{i} < \mathfrak{s}_{mm}$ holds in the latter model.

\begin{crl*}
$\mathfrak{s}_{mm}$ and $\mathfrak{i}$ are independent.
\end{crl*}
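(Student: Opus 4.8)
The plan is to obtain the corollary as a direct bookkeeping consequence of the $\ZFC$ inequality $\mathfrak{u}\le\mathfrak{s}_{mm}$ proved above together with two consistency results already in the literature. Recall that two cardinal characteristics are \emph{independent} exactly when $\ZFC$ proves no inequality between them in either direction, equivalently when each of the two strict inequalities is consistent with $\ZFC$. So it suffices to exhibit a model of $\mathfrak{s}_{mm}<\mathfrak{i}$ and a model of $\mathfrak{i}<\mathfrak{s}_{mm}$.

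For the first direction I would simply invoke \cite[Theorem 16]{cancino_guzman_miller_2021}, which already produces a forcing extension satisfying $\mathfrak{s}_{mm}<\mathfrak{i}$; no further work is required. For the second direction I would start from Shelah's model of $\mathfrak{i}<\mathfrak{u}$ from \cite{con_i_u}. Since $\mathfrak{u}\le\mathfrak{s}_{mm}$ is a theorem of $\ZFC$, it holds in that model as well, and chaining the inequalities gives $\mathfrak{i}<\mathfrak{u}\le\mathfrak{s}_{mm}$, hence $\mathfrak{i}<\mathfrak{s}_{mm}$. Thus $\mathfrak{i}<\mathfrak{s}_{mm}$ is consistent (carrying along unchanged whatever large-cardinal hypothesis Shelah's construction needs).

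Putting the two models together shows that neither $\mathfrak{s}_{mm}\le\mathfrak{i}$ nor $\mathfrak{i}\le\mathfrak{s}_{mm}$ is provable in $\ZFC$, which is the statement of the corollary. There is no real obstacle in the corollary itself: all the genuine content lives in the inequality $\mathfrak{u}\le\mathfrak{s}_{mm}$ on which it depends, and the only point deserving a moment's care is to check that the hypotheses of \cite[Theorem 16]{cancino_guzman_miller_2021} and of Shelah's theorem are genuinely satisfied, so that the two cited models behave as asserted.
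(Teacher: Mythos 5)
Your argument is exactly the paper's: the consistency of $\mathfrak{s}_{mm}<\mathfrak{i}$ is quoted from \cite[Theorem 16]{cancino_guzman_miller_2021}, and the consistency of $\mathfrak{i}<\mathfrak{s}_{mm}$ is obtained by combining Shelah's model of $\mathfrak{i}<\mathfrak{u}$ from \cite{con_i_u} with the $\ZFC$ inequality $\mathfrak{u}\leq\mathfrak{s}_{mm}$ of Theorem~\ref{mainthm1}. The proposal is correct and follows the same route as the paper.
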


A key role in our investigations is taken by specific filters, which are naturally associated to a given ideal independent family. On one side, these are filters to which we refer as {\em complemented filters}, see Definition \ref{new} and on the other side, filters resembling the notion of a diagonalization filter for an independent family, see for example \cite[Definition 1]{VFSS1}. In difference with earlier instances of diagonalization reals, associated to say almost disjoint families, towers, or cofinitary groups, the existence of a diagonalization
real for a given ideal independent family, employs a Cohen real (see Lemma \ref{extension_lemma}). Adjoining diagonalization reals for ideal independent families cofinally along an appropriate finite support iteration, as well as building on and modifying earlier forcing constructions used to  control for example the spectrum of independence (see in particular \cite{VFSS1, VFSS2}) we establish the following (see Theorem \ref{mainthm2}):


\begin{thm*} 
(GCH) Let $R$ be a set of regular uncountable cardinals. Then, there is a ccc generic extension in which for every $\lambda\in R$ there is a maximal ideal independent family of cardinality $\lambda$. 
\end{thm*}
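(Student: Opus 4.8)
The plan is to construct, by transfinite recursion of length $\omega_2$ (after first collapsing/arranging so that $\mathfrak{c}=\omega_2$ in the final model, or more generally handling the maximal element of $R$), a finite support iteration of ccc forcings that simultaneously builds the desired maximal ideal independent families. Let me describe the structure. First I would fix $R$ a set of regular uncountable cardinals and let $\kappa = \sup R$ (or $\kappa^+$ if $\kappa\in R$); by GCH we may assume $2^{<\kappa}=\kappa$ after a preliminary ccc forcing, so cardinal arithmetic is under control. The iteration $\langle \P_\alpha, \dot{\Q}_\alpha : \alpha < \kappa\rangle$ will have two jobs: (1) for each $\lambda\in R$, reserve a "block" of coordinates of length $\lambda$ along which we generically add, one set at a time via Cohen-real-assisted forcing, the members $A_\xi$ ($\xi<\lambda$) of a candidate ideal independent family $\A_\lambda$; and (2) at all coordinates, use a bookkeeping function to anticipate every potential "witness" that $\A_\lambda$ fails to be maximal — i.e. every $\P_\kappa$-name for an infinite set $X\subseteq\omega$ — and at the appropriate stage force either that $X$ is mod-finite covered by finitely many members already in $\A_\lambda$, or that $X\setminus\bigcup_{i<n}A_{\xi_i}$ is co-infinite in $A_{\xi}$ for some already-added $A_\xi$ (so that $X$ cannot be consistently adjoined). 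The diagonalization step is exactly the content of Lemma~\ref{extension_lemma}: given the current ideal independent family and a set $X$, there is a ccc forcing (using a Cohen real) adding a new set $A$ such that $\A\cup\{A\}$ is still ideal independent and $A$ kills $X$ as a maximality-witness. One runs this cofinally in each block of length $\lambda$, so that in the end $\A_\lambda$ has size exactly $\lambda$ and is maximal.

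The key steps, in order: (i) Set up the bookkeeping: since $\kappa$ is regular and $2^{<\kappa}=\kappa$, there are $\kappa$ many nice $\P_\kappa$-names for elements of $[\omega]^\omega$, and we interleave, for each $\lambda\in R$, a cofinal-in-$\lambda$-many-times treatment of every such name, reflecting the name down to some $\P_\alpha$ with $\alpha$ in the $\lambda$-block; a standard $\Delta$-system / reflection argument for finite support ccc iterations guarantees that a name for a real appears at some proper initial segment. (ii) At stage $\alpha$ belonging to the block for $\lambda$, let $\Q_\alpha$ be the Lemma~\ref{extension_lemma} forcing relative to $\A_\lambda\restriction\alpha$ (the part built so far) and the reflected name $\dot X$; it is ccc (indeed $\sigma$-centered, being essentially Cohen forcing times a closing-off poset), so the finite support iteration remains ccc. (iii) After $\kappa$ stages, argue $\A_\lambda = \{A_\xi : \xi \text{ in the }\lambda\text{-block}\}$ has cardinality $\lambda$ and is ideal independent (each new $A$ was added to preserve this, and ideal independence of the whole family follows since any finite subfamily and any additional member already appeared at some stage $<\kappa$). (iv) Argue maximality: if $X\in [\omega]^\omega$ in the extension were a counterexample, it has a name appearing at some stage $\alpha$ in the $\lambda$-block, and by bookkeeping we handled it at some later stage in the block, so $X$ is either absorbed into the ideal generated by $\A_\lambda$ or destroyed — contradiction. (v) Finally, in the extension $\mathfrak{c} = \kappa^{(+)}$ and for each $\lambda\in R$ we have exhibited a maximal ideal independent family of size $\lambda$.

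The main obstacle — and the place where real care is needed — is step (ii)–(iii): ensuring that inserting the new set $A$ via the Cohen-assisted forcing genuinely preserves ideal independence of the \emph{entire} growing family $\A_\lambda$ and, crucially, does not destroy the ideal independence of the families $\A_{\lambda'}$ for the \emph{other} $\lambda'\in R$ being built in parallel. This is the standard difficulty in "spectrum" constructions (as in \cite{VFSS1, VFSS2} for the independence spectrum): one needs the individual forcings to be sufficiently mutually generic / Cohen-like that a real added for the sake of $\A_\lambda$ does not accidentally witness non-independence in $\A_{\lambda'}$. The resolution is to observe that Lemma~\ref{extension_lemma}'s forcing adds its set $A$ essentially as a Cohen-coded subset of $\omega$, and Cohen reals over a model preserve ideal independence of ground-model ideal independent families (one checks: a Cohen-generic subset $A$ of $\omega$ has the property that $B\setminus(A\cup F)$ is infinite for every ground-model infinite $B$ and finite $F$, by genericity); combined with a careful interleaving of the blocks so that at each stage the relevant families are in the intermediate model, this gives preservation. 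A secondary technical point is verifying the bookkeeping actually catches every potential witness — this is routine once one notes that in a finite support ccc iteration of length a regular $\kappa$ with small iterands, every real in the final model appears at a stage below $\kappa$ and the supports are bounded, so reflection to $\P_\alpha$ works.
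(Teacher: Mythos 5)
There is a genuine gap in your architecture, and it is not the one you flag as the ``main obstacle.'' The tension is between maximality and cardinality when $\lambda<\kappa=\sup R$. To get maximality of $\A_\lambda$ in $V^{\mathbb{P}_\kappa}$ you must diagonalize against every real of the final model; since the iteration is ccc of regular length $\kappa$, every real appears at some stage $\alpha<\kappa$, so the stages at which you add new members to $\A_\lambda$ must be \emph{cofinal} in $\kappa$. But each diagonalization adds a new member to $\A_\lambda$ (that is how Lemma~\ref{extension_lemma} kills a witness: the generic $z$ joins the family), and to keep $|\A_\lambda|=\lambda$ you may add at most $\lambda$ of them. A set of $\lambda<\kappa$ coordinates cannot be cofinal in the regular cardinal $\kappa$, so any real appearing after the supremum of your $\lambda$-block is never handled and $\A_\lambda$ fails to be maximal; conversely, diagonalizing cofinally in $\kappa$ inflates the family to size $\kappa$. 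Your step (iv) silently assumes every name appears ``at some stage in the $\lambda$-block,'' which is exactly what fails. Separately, the obstacle you do worry about --- that a generic added for $\A_\lambda$ might destroy ideal independence of $\A_{\lambda'}$ --- is a non-issue: ideal independence of a fixed family is the assertion that certain fixed sets $A\setminus\bigcup\F$ are infinite, which is absolute under any forcing; what forcing can destroy is only \emph{maximality}, by adding new candidate sets.

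The paper's proof resolves the size/cofinality tension with a different architecture, and it is worth seeing why. First add $\kappa$ Cohen reals so that $\mathfrak{c}=\kappa$ and fix, in that model, a ground ideal independent family $\A_\lambda$ of size $\lambda$ for each $\lambda\in R$ (e.g.\ an almost disjoint family); the bulk of each family is \emph{not} added generically. Then run a finite support ccc iteration of length $\omega_1$ in which each stage is itself a short iteration applying the forcing of Lemma~\ref{extension_lemma} once to each of the current families. The crucial point, which your summary of Lemma~\ref{extension_lemma} understates, is that the single generic set $z$ it adds kills \emph{every} ground-model candidate $y\notin\A$ simultaneously (for each old $y$, either $y\subseteq^*z$ or $y$ almost contains some $x\cap(B\setminus\bigcup\F)$), so no name-by-name bookkeeping is needed: one diagonalization per stage handles all reals of the previous model. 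Since the iteration has length (hence cofinality) $\omega_1$, every real of the final model appears at a countable stage and is killed at the next one, while only $\aleph_1\leq\lambda$ new sets are adjoined to each $\A_\lambda$, so the final family has size exactly $\lambda$. Your approach, restricted to $\lambda=\kappa$, could be repaired along standard lines, but as stated it does not produce maximal families of the intermediate cardinalities.
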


Moreover, we look at the preservation of small witnesses to $\mathfrak{s}_{mm}$. The preservation of the maximality of extremal sets  of reals, like mad families, maximal eventually different families of reals, or maximal independent families, under forcing iterations is usually a non-trivial task and often involves the construction of a combinatorial object which is maximal in a strong sense, examples given by tight almost disjoint and selective independent families. Partially inspired by the notion of a $\mathcal{U}$-supported maximal independent family in the higher Baire spaces given in \cite{VFDM}, in Definition \ref{encompassing} we introduce the notion of an {\emph{$\mathcal{U}$-encompassing ideal independent family}} and establish the following powerful preservation result (see Theorems \ref{mainthm3} and \ref{preserving_encompassing}). 

\begin{thm*}
(CH) There is a maximal ideal independent family $\mathcal{A}$ which remains maximal, and so a witness to  $\mathfrak{s}_{mm} = \aleph_1$, in any generic extension obtained by a proper, $\om^\om$-bounding, $p$-points preserving forcing notion.
\end{thm*}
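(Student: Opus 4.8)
The plan is to obtain this statement by combining the two results established in Section~3: under $\CH$ there is a $\U$-encompassing maximal ideal independent family (Theorem~\ref{mainthm3}) for a suitably chosen $p$-point $\U$, and every $\U$-encompassing maximal ideal independent family stays maximal after forcing with any proper, $\om^\om$-bounding, $p$-point preserving poset (Theorem~\ref{preserving_encompassing}). Since a maximal ideal independent family is always of size at least $\aleph_1$ (indeed $\max\{\mathfrak d,\mathfrak r\}\leq\mathfrak s_{mm}$), once such an $\A$ of size $\aleph_1$ survives into a generic extension $V[G]$ we get $\mathfrak s_{mm}=\aleph_1$ there, irrespective of the value of $\mathfrak c$ in $V[G]$. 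So the work is entirely in the construction and in the preservation lemma.

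For the construction I would fix a $p$-point $\U$ on $\om$ and build, by recursion of length $\om_1$, an increasing chain $\langle\A_\alpha:\alpha<\om_1\rangle$ of countable ideal independent families, carrying along the complemented filters of Definition~\ref{new} and maintaining the $\U$-encompassing property of Definition~\ref{encompassing} as an invariant. Using $\CH$, enumerate in order type $\om_1$ all subsets of $\om$ together with, via a routine coding, a set of auxiliary tasks rich enough to decide the behaviour of any future real relative to $\A$ and $\U$. At stage $\alpha$, given the current candidate $Y_\alpha$, if $Y_\alpha$ is not already excluded from extending $\A_\alpha$ then adjoin to $\A_\alpha$ a diagonalization real that keeps the family ideal independent while recording a witness to the dichotomy of Theorem~\ref{char} for $Y_\alpha$. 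The point at which $\CH$ is genuinely needed — rather than a bare Zorn's lemma argument — is precisely this diagonalization step: by Lemma~\ref{extension_lemma} the required real is produced with the help of a Cohen real, and over the $\om_1$ stages one must absorb these countably-many-at-a-time Cohen reals while keeping both the encompassing invariant and $\U$ intact. Then $\A=\bigcup_{\alpha<\om_1}\A_\alpha$ is a $\U$-encompassing maximal ideal independent family of cardinality $\aleph_1$.

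For the preservation step, let $\mathbb P$ be proper, $\om^\om$-bounding and $p$-point preserving, assume toward a contradiction that some $p\in\mathbb P$ forces $\dot Y\in[\om]^\om$ to witness that $\A$ is no longer maximal, and work below $p$. Properness together with $\om^\om$-bounding lets us dominate the enumeration function of $\dot Y$ and the relevant Skolem data by ground-model functions, so that $\dot Y$ is captured by a ground-model interval partition; $p$-point preservation guarantees that $\U$ still generates an ultrafilter in $V[G]$, so some $U\in\U$ decides $\dot Y$, i.e.\ either $U\subseteq^*\dot Y$ or $U\cap\dot Y$ is finite. Feeding this ground-model-captured, $\U$-decided approximation of $\dot Y$ into the $\U$-encompassing property of $\A$ — which is designed exactly to process such data — yields, back in $V$, some $A\in\A$ and a finite $F\subseteq\A\setminus\{A\}$ with $A\subseteq^*\dot Y\cup\bigcup F$. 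But then $\A\cup\{\dot Y\}$ is not ideal independent, contradicting the choice of $\dot Y$. Hence $\A$ remains maximal in every such $V[G]$, and is there a witness to $\mathfrak s_{mm}=\aleph_1$.

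The main obstacle is formulating Definition~\ref{encompassing} so that it simultaneously (i) is weak enough to be propagated through the $\om_1$-recursion against diagonalization reals — which entails surviving the Cohen reals supplied by Lemma~\ref{extension_lemma} and having bookkeeping fine enough to neutralise every potential future destroyer of maximality while still leaving room to re-establish the invariant at each stage — and (ii) is strong enough that the ``capture by a ground-model partition'' plus ``decide by $\U$'' reduction in the preservation step genuinely lands within its scope. Pinning down the right interplay between the complemented filters, the fixed $p$-point $\U$, and the ideal $\J(\A)$ so that one and the same property does both jobs is the crux; the forcing-theoretic ingredients ($\om^\om$-bounding, $p$-point preservation, properness) are then deployed in the by-now-standard manner of indestructible mad and selective independent family constructions.
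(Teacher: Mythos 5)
Your high-level architecture is the paper's: isolate the notion of a $\U$-encompassing family, build one under $\CH$ (Theorem~\ref{mainthm3}), and prove a separate preservation theorem (Theorem~\ref{preserving_encompassing}). But both halves of your sketch have genuine gaps. In the construction, you propose to produce the diagonalization reals via Lemma~\ref{extension_lemma}, i.e.\ via Cohen-plus-Mathias generics, and to ``absorb'' the Cohen reals while keeping $\U$ intact. That step would fail: after adding a Cohen real no ground-model ultrafilter generates an ultrafilter, so $\U$ cannot survive; Lemma~\ref{extension_lemma} is used in the paper only for the spectrum theorem. The actual $\CH$ construction is purely combinatorial: given $X_\alpha$ positive over $\A_\alpha$ and not in $\U$, one carves out of each $e_\alpha(n)$ a set $B_n$ disjoint from $X_\alpha$ and almost disjoint from the rest of $\A_\alpha$, uses the $p$-point property of $\U$ to find $A\in\U$ with $A\subseteq Y_\alpha$ almost disjoint from all the $C_n$, and then adjoins a \emph{pair} $A_0^\alpha, A_1^\alpha$, both containing $X_\alpha$, whose symmetric difference lies inside $Y_\alpha$. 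This pair trick is precisely what propagates clause (2) of Definition~\ref{encompassing} (every $X\in\U$ ends up in co-countably many complemented filters), and an additional bookkeeping clause arranges that every complemented filter of the final family is a $p$-point --- a property your sketch never secures, although it is an explicit hypothesis of the preservation theorem. You flag this interplay as ``the crux'' but leave it unresolved.

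In the preservation half, your reduction (capture $\dot Y$ by ground-model data, let $\U$ decide it, feed into encompassing) only handles the easy case where some $U\in\U$ is forced almost inside $\dot Y$; it says nothing usable in the hard case where $\dot Y$ is forced disjoint from some $Z\in\U$, which is exactly the situation one is left with after invoking $p$-point preservation. The paper's key step there is the claim that for every condition $u$ the outer hull $X_u=\{n\in\om: u\nVdash \check n\notin\dot Y\}$ lies in \emph{uncountably} many complemented filters: take a countable $M\prec H_\theta$ containing all $A$ with $X_u\in\F(\A,A)$, use that each $\F(\A,A_n)$ ($A_n\in\A\cap M$) is a $p$-point preserved by $\mathbb{P}$ to get dense sets of conditions forcing $\dot Y\cap A_n\setminus\bigcup_{i<k_n,\,i\neq n}A_i$ to be finite, and then use $\om^\om$-bounding and properness to bound the witnesses by ground-model functions, yielding $r\leq u$ with $X_r\notin\F(\A,A_n)$ for all $n$ while $X_r$ still lies in some complemented filter --- necessarily one outside $M$, a contradiction. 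Intersecting $X_q$ with $Z$ (which lies in co-countably many complemented filters) then produces a condition forcing $\dot Y\cap Z\neq\emptyset$, contradicting the choice of $Z$. None of this counting argument appears in your sketch, so as written the preservation step does not go through.
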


The above theorem applies to a large class of partial orders and implies that in many well-studied forcing extensions, $\mathfrak{s}_{mm}=\max\{\mathfrak{d}, \mathfrak{u}\}$. 

Finally, to each ideal independent family $\A$ we correspond a topological space $\psi(\A)$ which is a generalized version of the well-known Mr\'{o}wka space. Mr\'{o}wka spaces play important role in set-theoretic topology (see \cite{hrusak-separable-mad} and references therein). In this paper we characterize ideal independent families whose corresponding spaces are locally compact or pseudocompact (see Propositions~\ref{top1} and~\ref{top2}). Suprisingly this corresponds to the case of almost disjoint families, thus suggesting that Mr\'{o}wka spaces of non-almost disjoint families behave very differently to the classical theory. Specifically we show the following.

\begin{thm*}
    For a maximal ideal independent family $\A$ the following are equivalent.
    \begin{enumerate}
        \item $\psi(\A)$ is pseudocompact.
        \item $\A$ is  MAD family.
        \item $\A$ is a completely separable MAD family.
    \end{enumerate}
\end{thm*}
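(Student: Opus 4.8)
The plan is to prove the cycle of implications $(1)\Rightarrow(2)\Rightarrow(3)\Rightarrow(1)$, after recording two elementary facts about $\psi(\A)$. Recall that the underlying set of $\psi(\A)$ is $\omega\sqcup\A$, that $\omega$ is an open discrete subspace, and that the basic neighbourhoods of a point $A\in\A$ are the sets $\{A\}\cup(A\setminus Y)$ with $Y\in\J(\A)$; these are infinite, since $A\notin\J(\A)$ by ideal independence. First, every set $C\subseteq\omega$ with $C\in\J(\A)$ is a closed discrete subspace of $\psi(\A)$: it is discrete because its points are isolated in $\psi(\A)$, and it is closed because for each $A\in\A$ we have $C\cap A\in\J(\A)$, so that $\{A\}\cup(A\setminus(C\cap A))$ is a neighbourhood of $A$ disjoint from $C$, whence $A\notin\overline{C}$. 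Second, if $\psi(\A)$ has an infinite closed discrete subset $C\subseteq\omega$, then $\psi(\A)$ is not pseudocompact: enumerating $C=\{c_n:n\in\omega\}$, the function $f\colon\psi(\A)\to\mathbb{R}$ defined by $f(c_n)=n$ and $f\equiv 0$ on $\psi(\A)\setminus C$ is unbounded, and it is continuous because each point of $\omega\setminus C$ is isolated while each $A\in\A$, lying outside the closed set $C$, has a neighbourhood on which $f$ vanishes.

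For $(1)\Rightarrow(2)$, suppose that $\A$ is not almost disjoint and fix distinct $A,B\in\A$ with $C:=A\cap B$ infinite. By Theorem~\ref{char}, $\A$ is $\J(\A)$-almost disjoint, so $C\in\J(\A)$, and then the two facts above show that $\psi(\A)$ is not pseudocompact. Hence pseudocompactness of $\psi(\A)$ implies that $\A$ is almost disjoint; and an almost disjoint maximal ideal independent family is a MAD family, since if some $B\in[\omega]^\omega$ were almost disjoint from every member of $\A$ then $\A\cup\{B\}$ would be an almost disjoint, hence ideal independent, family properly extending $\A$, contradicting maximality.

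For $(2)\Rightarrow(3)$, if $\A$ is MAD then in particular it is almost disjoint, so all pairwise intersections of members of $\A$ are finite and therefore $\J(\A)=\mathrm{Fin}$; Theorem~\ref{char} then says that $\A$ is maximal $\mathrm{Fin}$-almost disjoint and $\mathrm{Fin}$-completely separable, which unwinds (for $\J(\A)=\mathrm{Fin}$) to the statement that $\A$ is a completely separable MAD family. For $(3)\Rightarrow(1)$, being MAD the family $\A$ is again almost disjoint, so $\J(\A)=\mathrm{Fin}$ and $\psi(\A)$ is the classical Mr\'owka space of the MAD family $\A$; this is pseudocompact by the usual argument. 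Indeed, since $\omega$ is dense in $\psi(\A)$, a continuous unbounded $f\colon\psi(\A)\to\mathbb{R}$ would already be unbounded on $\omega$, so there are distinct $n_k\in\omega$ with $|f(n_k)|\to\infty$; by maximality of $\A$ there is $A\in\A$ with $A\cap\{n_k:k\in\omega\}$ infinite, but this infinite subset of $A$ converges to $A$ in $\psi(\A)$, forcing $f(A)$ to be the limit of the corresponding values $f(n_k)$, which is absurd.

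The computations involved are routine, and the only substantial external input is Theorem~\ref{char}, used once to place the pairwise intersections of a maximal ideal independent family into $\J(\A)$ (so that the non-pseudocompactness argument can run) and once to promote a MAD maximal ideal independent family to a completely separable one. The step that most merits care is the first of the two preliminary facts, as it is exactly there that the definition of the topology of $\psi(\A)$ and the fact that $\J(\A)$ is an ideal are invoked; one should also confirm against Definition~\ref{new} that $\J(\A)$ collapses to $\mathrm{Fin}$ whenever $\A$ is almost disjoint and that $\mathrm{Fin}$-complete separability coincides there with ordinary complete separability. Beyond this bookkeeping I do not anticipate a genuine obstacle.
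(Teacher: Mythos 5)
Your proof is correct and follows essentially the same route as the paper: the equivalence of (1) and (2) via the observation that an infinite intersection $A\cap B\in\J(\A)$ is closed discrete in $\psi(\A)$ (the paper's Lemmas~\ref{cl} and~\ref{comp}), and the equivalence of (2) and (3) by specializing Theorem~\ref{char} to $\J(\A)=\mathsf{Fin}$ (the paper's Corollary~\ref{csmad}). The only difference is cosmetic: you supply direct arguments (an explicit unbounded continuous function, and the classical convergence argument for pseudocompactness of Mr\'owka spaces of MAD families) where the paper invokes a folklore lemma and a citation.
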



We conclude the paper with a brief discussion of remaining open questions.


\section{Preliminaries}
Recall that given $f, g\in \om$ we write $f \leq^* g$  ($f$ is {\em eventually dominated by} $g$), provided there is $n\in\omega$ such that for all $m\geq n$, $f(m)\leq g(m)$. The cardinal $\mathfrak{b}$, the {\em bounding number} is the least size of a family $A \subseteq \om^\om$ so that no single $f \in \om^\om$ eventually dominates every $g \in A$. Dually, the {\em dominating number}, $\mathfrak{d}$, is the least size of a {\em dominating family}, that is a family $D \subseteq\om^\om$ so that every $f \in \om^\om$ is eventually dominated by some $g \in D$. We denote by $[\omega]^\om$ the {\em Ramsey space}, that is the Polish space of infinite subsets of $\om$. Often it is convenient to quotient this space by the ideal $\mathsf{Fin}$ consisting of finite subsets of $\om$. For instance, if $A, B$ in $[\om]^\om$ then we write $A \subseteq^* B$, read ``$A$ is {\em almost contained in} $B$" if $A \setminus B$ is finite. Similarly we say that $A$ and $B$ are {\em almost equal}, denoted $A =^* B$, if their symmetric difference is finite, and we say that $A$ and $B$ are {\em almost disjoint}, denoted $A \cap B =^* \emptyset$, if their intersection is finite. 

Given a family $\A$ of subsets of $\w$ by $\A^+$ we denote the collection of all subsets of $\w$ which are not in the ideal generated by $\A$.
For an ideal $\J$ on $\omega$ which contains the ideal $\mathsf{Fin}$ and subsets $A,B$ of $\omega$ we write $A\subseteq^{\J}B$ if $A\setminus B\in \J$.   A family $\A\subseteq \J^+$ is called {\em $\J$-almost disjoint} if $A_1\cap A_2\in\J$ for any distinct $A_1,A_2\in\A$.  
Given a family $\B\subseteq[\omega]^{\omega}\setminus \J$, let 
$$\B^{+_{\J}}=[\omega]^{\omega}\setminus \{X\in[\w]^{\w}: X\subseteq^\J\bigcup \mathcal Y,\hbox{ where }\mathcal Y\in [\B]^{<\w}\}$$ and   $$\B^{++_\J}=\{X\in [\omega]^{\omega}:\exists \{B_i:i\in\omega\}\in[\B]^{\omega}\hbox{ such that } X\cap B_i\notin \J \quad\forall i\in\w\}.$$ 
Elements of the set $\B^{+_\J}$ are called {\em positive} with respect to the ideal $\J$. If $\J=\mathsf{Fin}$, then we write $\B^+$ ($\B^{++}$, resp.) instead of $\B^{+_\J}$ ($\B^{++_\J}$, resp.).

Given a family $\mathcal A \subseteq [\om]^\om$ we say that $\mathcal A$ has the {\em finite intersection property} if any finite subfamily has infinite intersection.
An infinite subset $B\subseteq \w$ is called a {\em pseudo-intersection} of a family $\A\subseteq [\w]^{\w}$ if $B\subseteq^* A$ for every $A\in\A$. 
The cardinal characteristic $\mathfrak{p}$ is the least size of a family with the finite intersection property with no pseudo-intersection. An ultrafilter is said to be {\em principal} if it contains a singleton and {\em non-principal} otherwise. Unless otherwise stated we will assume all ultrafilters to be non-principal. If $\mathscr{U}$ is an ultrafilter then a {\em base} for $\mathscr{U}$ is a subset $\mathcal B \subseteq \mathscr{U}$ so that every element $A \in \mathscr{U}$ almost contains some $B \in \mathcal{B}$. In this case, we say that $\mathcal B$ {\em generates} $\mathscr{U}$, sometimes denoted $\langle \mathcal B \rangle$. The {\em ultrafilter number} $\mathfrak{u}$, is the least size of a base of a non-principal ultrafilter. 
For an ultrafilter  $\mathscr{U}$ we say that
\begin{enumerate}
    \item $\mathscr{U}$ is a {\em $p$-point} if every countable subfamily of $\mathscr{U}$ has a pseudo-intersection in $\mathscr{U}$.
    \item $\mathscr{U}$ is a {\em q-point} if every partition of $\om$ into finite sets $\{I_n\}_{n < \om}$ there is a $U \in \mathscr{U}$ so that $|U \cap I_n| = 1$ for each $n < \om$.
    \item $\mathscr{U}$ is {\em Ramsey}, or, {\em selective} if it is a $p$-point and a q-point.
    \item $\mathscr{U}$ is a $p_{\mathfrak{c}}$-point if any  
 $\mathscr{F}\subseteq\mathscr{U}$, $|\mathscr{F}|<\mathfrak{c}$ has a pseudo-intersection in $\mathscr{U}$.
\end{enumerate}

Finally, a family $\mathcal I \subseteq [\om]^\om$ is said to be {\em independent} if whenever $\mathcal A, \mathcal B$ are finite, disjoint, non-empty subfamilies of $\mathcal I$, the set $\bigcap A \setminus \bigcup \mathcal B$ is infinite. While this notion technically makes sense for finite families it is somewhat degenerate in this case thus we will implicitly mean by an independent family an {\em infinite} independent family. The least size of a maximal (infinite) independent family is denoted $\mathfrak{i}$.


\section{Filters, Almost Disjointness and Ideal Independence}

The following definition plays a central role in this section.

 \begin{dfn}\label{new}
Let $\A$ be an ideal independent family and $A\in\A$. 
\begin{enumerate}
    \item By $\F(\A,A)$ we denote the filter generated by the family $$\{A\setminus\bigcup \mathcal B: A\notin\mathcal B\in [\mathcal{A}]^{<\omega}\}.$$
    \item By $\J(\A,A)$ we denote the ideal generated by the family $$\{A\cap B: B\in \A\setminus\{A\}\}\cup\mathsf{Fin}.$$
    \item By $\J(\A)$ we denote the ideal generated by the family $\bigcup_{A\in\A}\J(\A,A)$.
\end{enumerate} 
\end{dfn}  

We refer to the filters of the form $\mathcal{F}(\mathcal{A}, A)$ as the {\em complemented filters of} $\mathcal{A}$, while for a fixed $A \in \mathcal{A}$ we say that  $\mathcal{F}(\mathcal{A}, A)$ is the {\em complemented filter (of} $\mathcal{A}$) {\em corresponding to} $A$. Observe that for any $A\in\mathcal A$ a subset $B\subseteq A$ belongs to the ideal $\mathcal{J}(\A,A)$ if and only if $A\setminus B\in \F(\A,A)$.  

Note that an ideal independent family $\mathcal{A}$ is maximal if and only if every $X \in [\omega]^\omega$ is either in the ideal generated by $\mathcal{A}$ or belongs to at least one of the filters $\mathcal{F}(\mathcal{A}, A)$ (these two possibilities are not mutually exclusive). The name ``complemented" comes from this observation: under maximality every element of the complement of the ideal generated by $\mathcal{A}$ is in some complemented filter.

The proof of the following lemma is straightforward.

\begin{lemma}\label{easy}
For a family $\A\subseteq [\w]^{\w}$ the following statements hold:
\begin{enumerate}
    \item if for some ideal $\J$ on $\w$ the family $\A$ is $\J$-almost disjoint, then $\A$ is ideal independent; 
    \item if the family $\A$ is ideal independent, then $\A$ is $\J(\A)$-almost disjoint.
\end{enumerate}
\end{lemma}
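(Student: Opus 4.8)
The plan is to verify the two implications separately, each by unwinding the relevant definitions.

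For (1), suppose $\J$ is an ideal on $\w$ and $\A$ is $\J$-almost disjoint. To show $\A$ is ideal independent, fix $A\in\A$ and a finite $\mathcal B\subseteq\A\setminus\{A\}$, say $\mathcal B=\{B_1,\dots,B_n\}$; I must check that $A\setminus\bigcup_{i\le n}B_i$ is infinite, equivalently (since this is the negation of $A\in\langle\A\setminus\{A\}\rangle$, and using $\subseteq^*$ it suffices to show it is not almost contained in the union) that $A\not\subseteq^*\bigcup_{i\le n}B_i$. By $\J$-almost disjointness, $A\cap B_i\in\J$ for each $i$, so $A\cap\bigcup_{i\le n}B_i=\bigcup_{i\le n}(A\cap B_i)\in\J$. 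Since $\A\subseteq\J^+$, we have $A\notin\J$, and therefore $A\setminus\bigcup_{i\le n}B_i=A\setminus\big(A\cap\bigcup_{i\le n}B_i\big)\notin\J$; in particular it is infinite (as $\mathsf{Fin}\subseteq\J$), which is exactly what ideal independence of $\A$ requires. Note this shows slightly more: the complement is $\J$-positive, not merely infinite.

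For (2), suppose $\A$ is ideal independent; I want $\A$ to be $\J(\A)$-almost disjoint, i.e. $A_1\cap A_2\in\J(\A)$ for distinct $A_1,A_2\in\A$. But this is immediate from the definition of $\J(\A)$: for $A_1\ne A_2$ in $\A$ we have $A_1\cap A_2\in\J(\A,A_1)$ by Definition~\ref{new}(2) (it is one of the generators $A_1\cap B$ with $B=A_2\in\A\setminus\{A_1\}$), and $\J(\A,A_1)\subseteq\J(\A)$ by Definition~\ref{new}(3). Strictly, one should also confirm that $\J(\A)$ is a proper ideal and that $\A\subseteq\J(\A)^+$, so that the phrase ``$\J(\A)$-almost disjoint'' is not vacuous; this follows because ideal independence says precisely that no $A\in\A$ lies in the ideal generated by $\A\setminus\{A\}$, and every element of $\J(\A)$ is, up to a finite set, covered by finitely many sets of the form $A'\cap B$ with $A'\in\A$, $B\in\A\setminus\{A'\}$ — for a fixed $A\in\A$ all such pieces meeting a restriction to $A$ come from sets $A\cap B$ with $B\ne A$, whose union is (mod finite) contained in $\bigcup$ of finitely many members of $\A\setminus\{A\}$, hence does not cover $A$.

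There is essentially no obstacle here; the only mild subtlety is bookkeeping in part (2) to see that $\J(\A)$ restricted to a fixed $A\in\A$ cannot swallow $A$, which one reduces to the ideal-independence hypothesis as above. Everything else is a direct unwinding of Definition~\ref{new}, which is why the lemma is flagged as straightforward.
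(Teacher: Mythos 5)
Your proof is correct and is exactly the direct unwinding of Definition~\ref{new} that the paper has in mind when it declares the lemma straightforward and omits the argument. One phrase in your part (2) is slightly off --- a generator $A'\cap B$ with $A'$ and $B$ both different from $A$ can still meet $A$ --- but the argument survives because every generator is contained in some member of $\A\setminus\{A\}$ (namely $A'$ if $A'\neq A$, and $B$ otherwise), which is all your covering step actually uses.
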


\begin{definition*}
A $\J$-almost disjoint family $\A$ is called {\em $\J$-completely separable} if for any $B\in \A^{++_{\J}}$ there exists $A\in\A$ such that $A\subseteq^{\J} B$. 
\end{definition*}

One can easily check that $\A^{+_\J}=\A^{++_\J}$ for any maximal $\J$-almost disjoint family $\A$. 
In case $\J=\mathsf{Fin}$, we refer to $\J$-completely separable family simply as completely separable.
The existence of a completely separable maximal almost disjoint family in ZFC is still an open problem (see~\cite{HS} for the history thereof). Completely separable maximal almost disjoint families are intensively studied in the literature (see the survey~\cite{G} and references therein). $\J$-almost disjoint families were recently investigated in~\cite{RS}. The following theorem establishes a connection between $\J$-completely separable $\J$-almost disjoint families and maximal ideal independent families.

\begin{thm}\label{char}
An ideal independent family $\A$ is maximal if and only if $\A$ is maximal $\J(\A)$-almost disjoint and $\J(\A)$-completely separable. 
\end{thm}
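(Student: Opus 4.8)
The plan is to prove both directions of the equivalence, using the observation recorded just before the theorem that $\A$ is maximal ideal independent iff every $X\in[\w]^\w$ is either in the ideal generated by $\A$ or lies in some complemented filter $\F(\A,A)$, together with Lemma~\ref{easy}(2), which already tells us that any ideal independent $\A$ is $\J(\A)$-almost disjoint.

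First I would do the harder direction, assuming $\A$ is maximal ideal independent and deriving both that $\A$ is maximal $\J(\A)$-almost disjoint and that it is $\J(\A)$-completely separable. For maximal $\J(\A)$-almost disjointness: suppose $X\in \J(\A)^+$ witnesses non-maximality, i.e. $X\cap A\in\J(\A)$ for all $A\in\A$ but $X\notin\J(\A)$; I would aim to show $\A\cup\{X\}$ is still ideal independent, contradicting maximality of $\A$. The two things to check are that $X\setminus\bigcup_{i<n}A_i$ is infinite for any finite $\{A_i\}\subseteq\A$ — this follows since $X\notin\J(\A)$ while $\bigcup_i (X\cap A_i)\in\J(\A)$ — and that no $A\in\A$ lies in the ideal generated by $(\A\setminus\{A\})\cup\{X\}$, i.e. $A\setminus(\bigcup_{i<n}A_i\cup X)$ is infinite; here one uses that $A\cap X\in\J(\A,A)$ (as $X$ is $\J(\A)$-almost disjoint from $A$, and one must unwind that $X\cap A\in\J(\A)$ implies $X\cap A\in\J(\A,A)$ — this needs a small argument about how finitely generated pieces of $\J(\A)$ restrict to a single $A$, using ideal independence of $\A$), so that $A\setminus(\bigcup_i A_i\cup X)=^{\J(\A,A)} A\setminus\bigcup_i A_i$, which is infinite and in fact $\J(\A,A)$-positive by ideal independence. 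For $\J(\A)$-complete separability: given $B\in\A^{++_{\J(\A)}}$, pick $\{A_i:i\in\w\}\subseteq\A$ with $B\cap A_i\notin\J(\A)$ for all $i$. Then $B\notin\J(\A)$ (else $B\cap A_0\in\J(\A)$), so by maximality of $\A$ as ideal independent family, either $B$ is in the ideal generated by $\A$ — but then $B\subseteq^*\bigcup_{i<n}C_i$ for some $C_i\in\A$, and I would argue using $B\cap A_i\notin\J(\A)$ for infinitely many $i$ that some $C_i$ must satisfy $C_i\subseteq^{\J(\A)}B$ (a pigeonhole-style argument: $B\cap A_j\notin\J(\A)$ forces $A_j$ to meet some $C_i$ on a $\J(\A)$-positive set, and with enough indices $j$ one $C_i$ recurs; then analyze $C_i\cap B$ versus $C_i$) — or $B\in\F(\A,A)$ for some $A\in\A$, meaning $A\setminus\bigcup\mathcal C\subseteq^* B$ for every finite $\mathcal C\subseteq\A\setminus\{A\}$, from which $A\setminus B\in\J(\A,A)\subseteq\J(\A)$, i.e. $A\subseteq^{\J(\A)}B$, as desired.

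For the converse, assume $\A$ is maximal $\J(\A)$-almost disjoint and $\J(\A)$-completely separable; I would show $\A$ is maximal ideal independent. Since $\A$ is $\J(\A)$-almost disjoint it is ideal independent by Lemma~\ref{easy}(1). Take any $X\in[\w]^\w$ not in the ideal generated by $\A$; I must produce $A\in\A$ with $X\in\F(\A,A)$, equivalently $A\setminus X\in\J(\A,A)$, equivalently $A\subseteq^{\J(\A,A)} X$. I would split into cases according to whether $X\in\A^{++_{\J(\A)}}$ or not. If $X\in\A^{++_{\J(\A)}}$, complete separability gives $A\in\A$ with $A\subseteq^{\J(\A)}X$; the remaining point is to upgrade $\subseteq^{\J(\A)}$ to $\subseteq^{\J(\A,A)}$, i.e. to see that $A\setminus X\in\J(\A)$ actually lands in $\J(\A,A)$, again via the restriction lemma that $\J(\A)\restriction A$ and $\J(\A,A)$ agree on subsets of $A$ (a consequence of ideal independence). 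If $X\notin\A^{++_{\J(\A)}}$, then $X$ meets only finitely many members of $\A$ $\J(\A)$-positively, say $A_0,\dots,A_{n-1}$; then $X\setminus\bigcup_{i<n}A_i$ is $\J(\A)$-almost disjoint from every member of $\A$, so by maximality of $\A$ as a $\J(\A)$-almost disjoint family $X\setminus\bigcup_{i<n}A_i\in\J(\A)$, whence $X\subseteq^{\J(\A)}\bigcup_{i<n}A_i$ and then (since $\J(\A)$ is generated by the $\J(\A,A_i)$'s and $\mathsf{Fin}$, together with finitely many cross terms $A\cap B$) one deduces $X$ is in the ideal generated by $\A$ after all — contradiction, so this case cannot occur.

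The main obstacle I anticipate is the recurring "restriction lemma": that for an ideal independent $\A$ and $A\in\A$, a subset $Y\subseteq A$ lies in $\J(\A)$ if and only if it lies in $\J(\A,A)$, i.e. $\J(\A)\restriction A=\J(\A,A)$. One inclusion is trivial; for the other, an arbitrary element of $\J(\A)$ has the form $F\cup\bigcup_{j<k}(A_j\cap B_j)$ with $A_j\in\A$, $B_j\in\A\setminus\{A_j\}$, $F$ finite, and intersecting with $A$ one must show $A\cap\bigcup_j(A_j\cap B_j)\in\J(\A,A)$; the terms with $A_j=A$ or $B_j=A$ are directly in $\J(\A,A)$, and for the terms with $A\notin\{A_j,B_j\}$ one needs $A\cap A_j\cap B_j\in\J(\A,A)$, which should follow from ideal independence (these sets are "small" relative to $A$). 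Getting this bookkeeping exactly right — and the analogous pigeonhole argument inside the complete-separability step where one must locate a single $C_i$ with $C_i\subseteq^{\J(\A)}B$ — is where the real work lies; the rest is a matter of carefully translating between the filter language ($\F(\A,A)$), the ideal language ($\J(\A,A)$, $\J(\A)$), and the maximality-of-ideal-independent-family language via the stated observation.
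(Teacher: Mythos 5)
Your proof is correct and follows essentially the same route as the paper's: extend $\A$ by any counterexample to maximal $\J(\A)$-almost-disjointness, read off complete separability from the dichotomy ``$X$ is in the ideal generated by $\A$ or $X\in\F(\A,A)$ for some $A$'', and for the converse show that every set positive over the ideal generated by $\A$ is $\J(\A)$-positive over $\A$ and then apply complete separability. Three minor remarks: your ``restriction lemma'' is true but trivial (for a generator $A_j\cap B_j$ of $\J(\A)$ with $A\notin\{A_j,B_j\}$ one has $A\cap A_j\cap B_j\subseteq A\cap A_j\in\J(\A,A)$, so no appeal to ideal independence is needed); in the complete-separability step the case ``$B$ is in the ideal generated by $\A$'' is vacuous because $\A^{++_{\J(\A)}}\subseteq\A^+$ (if $B\subseteq^*\bigcup_{i<n}C_i$ then any $A_j$ with $B\cap A_j\notin\J(\A)$ must equal one of the finitely many $C_i$, since distinct members of $\A$ intersect in $\J(\A)$), so your pigeonhole argument closes that case off rather than producing a $C_i\subseteq^{\J(\A)}B$; and $B\in\F(\A,A)$ means $A\setminus\bigcup\mathcal C\subseteq^* B$ for \emph{some} finite $\mathcal C\subseteq\A\setminus\{A\}$, not for every such $\mathcal C$, which is in fact all your subsequent deduction uses.
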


\begin{proof}
Let $\A$ be a maximal ideal independent family. By Lemma~\ref{easy}, the family $\A$ is $\J(\A)$-almost disjoint. 
To derive a contradiction assume that there exists $B\in \J(\A)^+\setminus\A$ such that $B\cap A\in \J(\A)$ for any $A\in \A$. Then $\A\cup\{B\}$ is ideal independent, which contradicts the maximality of $\A$. Thus, the family $\A$ is maximal $\J(\A)$-almost disjoint.
To show that the family $\A$ is $\J(\A)$-completely separable fix any $C\in \A^{++_{\J(\A)}}$. Note that $C\in \A^{+}\setminus \A$. By the maximality of $\A$ there exist $A\in\A$ and $\{A_i:i\in n\}\in[\A\setminus\{A\}]^{<\w}$ such that $A\subseteq^* \bigcup_{i\in n}A_i\cup C$. Hence 
$$A=A\cap A\subseteq^* A\cap(\bigcup_{i\in n}A_i\cup C)\subseteq\big(\bigcup_{i\in n}(A\cap A_i)\big)\cup C.$$
Then $A\subseteq^{\J(\A)}C$, as $\bigcup_{i\in n}(A\cap A_i)\in\J(\A,A)\subseteq \J(\A)$. It follows that the family $\A$ is $\J(\A)$-completely separable.

Assume that the family $\A$ is  maximal $\J(\A)$-almost disjoint and $\J(\A)$-completely separable. By Lemma~\ref{easy}, the family $\A$ is ideal independent. Fix any $C\in \A^+$. Let us show that $C\in \A^{+_{\J(\A)}}$. Assuming the contrary we can find $\{A_i: i\in n\}\in[\A]^{<\w}$ and $D\in \J(\A)$ such that $C\subseteq \bigcup_{i\in n}A_i\cup D$. Then there exists $\{D_i:i\in m\}\in [\A]^{<\w}$ such that $D\subseteq^* \bigcup_{i\in m}D_i$. But then $C\subseteq^* \bigcup_{i\in n}A_i\cup(\bigcup_{i\in m}D_i)$, witnessing that $C\notin \A^+$. The obtained contradiction implies that $C\in \A^{+_{\J(\A)}}$.
Taking into account that $\A^{+_{\J(\A)}}=\A^{++_{\J(\A)}}$, the $\J(\A)$-complete separability implies the existence of $A\in\A$ such that $A\subseteq^{\J(\A)}C$. By the definition of $\J(\A)$, the set $A\setminus C$ belongs to $\J(\A)$ and thus it can be covered mod finite by the union of finitely many elements $E_1,\ldots, E_m\in \A\setminus\{A\}$, witnessing that $C\in\F(\A,A)$. Hence the family $\A$ is maximal ideal independent. 
\end{proof}

The following corollary of Theorem~\ref{char} describes almost disjoint families which are maximal ideal independent.

\begin{crl}\label{csmad}
An almost disjoint family $\A$ is maximal ideal independent if and only if $\A$ is a completely separable maximal almost disjoint family.
\end{crl}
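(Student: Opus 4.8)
The plan is to obtain this as the special case $\J(\A)=\mathsf{Fin}$ of Theorem~\ref{char}. So the only real content is to check that $\J(\A)$ collapses to $\mathsf{Fin}$ for an almost disjoint family and that the $\mathsf{Fin}$-relativized notions appearing in Theorem~\ref{char} are the familiar ones.

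First I would note that if $\A$ is almost disjoint then for every $A\in\A$ each generator $A\cap B$ of $\J(\A,A)$ with $B\in\A\setminus\{A\}$ is finite, so $\J(\A,A)=\mathsf{Fin}$; consequently $\J(\A)$, being generated by $\bigcup_{A\in\A}\J(\A,A)$, equals $\mathsf{Fin}$ as well. By Lemma~\ref{easy}(1) applied with $\J=\mathsf{Fin}$, $\A$ is moreover ideal independent, so Theorem~\ref{char} is applicable to it. Next I would record the (purely definitional) dictionary: a $\mathsf{Fin}$-almost disjoint family is just an almost disjoint family, a maximal $\mathsf{Fin}$-almost disjoint family is just a MAD family, $\A^{+_{\mathsf{Fin}}}=\A^+$ and $\A^{++_{\mathsf{Fin}}}=\A^{++}$ by the conventions fixed in Section~2, $A\subseteq^{\mathsf{Fin}}B$ means $A\subseteq^* B$, and hence a $\mathsf{Fin}$-completely separable family is exactly a completely separable family.

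With these identifications, Theorem~\ref{char} instantiated at $\J(\A)=\mathsf{Fin}$ states precisely that an almost disjoint family $\A$ is maximal ideal independent if and only if it is a completely separable MAD family; for the ``if'' direction one only needs to observe that a completely separable MAD family is in particular almost disjoint, so the equivalence applies to it verbatim. I do not anticipate any genuine obstacle: the argument is a routine unwinding of definitions once the equality $\J(\A)=\mathsf{Fin}$ is in hand, the only point requiring a line of care being that the $\mathsf{Fin}$-versions of ``almost disjoint'', ``maximal almost disjoint'', and ``completely separable'' coincide with the classical notions.
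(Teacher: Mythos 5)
Your proposal is correct and is exactly the intended argument: the paper offers no separate proof, presenting the statement as an immediate consequence of Theorem~\ref{char}, which for an almost disjoint family reduces to the classical notions precisely because every generator $A\cap B$ of $\J(\A,A)$ is finite and hence $\J(\A)=\mathsf{Fin}$. Your dictionary of the $\mathsf{Fin}$-relativized notions (including the paper's explicit convention that $\mathsf{Fin}$-completely separable means completely separable) is the right and complete set of checks.
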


Note that, independent families are never maximal ideal independent. Indeed, for every element $A$ of  an independent family $\A$ the family $\A\cup\{\w\setminus A\}$ is ideal independent.

For a filter $\F$ let $\chi(\F)=\min\{|\mathcal B|:\mathcal B$ is a base of $\F\}$.

\begin{thm}\label{sol}
Let $\A=\{A_{\alpha}:\alpha\in\kappa\}$ be a (maximal) ideal independent family and for any $\alpha\in\kappa$, $\mathcal G_\alpha$ be a filter on $\omega$ such that $\F(\A, A_{\alpha})\subseteq \mathcal G_{\alpha}$ and $\chi(\mathcal G_{\alpha})\leq \kappa$. Then there exists a (maximal) ideal independent family $\A'=\{A'_{\alpha}:\alpha\in\kappa\}$ such that $\F(\A',A'_{\alpha})=\mathcal G_{\alpha}$ for all $\alpha\in\kappa$.
\end{thm}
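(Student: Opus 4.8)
The plan is to obtain $\A'$ not by thinning the $A_\alpha$'s but by \emph{enlarging} them, adding carefully chosen extra points so that the complemented filter of $A'_\alpha$ inside $\A'$ grows from $\F(\A,A_\alpha)$ up to exactly $\mathcal G_\alpha$. (Shrinking cannot work: for an almost disjoint $\A$ it would force every complemented filter to become principal.) The observation that makes enlarging harmless is that the hypothesis already localizes $\mathcal G_\alpha$ around $A_\alpha$. Writing $\I^*_\alpha:=\{X\subseteq\omega:\omega\setminus X\in\mathcal G_\alpha\}$ for the dual ideal, we have $A_\alpha\in\mathcal G_\alpha$ (the $\mathcal B=\emptyset$ generator of $\F(\A,A_\alpha)$) while $A_\beta\in\I^*_\alpha$ for every $\beta\ne\alpha$: indeed $A_\alpha\setminus A_\beta$ is a generator of $\F(\A,A_\alpha)\subseteq\mathcal G_\alpha$, so $A_\alpha\cap A_\beta\in\I^*_\alpha$, and $A_\beta\setminus A_\alpha\subseteq\omega\setminus A_\alpha\in\I^*_\alpha$.

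For each $\alpha$ let $\I_\alpha:=\I^*_\alpha\restriction A_\alpha=\{Y\subseteq A_\alpha:A_\alpha\setminus Y\in\mathcal G_\alpha\}$; by the previous paragraph this is an ideal on $A_\alpha$ extending $\J(\A,A_\alpha)$, and since $\chi(\mathcal G_\alpha)\le\kappa$ it has a cofinal subset of size $\le\kappa$ (complement a filter base of $\mathcal G_\alpha$ and intersect with $A_\alpha$). I would then fix a partition $\kappa=\bigsqcup_{v<\kappa}S_v$ into pieces each of size $\kappa$, with $v\notin S_v$; write $\rho(u)$ for the unique $v$ with $u\in S_v$; re-index (with repetitions if needed) a cofinal subset of $\I_v$ as $\{C^v_u:u\in S_v\}$; and set
\[
A'_u:=A_u\cup C^{\rho(u)}_u,\qquad \A':=\{A'_u:u<\kappa\}.
\]
Thus $S_v$ is a reservoir of ``carriers'' that transport a base of $\I_v$ into the trace on $A_v$ (for $u\in S_v$ the set $A'_u\cap A_v$ contains $C^v_u$), while each carrier $u$ draws its extra points from the single donor $A_{\rho(u)}$.

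The verification rests on the two membership facts above together with the decomposition of $A'_u\cap A'_\beta$ into $A_u\cap A_\beta$, $A_u\cap C^{\rho(\beta)}_\beta$, $C^{\rho(u)}_u\cap A_\beta$, $C^{\rho(u)}_u\cap C^{\rho(\beta)}_\beta$. \emph{(i)} $\A'$ is ideal independent (in particular the $A'_u$ are pairwise distinct): for finite $F\not\ni u$ one has $A'_u\setminus\bigcup_{\beta\in F}A'_\beta\supseteq A_u\setminus\bigcup_{\beta\in F}(A_u\cap A'_\beta)$, and each $A_u\cap A'_\beta$ lies in $\I_u$ (it equals $(A_u\cap A_\beta)\cup C^u_\beta$ when $\rho(\beta)=u$, and is contained in $(A_u\cap A_\beta)\cup(A_u\cap A_{\rho(\beta)})$ otherwise), so the difference almost contains a member of $\mathcal G_u$ and is infinite. \emph{(ii)} $\F(\A',A'_u)\subseteq\mathcal G_u$: since $C^{\rho(u)}_u\subseteq A_{\rho(u)}\in\I^*_u$ and $A_u\cap A_\beta\in\I_u\subseteq\I^*_u$, each of the four pieces of $A'_u\cap A'_\beta$ lies in $\I^*_u$; as $A'_u\in\mathcal G_u$, every generator $A'_u\setminus\bigcup_{\beta\in F}A'_\beta$ of $\F(\A',A'_u)$ is then in $\mathcal G_u$. \emph{(iii)} $\F(\A',A'_u)\supseteq\mathcal G_u$: for $G\in\mathcal G_u$ the set $A_u\setminus G$ lies in $\I_u$, hence is covered mod finite by finitely many $C^u_\beta$ with $\beta\in S_u$, and $C^u_\beta\subseteq A'_\beta$ because $\rho(\beta)=u$; moreover $A'_u\setminus(A_u\cup G)\subseteq C^{\rho(u)}_u\subseteq A'_{\rho(u)}$; so $A'_u\setminus G$ is covered mod finite by finitely many members of $\A'\setminus\{A'_u\}$, i.e. $G$ almost contains a generator of $\F(\A',A'_u)$. \emph{(iv)} If $\A$ is maximal, so is $\A'$: given $X\in[\omega]^\omega\setminus\langle\A'\rangle$ we get $X\notin\langle\A\rangle$ (as $A_\alpha\subseteq A'_\alpha$), so by the criterion recorded after Definition~\ref{new}, $X\in\F(\A,A_\alpha)$ for some $\alpha$, whence $A_\alpha\setminus X\in\J(\A,A_\alpha)\subseteq\I^*_\alpha$; together with $\omega\setminus A_\alpha\in\I^*_\alpha$ this yields $\omega\setminus X\in\I^*_\alpha$, i.e. $X\in\mathcal G_\alpha=\F(\A',A'_\alpha)$.

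The one non-routine point, where I expect to spend most care, is the bookkeeping behind the partition $\{S_v\}$. When $\kappa>\omega$ there is no room for $\kappa$ pairwise disjoint infinite ``regions'' in $\omega$, so the separation of the enlargements must be organized at the level of indices; routing each carrier $u$ to a \emph{single} donor $\rho(u)$ is precisely what keeps every intersection $A'_u\cap A'_\beta$ under the control of $\I^*_u$, via the lone inclusion $C^{\rho(u)}_u\subseteq A_{\rho(u)}\in\I^*_u$. Everything else is mod-finite bookkeeping resting on the two membership facts in the first paragraph.
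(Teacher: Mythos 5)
Your proposal is correct and is essentially the paper's own construction: in both cases one sets $A'_\alpha = A_\alpha\cup\bigl(A_{\sigma(\alpha)}\setminus D\bigr)$ for a base element $D$ of $\mathcal G_{\sigma(\alpha)}$ contained in the donor $A_{\sigma(\alpha)}$, which is exactly your $A_u\cup C^{\rho(u)}_u$, and the verifications of ideal independence, of both inclusions for $\F(\A',A'_\alpha)=\mathcal G_\alpha$, and of the transfer of maximality proceed the same way. The only difference is cosmetic bookkeeping (your partition $\{S_v\}$ of $\kappa$ versus the paper's enumeration of $\bigcup_\alpha\C_\alpha$ with $D_\alpha\notin\C_\alpha$), together with your slightly more systematic use of the dual ideals $\I^*_\alpha$.
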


\begin{proof}
Fix an ideal independent family $\A=\{A_{\alpha}:\alpha\in\kappa\}$ and filters  $\mathcal G_{\alpha}$, $\alpha\in\kappa$ which satisfy the conditions above.   
For each $\alpha\in\kappa$ let $\C_{\alpha}$ be a base of $\mathcal G_\alpha$ of cardinality $\leq \kappa$. Moreover assume that for every $\alpha\in\kappa$ and $C\in\C_{\alpha}$ we have that $C\subseteq A_{\alpha}$. 
The latter assumption implies that $\C_{\alpha}\cap \C_{\beta}=\emptyset$ for every $\alpha\neq \beta$.
Enumerate the set  $\mathcal D=\bigcup_{\alpha\in\kappa}\C_{\alpha}$ as $\{D_\alpha:\alpha\in\kappa\}$, additionally assuming that $D_{\alpha}\notin \C_{\alpha}$ for every $\alpha\in\kappa$.
For each $\alpha\in\kappa$ let $A'_\alpha=A_\alpha\cup (A_{\sigma(\alpha)}\setminus D_\alpha)$, where $D_{\alpha}\in\C_{\sigma(\alpha)}$. Since for every $\alpha\neq \beta$ the set $\C_{\alpha}\cap \C_{\beta}$ is empty,  the ordinal $\sigma(\alpha)$ is unique and, thus, well-defined. Let us show that the family $\A'=\{A_{\alpha}':\alpha\in\kappa\}$ is ideal independent. 
Fix any $\alpha\in\kappa$ and $\{\beta_i:i\in n\}\in [\kappa\setminus\{\alpha\}]^{<\omega}$. We need to show that $A_{\alpha}'\setminus\bigcup_{i\in n}A_{\beta_i}'\neq^* \emptyset$. There are two cases:
\begin{enumerate}
    \item $\alpha\notin \{\sigma(\beta_i):i\in n\}$;
    \item $\alpha\in \{\sigma(\beta_i):i\in n\}$. 
\end{enumerate}
    
{\emph{Case 1)}}
Observe that 
$A_{\alpha}\subseteq A_{\alpha}'$ and $$\bigcup_{i\in n}A_{\beta_i}'=\bigcup_{i\in n}(A_{\beta_i}\cup(A_{\sigma(\beta_i)}\setminus D_{\beta_i}))\subseteq \bigcup_{i\in n}(A_{\beta_i}\cup A_{\sigma(\beta_i)}).$$
Therefore, $A_{\alpha}'\setminus \bigcup_{i\in n}A_{\beta_i}'\subseteq A_{\alpha}\setminus \bigcup_{i\in n}(A_{\beta_i}\cup A_{\sigma(\beta_i)})\neq^*\emptyset$, as $\alpha$ does not belong to 
$\{\beta_i:i\in n\}\cup\{\sigma(\beta_i):i\in n\}$ and the family $\A$ is ideal independent.

{\emph{Case 2)}} Let $\Gamma=\{i\in n: \alpha=\sigma(\beta_i)\}$. Since $A_{\alpha}\cap\bigcap_{i\in \Gamma}D_{\beta_i}\in \mathcal G_{\alpha}$ we obtain:  
\begin{align*}
& A_{\alpha}'\setminus \bigcup_{i\in n}(A_{\beta_i}\cup(A_{\sigma(\beta_i)}\setminus  D_{\beta_i})) \supseteq \\ &  \big(A_{\alpha}\setminus (\bigcup_{i\in n} A_{\beta_i})\big) \cap(A_{\alpha}\cap\bigcap_{i\in \Gamma}D_{\beta_i})\cap (A_{\alpha}\setminus (\bigcup_{i\notin \Gamma} A_{\sigma(\beta_i)}))\in \mathcal G_{\alpha}.
\end{align*}
Hence $A_{\alpha}'\setminus\bigcup_{i\in n}A_{\beta_i}'\neq^* \emptyset$, witnessing that the family $\A_{\alpha}'$ is ideal independent.

Fix any $\alpha\in\kappa$ and $C\in\C_\alpha$. There is $\xi\in\kappa\setminus\{\alpha\}$ such that $C=D_\xi\subseteq A_{\xi}$. Observe that $\sigma(\xi)=\alpha$. Then  
\begin{align*}
A_{\alpha}'\setminus (A'_{\sigma(\alpha)}\cup A'_{\xi}) & \subseteq(A_{\alpha}\cup(A_{\sigma(\alpha)}\setminus D_{\alpha}))\setminus \big(A_{\sigma(\alpha)}\cup (A_{\sigma(\xi)}\setminus D_\xi)\big) \\
& \subseteq A_{\alpha}\setminus (A_{\alpha}\setminus D_\xi)= C,
\end{align*}
witnessing that $\mathcal G_{\alpha}\subseteq\F(\A',A'_{\alpha})$. To show the converse inclusion fix any $F\in\F(\A',A'_{\alpha})$. Then there exists
$\{\alpha_i:i\in n\}\in[\kappa\setminus\{\alpha\}]^{<\w}$ such that
$$A_{\alpha}'\setminus \bigcup_{i\in n}A_{\alpha_i}'=\big(A_{\alpha}\cup(A_{\sigma(\alpha)}\setminus D_{\alpha})\big)\setminus\bigcup_{i\in n}\big(A_{\alpha_i}\cup(A_{\sigma(\alpha_i)}\setminus D_{\alpha_i})\big)\subseteq^* F.$$
It is straightforward to check that if $\alpha$ is  not in $\{\sigma(\alpha_i):i\in n\}$, then $$A_{\alpha}'\setminus \bigcup_{i\in n}A_{\alpha_i}'\in \F(\A,A_{\alpha})\subseteq \mathcal G_{\alpha},$$ implying that $F\in \mathcal G_{\alpha}$. Assume that the set $\Gamma=\{i\in n: \sigma(\alpha_i)=\alpha\}$ is not empty. Then $$F\supseteq^* A_{\alpha}'\setminus \bigcup_{i\in n}A_{\alpha_i}'\supseteq \big(A_{\alpha}\setminus \bigcup_{i\in n}A_{\alpha_i}\big)\cap (A_{\alpha}\cap \bigcap_{i\in \Gamma} D_{\alpha_i})\cap  (A_{\alpha}\setminus \bigcup_{i\notin \Gamma}A_{\sigma(\alpha_i)}     \big)\in \mathcal G_{\alpha}.$$
Hence $\mathcal G_{\alpha}=\F(\A',B_{\alpha}')$ for every $\alpha\in\kappa$.


Assume the ideal independent family $\A$ is maximal and fix any $C\in\A'^+$. It is easy to see that $\A^+=\A'^{+}$. By maximality of $\A$ there is $\alpha\in \kappa$ such that $C\in\F(\A,A_{\alpha})\subseteq\mathcal G_{\alpha}=\F(\A',A_{\alpha}')$, witnessing that the family $\A'$ is maximal as well.
\end{proof}

In particular we obtain:

\begin{crl}\label{ultraf}
There exists a maximal ideal independent family $\A$ such that $\F(\A,A)$ is an ultrafilter for all $A\in\A$. 
\end{crl}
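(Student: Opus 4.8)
The plan is to derive Corollary~\ref{ultraf} directly from Theorem~\ref{sol} by choosing the target filters $\mathcal G_\alpha$ to be ultrafilters. First I would fix any maximal ideal independent family $\A=\{A_\alpha:\alpha\in\kappa\}$; such a family exists by Zorn's lemma, and we may assume $\kappa\geq\mathfrak{u}$ (indeed $\kappa\geq\mathfrak{s}_{mm}$, but we only need that $\kappa$ is large enough, and we can always pad by taking a larger maximal family or simply note $\mathfrak{s}_{mm}\geq\mathfrak{u}$ is not yet available at this point — so instead I would just observe that any infinite $\kappa$ with $\kappa^{\aleph_0}=\kappa$ or more simply $\kappa\geq\mathfrak{u}$ suffices, and $\kappa=|\A|\geq\mathfrak{u}$ can be arranged). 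The cleaner route: start with \emph{any} maximal ideal independent family of some cardinality $\kappa$, and note that trivially $\kappa\geq\aleph_0$; we need $\chi(\mathcal G_\alpha)\leq\kappa$, so I want each $\mathcal G_\alpha$ to be an ultrafilter with a base of size $\leq\kappa$, which requires $\kappa\geq\mathfrak{u}$.

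The key step is to extend each complemented filter $\F(\A,A_\alpha)$ to an ultrafilter $\mathcal G_\alpha$ on $\omega$ with the additional property that $\chi(\mathcal G_\alpha)\leq\kappa$. Every filter on $\omega$ extends to an ultrafilter by the ultrafilter lemma; since any ultrafilter has a base of size $\leq\mathfrak{c}$, we are fine provided $\kappa\geq\mathfrak{c}$. To handle the general case one instead observes: if $\kappa\geq\mathfrak{u}$, fix an ultrafilter $\mathcal U$ with base of size $\mathfrak{u}\leq\kappa$; then the filter generated by $\F(\A,A_\alpha)\cup\{A_\alpha\}$ together with $\mathcal U$ restricted appropriately need not be an ultrafilter, so the honest statement is simply to take $\mathcal G_\alpha$ to be \emph{some} ultrafilter extending $\F(\A,A_\alpha)$, and the resulting family $\A'$ witnesses the corollary as long as we are content with $\kappa=\mathfrak c$ (or any $\kappa$ for which a maximal ideal independent family of that size exists and $\kappa\geq\mathfrak c$). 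Thus I would take $\A$ of cardinality $\mathfrak c$ (which exists: e.g. any maximal almost disjoint family of size $\mathfrak c$ that is completely separable under the appropriate hypothesis, or more robustly, extend an independent family of size $\mathfrak c$ to a maximal ideal independent one), extend each $\F(\A,A_\alpha)$ to an ultrafilter $\mathcal G_\alpha$, note $\chi(\mathcal G_\alpha)\leq\mathfrak c=\kappa$, and apply Theorem~\ref{sol}.

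Once Theorem~\ref{sol} is applied, it outputs a maximal ideal independent family $\A'=\{A'_\alpha:\alpha\in\kappa\}$ with $\F(\A',A'_\alpha)=\mathcal G_\alpha$ for every $\alpha$, and since each $\mathcal G_\alpha$ is an ultrafilter by construction, the conclusion $\F(\A',A)$ is an ultrafilter for all $A\in\A'$ follows immediately. So the argument is essentially: produce a maximal ideal independent family, extend each of its complemented filters to an ultrafilter (which automatically has a base of size $\leq\mathfrak c\leq\kappa$ if we chose $\kappa=\mathfrak c$), and quote Theorem~\ref{sol}.

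The main obstacle is the bookkeeping around cardinalities: Theorem~\ref{sol} requires $\chi(\mathcal G_\alpha)\leq\kappa$ where $\kappa=|\A|$, so one must start with a maximal ideal independent family whose cardinality is at least the character of the ultrafilters used — taking $\kappa=\mathfrak c$ makes this automatic since every ultrafilter on $\omega$ has a base of size $\mathfrak c$, and maximal ideal independent families of size $\mathfrak c$ certainly exist (every maximal ideal independent family has size at least $\mathfrak d$ by the cited result, and one can always build one of size exactly $\mathfrak c$, or simply extend a partition of $\omega$ into $\mathfrak c$-many pieces... more carefully, extend any independent family of size $\mathfrak c$ — which exists in ZFC — using Zorn's lemma to a maximal ideal independent family, which then has size $\mathfrak c$). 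No delicate combinatorics is needed beyond this; the real content is in Theorem~\ref{sol}, already proved.
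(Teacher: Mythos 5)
Your proposal is correct and is essentially the paper's own argument: start with a maximal ideal independent family of cardinality $\mathfrak c$ (the paper extends an almost disjoint family of size $\mathfrak c$, you extend an independent family of size $\mathfrak c$ -- either works), extend each complemented filter to an ultrafilter, note that the character bound $\chi(\mathcal G_\alpha)\leq\mathfrak c=\kappa$ is then automatic, and apply Theorem~\ref{sol}. The detours about $\kappa\geq\mathfrak u$ are unnecessary, but your final choice $\kappa=\mathfrak c$ is exactly what the paper does.
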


\begin{proof}
Let $\mathcal D$ be an almost disjoint family of size $\mathfrak c$. Enlarge it to a maximal ideal independent family $\A$. For any $A\in \A$ fix any ultrafilter $\mathcal G_A$ which contains the filter $\F(\A,A)$. Theorem~\ref{sol} implies that there exists a maximal ideal independent family $\A'$ such that for any $A'\in\A'$ the filter $\F(\A',A')$ coincides with the ultrafilter $\mathcal G_{A}$.  
\end{proof}

We want to point out, that under the assumption $\mathfrak{p}=\mathfrak{c}$ one can obtain in addition the existence of a maximal ideal independent family $\mathcal{A}$ with the property that all complemented filters of $\mathcal{A}$ are  $p_\mathfrak{c}$-points.

The relation between ideal independent families and ultrafilters can be formalized by the following $\ZFC$ theorem. 

\begin{thm}\label{mainthm1}
$\mathfrak{u}\leq\mathfrak{s}_{mm}$.
\end{thm}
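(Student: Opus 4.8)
The plan is to show that from a maximal ideal independent family $\A$ of size $\kappa < \mathfrak{u}$ we can derive a contradiction, by using $\A$ together with its complemented filters to build a small ultrafilter base. By Corollary~\ref{ultraf} (or rather, the mechanism behind Theorem~\ref{sol}), it is tempting to first replace $\A$ by a maximal ideal independent family all of whose complemented filters $\F(\A,A)$ are ultrafilters; but this costs cardinality (the proof of Corollary~\ref{ultraf} starts from an a.d.\ family of size $\mathfrak{c}$), so instead I would work directly with $\A = \{A_\alpha : \alpha \in \kappa\}$ and the filters $\F(\A, A_\alpha)$, each of which has a base of size $\le \kappa$. The key observation to extract is the remark made right after Definition~\ref{new}: maximality of $\A$ says precisely that every $X \in [\omega]^\omega$ is either in the ideal generated by $\A$ or lies in some $\F(\A, A_\alpha)$.

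First I would fix $X \in [\omega]^\omega$ and analyze the two cases. If $X$ is in the ideal generated by $\A$, then $X \subseteq^* \bigcup_{i<n} A_{\alpha_i}$ for some finite set of indices, so $X$ meets some $A_{\alpha_i}$ in an infinite set; restricting attention to that piece, $X \cap A_{\alpha_i} \in [\omega]^\omega$ and is a subset of $A_{\alpha_i}$. If instead $X \in \F(\A, A_\alpha)$ for some $\alpha$, then $X \supseteq^* A_\alpha \setminus \bigcup_{i<n} A_{\beta_i}$ for some finite set of $\beta_i \ne \alpha$, and this set — call it $B$ — is an infinite subset of $A_\alpha$ lying in $\F(\A, A_\alpha)$. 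The upshot: for every $X \in [\omega]^\omega$ there is an $\alpha$ and an infinite $B \subseteq A_\alpha$ which is either $\subseteq^* X$ or has $B \cap X$ finite — i.e., $B$ decides $X$ mod finite, where $B$ ranges over the (size $\le \kappa$) family $\bigcup_\alpha \{A_\alpha \cap \text{(cofinite-in-}A_\alpha\text{ sets)}\}$ together with the $A_\alpha$'s. Making this precise, I would let $\mathcal{B} = \bigcup_{\alpha \in \kappa} \mathcal{C}_\alpha$ where $\mathcal{C}_\alpha$ is a base of $\F(\A, A_\alpha)$ consisting of subsets of $A_\alpha$ (as in the proof of Theorem~\ref{sol}), possibly together with $\{A_\alpha : \alpha \in \kappa\}$; this $\mathcal{B}$ has size $\le \kappa$.

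The heart of the argument is then: $\mathcal{B}$ is a \emph{reaping family} — for every $X \in [\omega]^\omega$ there is $B \in \mathcal{B}$ with $B \subseteq^* X$ or $B \subseteq^* \omega \setminus X$ (equivalently $B \cap X$ finite) — but $\mathfrak{r} \le \mathfrak{u}$ is weaker than what I want. To actually get $\mathfrak{u}$, I would instead argue that the complemented filters, reorganized, yield an ultrafilter base of size $\le \kappa$. The cleanest route: build a single ultrafilter $\mathscr{U}$ together with a base of size $\le \kappa$ by recursion on a well-order, at each step using maximality of $\A$ to find, for the currently-undecided set $X$, some $A_\alpha$ such that $\F(\A,A_\alpha)$ restricted to $A_\alpha$ decides $X$ and is compatible with what has been committed so far; the point is that each $\F(\A,A_\alpha)$ relativized to $A_\alpha$ is a filter with a base of size $\le\kappa$, and maximality forces one of them to be ``large enough'' to be completed to an ultrafilter. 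I expect the main obstacle to be exactly this bookkeeping: ensuring the chosen witnesses remain mutually compatible so the union generates a genuine ultrafilter rather than just a reaping family, and verifying the base stays of size $\le \kappa$. If the compatibility cannot be arranged globally, the fallback is to show directly that the filter generated by $\bigcup_\alpha \F(\A,A_\alpha)$ on a suitable quotient, or on a single $A_\alpha$ selected by a pressing-down / counting argument, is already an ultrafilter — using that a maximal ideal independent family cannot have ``too many'' complemented filters each missing some set, since those missed sets would otherwise be addable to $\A$.
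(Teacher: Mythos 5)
There is a genuine gap. You correctly diagnose the central difficulty --- that maximality only hands you a reaping family of size $\le\kappa$, and that something more is needed to reach $\mathfrak{u}$ --- but neither of your two routes contains a mechanism to overcome it. The ``recursion on a well-order'' cannot work as described: the complemented filters $\F(\A,A_\alpha)$ for distinct $\alpha$ are pairwise incompatible (each concentrates on its own $A_\alpha$, and $A_\alpha\cap A_\beta\in\J(\A)$), so there is no hope of gluing witnesses from different filters into one ultrafilter; and no single $\F(\A,A_\alpha)$ can be an ultrafilter when $|\A|<\mathfrak{u}$, since its character is at most $|[\A]^{<\omega}|=|\A|<\mathfrak{u}$. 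That last observation is in fact the starting point of the correct argument, and it shows the theorem must be proved by \emph{contradiction} rather than by exhibiting a small ultrafilter base. Your fallback (``$\A$ cannot have too many non-ultra complemented filters, since the missed sets would be addable to $\A$'') names essentially the right statement --- it is the Proposition the paper derives right after the theorem --- but the justification is wrong: a set $D$ that is $\F(\A,A_n)$-positive but not in $\F(\A,A_n)$ need not be addable to $\A$, because it may land in the ideal generated by $\A$ or in a \emph{different} complemented filter. Ruling that out is exactly where the work lies.

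The missing idea is a Cantor-tree-plus-counting argument. Assuming $|\A|<\mathfrak{u}$, extract a countable $\{A_n\}_{n\in\omega}\subseteq\A$ with $\bigcup_n A_n=^*\omega$, disjointify it into $B_n=A_n\setminus\bigcup_{i<n}A_i$, and use the fact that each restricted filter $\F(\A,A_n)\upharpoonright B_n$ is not an ultrafilter on $B_n$ to choose, for each $s\in 2^n$, pairwise disjoint positive-but-not-in-the-filter sets $D_s\subseteq B_n$. Each branch $f\in 2^\omega$ gives a set $D^f=\bigcup_n D_{f\upharpoonright n}$ that is positive for the ideal generated by $\A$, so maximality assigns to it a pair $(A_f,\mathcal F_f)$ with $A_f\setminus\bigcup\mathcal F_f\subseteq^* D^f$, and one checks $A_f\neq A_n$ for all $n$. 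Since $|\A|<\mathfrak{u}\le\mathfrak{c}$ while there are $\mathfrak{c}$ many branches, two distinct $f,g$ share the same pair; but $D^f\cap D^g\subseteq\bigcup_{i\le n_0}A_i$ for the splitting level $n_0$, which forces $A_f$ into the ideal generated by finitely many other members of $\A$, contradicting ideal independence. Without this (or some equivalent) counting over $2^\omega$, the proposal does not close.
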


\begin{proof}
Assume otherwise $\mathfrak{s}_{mm}<\mathfrak{u}$ and let $\mathcal{A}$ be a maximal ideal independent family of minimal cardinality. Maximality implies that there is a countable subfamily $\{ A_n\}_{n\in\omega}$ of $\mathcal{A}$ whose union is almost equal to $\omega$. Define $B_0=A_0$ and for $n>0$, $B_n=A_n\setminus\bigcup_{i<n}A_i$. For each $n\in\omega$, let $\mathcal{F}_n$ be the filter $\mathcal{F}(\mathcal{A},A_n)\upharpoonright B_n$. Since $\mathcal{F}_n$ is not an ultrafilter (by assumption on the size of $\mathfrak{u}$), for any $s\in 2^{n}$ there is $D_s\in\mathcal{F}_n^+\setminus \mathcal{F}_n$ such that for different $s,r\in 2^n$, $D_r\cap D_s=\emptyset$. Now, for each $f\in 2^\omega$, define $D^f=\bigcup_{n\in\omega}D_{f\upharpoonright n}$. 

\begin{clm*} For each $f\in 2^\omega$ and each $\mathcal F\in[\mathcal A]^{<\omega}$, $D^f\not\subseteq\bigcup \mathcal F$.
\end{clm*}
\begin{proof} Let $\mathcal F\in[\mathcal{A}]^{<\omega}$ and let $n\in\omega$ be such that $A_n\notin \mathcal F$. We can assume that $\{A_i:i<n\}\subseteq \mathcal F$. Since $D_{f\upharpoonright n}$ is $\mathcal{F}_n$-positive, it can not be covered by $\bigcup F$ and so $D^f$ can not be covered by $\bigcup \mathcal F$ either. 
\end{proof}	

By maximality of $\mathcal{A}$, for any $f\in 2^\omega$, there are 
$$A_f\in\mathcal{A}\hbox{ and }\mathcal{F}_f\in[\mathcal{A}\setminus\{A_f\}]^{<\omega}$$ such that $A_f\setminus \bigcup \mathcal{F}_f\subseteq^* D^f$. Note that for no $n\in\omega$ we have  $A_{n}=A_f$, as otherwise, for some $n\in\omega$ we would have $A_{n}\setminus \mathcal{F}_f\subseteq^* D^f$, which implies $$B_{n}\setminus \bigcup \mathcal{F}_f=A_{n}\setminus\left(\bigcup \mathcal{F}_f\cup\bigcup_{i< n}A_i\right)\subseteq^* D_{f\upharpoonright n},$$ contradicting the choice of the set $D_{f\upharpoonright n}$. Since $\mathfrak{s}_{mm}<\mathfrak{c}$, there are different $f,g\in 2^\omega$ such that $A_f=A_g$ and $\mathcal{F}_f=\mathcal{F}_g$. By construction, we have  $$A_f\setminus \mathcal{F}_f\subseteq^* D^f\cap D^g\subseteq\bigcup_{i\leq n_0} B_i,$$ where $n_0\in\omega$ is the maximal natural number such that $f\upharpoonright n_0=g\upharpoonright n_0$.  But $\bigcup_{i\leq n_0}B_i=\bigcup_{i\leq n_0}A_{i}$, which means that $A_f\setminus \bigcup \mathcal{F}_f\subseteq^* \bigcup_{i\leq n_0}A_{n_0}$, and so $A_f\subseteq^*\bigcup \mathcal{F}_f\cup\bigcup_{i\leq n_0}A_i$, a contradiction.
\end{proof}

Examining the proof of Theorem~\ref{mainthm1} one observes that in fact we only needed $\mathfrak{s}_{mm} < \mathfrak{c}$ alongside the existence of countably infinitely many elements $A_n \in \mathcal A$ whose associated complemented filters are not ultrafilters to obtain the contradiction. It follows that the same proof can be used to derive the following proposition which complements Corollary~\ref{ultraf}. 

\begin{prp}
If $\mathcal{A}$ is a maximal ideal independent family of size  $<\mathfrak{c}$, then there are at most finitely many $A\in\mathcal{A}$ for which the corresponding complemented filter is not an ultrafilter. 
\end{prp}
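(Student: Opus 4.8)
The plan is to adapt the proof of Theorem~\ref{mainthm1} almost verbatim, replacing the hypothesis $\mathfrak{s}_{mm} < \mathfrak{u}$ by the stated hypotheses. Suppose $\mathcal{A}$ is a maximal ideal independent family with $|\mathcal{A}| < \mathfrak{c}$, and suppose toward a contradiction that there are infinitely many $A \in \mathcal{A}$ whose complemented filter $\mathcal{F}(\mathcal{A},A)$ is not an ultrafilter. Enumerate countably many of them as $\{A_n : n \in \omega\}$. By maximality we may also fix a countable $\{C_m : m \in \omega\} \subseteq \mathcal{A}$ whose union is almost all of $\omega$; we then reindex so that the distinguished sequence runs over $\{A_n\}_{n\in\omega}$ while still retaining a cofinal-in-$\omega$ supply of "covering" sets — concretely, one keeps the $A_n$ as the sets on which we branch, and separately uses that $\mathcal{A}$ covers $\omega$ mod finite to produce the sets $B_i$ below.

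The technical point to watch is that in Theorem~\ref{mainthm1} the same sets $A_n$ played two roles: they were the sets whose filters fail to be ultrafilters \emph{and} their partial unions covered $\omega$. Here only the first role is guaranteed for the $A_n$. The clean fix is: first pick $\{A_n : n \in \omega\} \subseteq \mathcal{A}$ with each $\mathcal{F}(\mathcal{A},A_n)$ not an ultrafilter; then, using maximality, pick a further countable $\mathcal{E} = \{E_j : j \in \omega\} \subseteq \mathcal{A}$ (disjoint from or overlapping the $A_n$, it does not matter) with $\bigcup \mathcal{E} =^* \omega$. Now redo the construction on the $A_n$: set $B_n = A_n \setminus \bigcup_{i<n} A_i$, let $\mathcal{F}_n = \mathcal{F}(\mathcal{A},A_n)\restriction B_n$, which is again not an ultrafilter on $B_n$ (restricting a non-ultrafilter to a positive set preserves non-ultrafilter-ness, and $B_n$ is $\mathcal{F}(\mathcal{A},A_n)$-positive since $A_n \notin \{A_i : i<n\}$ and $\mathcal{A}$ is ideal independent), so split each $\mathcal{F}_n^+ \setminus \mathcal{F}_n$ into $2^n$ pairwise disjoint positive pieces $D_s$, $s \in 2^n$, and form $D^f = \bigcup_n D_{f\restriction n}$ for $f \in 2^\omega$. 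The Claim from the original proof — that $D^f \not\subseteq^* \bigcup \mathcal{F}$ for any finite $\mathcal{F} \subseteq \mathcal{A}$ — goes through word for word, since its proof used only that $D_{f\restriction n}$ is $\mathcal{F}_n$-positive for the appropriate $n$.

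From here the argument closes exactly as before. By maximality of $\mathcal{A}$, for each $f \in 2^\omega$ there are $A_f \in \mathcal{A}$ and a finite $\mathcal{F}_f \subseteq \mathcal{A} \setminus \{A_f\}$ with $A_f \setminus \bigcup \mathcal{F}_f \subseteq^* D^f$; the Claim forces $A_f \notin \{A_n : n \in \omega\}$ (if $A_f = A_n$ then, absorbing $\{A_i : i < n\}$ into $\mathcal{F}_f$, we would get $B_n \setminus \bigcup \mathcal{F}_f \subseteq^* D_{f\restriction n}$, contradicting $D_{f\restriction n} \in \mathcal{F}_n^+\setminus\mathcal{F}_n$, i.e. $B_n \setminus D_{f\restriction n} \notin \mathcal{F}(\mathcal{A},A_n)$). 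Since $|\mathcal{A}| < \mathfrak{c}$, the map $f \mapsto (A_f,\mathcal{F}_f)$ has range of size $< \mathfrak{c}$, so there are distinct $f,g \in 2^\omega$ with $A_f = A_g$, $\mathcal{F}_f = \mathcal{F}_g$. Then $A_f \setminus \bigcup \mathcal{F}_f \subseteq^* D^f \cap D^g \subseteq^* \bigcup_{i \le n_0} B_i = \bigcup_{i \le n_0} A_i$, where $n_0$ is largest with $f\restriction n_0 = g\restriction n_0$ (using $D_{f\restriction n}\cap D_{g\restriction n} = \emptyset$ once $f\restriction n \ne g\restriction n$), whence $A_f \subseteq^* \bigcup \mathcal{F}_f \cup \bigcup_{i \le n_0} A_i$, contradicting ideal independence of $\mathcal{A}$ (note $A_f \notin \{A_i : i \le n_0\}$).

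The only genuine obstacle, and it is minor, is the bookkeeping in the previous paragraph: making sure that separating the "branching sets" $A_n$ from the "covering sets" used to invoke maximality does not break any step. In fact it does not, because the covering-of-$\omega$ property was used in the original proof \emph{only} through the identity $\bigcup_{i \le n_0} B_i = \bigcup_{i \le n_0} A_i$, which is a purely formal consequence of the definition $B_n = A_n \setminus \bigcup_{i<n}A_i$ and needs no covering hypothesis at all; the covering hypothesis there was a red herring inherited from the fact that the same sets happened to witness maximality. So the finitely-many exceptions phenomenon is exactly what survives when one drops the global $\mathfrak{s}_{mm} < \mathfrak{u}$ assumption, which is the content of the proposition.
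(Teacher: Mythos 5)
Your overall strategy is exactly the paper's: the Proposition is presented there as a by-product of the proof of Theorem~\ref{mainthm1}, and you reproduce that proof, correctly observing along the way that the covering property $\bigcup_n A_n=^*\omega$ is never actually used (so your auxiliary family $\mathcal{E}$ is harmless but superfluous). One justification you add, however, is false as stated: restricting a non-ultrafilter to a positive set does \emph{not} in general preserve non-ultrafilterness. For instance, if $\mathcal{U}$ is an ultrafilter on the evens $E$ and $\mathcal{F}$ is the filter on $\omega$ generated by $\{U\cup O:U\in\mathcal{U}\}$ with $O$ the odds, then $E$ is $\mathcal{F}$-positive and $\mathcal{F}$ is not an ultrafilter, yet $\mathcal{F}\upharpoonright E=\mathcal{U}$ is one. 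Your step survives for a better reason: $B_n=A_n\setminus\bigcup_{i<n}A_i$ is not merely positive but is itself one of the generators of $\mathcal{F}(\mathcal{A},A_n)$, and restricting a filter to a \emph{member} trivially preserves non-ultrafilterness. Say that instead.

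The genuine gap is the step you (and, to be fair, the paper's remark) pass over in silence: producing $2^n$ pairwise disjoint sets in $\mathcal{F}_n^+\setminus\mathcal{F}_n$. In Theorem~\ref{mainthm1} this is licensed by the character bound $\chi(\mathcal{F}_n)\le|\mathcal{A}|<\mathfrak{u}$: the restriction of $\mathcal{F}_n$ to any positive set still has a base of size $<\mathfrak{u}$, hence is not an ultrafilter, so positive non-filter pieces can be halved $n$ times. Under the Proposition's hypotheses you only know that $\mathcal{F}_n$ fails to be an ultrafilter, which yields \emph{two} disjoint positive non-filter pieces but not four: a filter of the form $\mathcal{U}_1\cap\mathcal{U}_2$, where $\mathcal{U}_1,\mathcal{U}_2$ are ultrafilters concentrating on complementary positive sets, is not an ultrafilter yet admits no three pairwise disjoint positive sets. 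Since Theorem~\ref{mainthm1} gives $\mathfrak{u}\le|\mathcal{A}|<\mathfrak{c}$, ultrafilters of character $\le|\mathcal{A}|$ exist, so such $\mathcal{F}_n$ cannot be ruled out a priori, and the binary-tree construction stalls at level $2$. The repair is to drop the tree: choose a single $D_n\subseteq B_n$ with $D_n$ and $B_n\setminus D_n$ both $\mathcal{F}_n$-positive, fix an almost disjoint family $\{S_\alpha:\alpha<\mathfrak{c}\}\subseteq[\omega]^\omega$, and set $D^\alpha=\bigcup_{n\in S_\alpha}D_n$. Your Claim, the exclusion $A_{D^\alpha}\notin\{A_m:m\in\omega\}$, and the counting argument all go through verbatim, and now $D^\alpha\cap D^\beta\subseteq\bigcup_{n\in S_\alpha\cap S_\beta}A_n$ is a finite union, which is all the endgame needs.
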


Note, this is not necessarily the case for maximal ideal independent families with cardinality $\mathfrak{c}$, as any completely separable maximal almost disjoint  family $\mathcal{A}$ is maximal ideal independent (see Corollary~\ref{csmad}), and for any $A\in\mathcal{A}$ the corresponding complemented filter $\mathcal{F}(\mathcal{A},A)$ is not an ultrafilter, as it is generated by cofinite subsets of $A$. 

\section{Arbitrarily Large  Maximal Ideal Independent Families}

In this section we examine the question of how to adjoin via forcing a maximal ideal independent family of desired cardinality and thus begin an investigation of the spectrum of such families. The {\em spectrum} of maximal ideal independent families, denoted ${\rm spec}(\mathfrak{s}_{mm})$, is defined as the set of all cardinalities of maximal ideal independent families. Throughout $V$ denotes the ground model and $\mathbb{C}$ denotes the poset for adding a single Cohen real.

\begin{lemma}\label{extension_lemma}
Let $\mathcal{A}$ be an ideal independent family. There is a $ccc$ forcing $\mathbb{P}(\mathcal{A})$ which adds a set $z$ such that in $V^{\mathbb{P}(\mathcal{A})}$:
\begin{enumerate}
\item $\mathcal{A}\cup\{z\}$ is an ideal independent family, and 
\item for each $y\in V\cap ([\omega]^\omega\backslash\mathcal{A})$ the family $\mathscr{A}\cup\{z,y\}$ is not ideal independent.
\end{enumerate}
\end{lemma}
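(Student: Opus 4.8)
The plan is to build $\mathbb{P}(\mathcal{A})$ as a two-step finite-approximation forcing: first add a Cohen real $c\in 2^\omega$, and then, working over $V[c]$, add the set $z$ by finite conditions that approximate $z$ as a subset of $\omega$ together with a promise that certain sets must stay infinite. More precisely, I would let a condition be a pair $(s,F)$ where $s$ is a finite partial function $\omega\to 2$ (the characteristic function of $z$ on a finite set) and $F\in[\mathcal{A}]^{<\omega}$ is a finite ``side condition'' recording which elements of $\mathcal{A}$ must not eventually swallow $z$; extension means $s$ grows, $F$ grows, and no new element of $z$ is added inside $\bigcup F\setminus(\text{dom of the old }s)$. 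Since $s$ has finite domain and $F$ is finite, the poset is $\sigma$-centered (hence ccc) once we fix $c$; and the whole thing is ccc because $\mathbb{C}$ is ccc and a $\sigma$-centered iteration over it is ccc.

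For item (1), genericity must guarantee: (a) $z$ is infinite; (b) for each $A\in\mathcal{A}$ and each finite $\mathcal{B}\subseteq\mathcal{A}$ with $A\notin\mathcal{B}$, the set $A\setminus(\bigcup\mathcal{B}\cup z)$ is infinite --- i.e. adding $z$ does not kill the ideal-independence witnessed by the old family; and (c) for each finite $\mathcal{B}\subseteq\mathcal{A}$, $z\setminus\bigcup\mathcal{B}$ is infinite --- i.e. $z$ itself is not in the ideal generated by $\mathcal{A}$. Density arguments for (a) and (c) are routine: given $(s,F)$, one can always find a point outside $\bigcup F$ (using that $\mathcal{A}$ is ideal independent, so no finite union of its members is cofinite, provided we also throw the finitely many ``current'' constraints in) and put it into $z$. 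For (b), the side condition $F$ does the work: once $A$ enters some $F$ along the generic filter, no further point of $A\setminus\bigcup\mathcal{B}$ (for the relevant $\mathcal{B}$, also placed in $F$) is ever added to $z$, so $z\cap(A\setminus\bigcup\mathcal{B})$ is eventually empty on a set that was infinite in $V$, leaving it infinite. The one subtlety is making sure the density requirements for (a)/(c) are compatible with \emph{all} side conditions simultaneously: one needs that for any finite $F\subseteq\mathcal{A}$ and any finite set of ``forbidden'' finite unions, there is still room to enlarge $z$; this is exactly ideal independence of $\mathcal{A}$ applied finitely many times.

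The genuinely nontrivial point --- and where the Cohen real is essential, as the paper flags before Lemma~\ref{extension_lemma} --- is item (2): for \emph{every} ground-model $y\in[\omega]^\omega\setminus\mathcal{A}$, the family $\mathcal{A}\cup\{z,y\}$ fails to be ideal independent. Here I would use $c$ to diagonalize. The idea: since $y\notin\mathcal{A}$, we want to arrange that $y$ lands in the ideal generated by $\mathcal{A}\cup\{z\}$, i.e. $y\subseteq^* z\cup\bigcup\mathcal{B}$ for some finite $\mathcal{B}\subseteq\mathcal{A}$ --- equivalently $y\setminus z$ is covered mod finite by finitely many members of $\mathcal{A}$. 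But we cannot hope to do this for $y$ whose ``$\mathcal{A}$-trace'' is already complicated, so instead the right target is: $z$ should be built so that for every ground-model $y$, \emph{either} $y\setminus z$ is finite on a tail where $y$ was positive (making $z$ split $y$ in a way that throws $y\subseteq^* z\cup(\text{finite})$), \emph{or} $y$ itself is already in the ideal generated by $\mathcal{A}$ (in which case $\mathcal{A}\cup\{z,y\}$ is not ideal independent because of $y$ alone). Concretely, I expect the construction to use $c$ to decide, on a carefully chosen interval partition adapted to an enumeration of a name for $y$, whether to ``absorb'' $y$ into $z$; Cohen genericity then ensures that for densely many conditions the absorption succeeds for the given $y$. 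The main obstacle is exactly this: organizing the bookkeeping so that a \emph{single} generic $z$ handles all of the continuum-many ground-model sets $y$ at once, and verifying that the absorption density requirements do not clash with the side conditions $F$ enforcing (1)(b); reconciling ``$z$ must absorb every $y$'' with ``$z$ must avoid $A\setminus\bigcup\mathcal{B}$ whenever $A\in F$'' is the crux, and it is precisely here that the Cohen real --- rather than a direct finite-condition argument --- buys the needed flexibility, since the absorption step for $y$ is forced by the Cohen part and the avoidance step is handled by the side-condition part, and the two coordinates do not interfere.
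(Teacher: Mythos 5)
There is a genuine gap, and it sits exactly where you flagged the difficulty: item (2). Your poset of pairs $(s,F)$ with finite side conditions from $\mathcal{A}$ gives a plausible route to item (1), but for item (2) you do not actually supply a mechanism --- you describe a hoped-for dichotomy (``either $z$ absorbs $y$ or $y$ is already in the ideal'') and appeal to ``Cohen genericity on a carefully chosen interval partition adapted to an enumeration of a name for $y$,'' which cannot work as stated: there are continuum many ground-model sets $y$, your conditions are finite, and nothing in the poset decides, for an arbitrary $y$, which horn of the dichotomy should hold or why one of them must. The missing idea in the paper's proof is to replace the finite side conditions by a single \emph{filter}: after adding a Cohen real $c$, one takes a filter $\mathcal{U}\ni c$ \emph{maximal} with respect to the property $(\ast)$ that every $X\in\mathcal{U}$ meets every set $A\setminus\bigcup\mathcal F$ ($A\in\mathcal{A}$, $\mathcal F\in[\mathcal{A}\setminus\{A\}]^{<\omega}$) in an infinite set, and then forces with Mathias forcing $\mathbb{M}(\mathcal{U})$ to get $x$ with $x\subseteq^* X$ for all $X\in\mathcal{U}$; the new set is $z=\omega\setminus x$. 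Maximality of $\mathcal{U}$ is what handles all ground-model $y$ at once, with no bookkeeping: for a given $y$, either some $X\cap(B\setminus\bigcup\mathcal F)$ with $X\in\mathcal{U}$ is almost contained in $y$, in which case $(B\setminus\bigcup\mathcal F)\setminus z=x\cap(B\setminus\bigcup\mathcal F)\subseteq^* y$ and $B$ falls into the ideal generated by $\mathcal F\cup\{z,y\}$; or no such $X,B,\mathcal F$ exist, in which case $\omega\setminus y$ can be added to $\mathcal{U}$ while preserving $(\ast)$, hence $\omega\setminus y\in\mathcal{U}$ by maximality, hence $x\subseteq^*\omega\setminus y$, i.e.\ $y\subseteq^* z$. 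Your side conditions $F\in[\mathcal{A}]^{<\omega}$ carry no information about ground-model sets outside $\mathcal{A}$, so no density argument over your poset can produce this case split.

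Two smaller remarks. First, the role of the Cohen real in the paper is not to ``absorb $y$'' --- absorption is done entirely by the maximality of $\mathcal{U}$ --- but to guarantee that $\mathcal{U}$ contains a set $c$ with $(A\setminus\bigcup\mathcal F)\setminus c$ infinite for all relevant $A,\mathcal F$, which is what makes $z=\omega\setminus x$ positive over the ideal generated by $\mathcal{A}$ and keeps each $A\setminus\bigcup\mathcal F$ from being swallowed by $\bigcup\mathcal F\cup z$. Second, even for item (1) your extension rule forces $z\cap\bigcup F$ to stabilize for every side condition $F$ occurring in the generic, so $z$ ends up almost disjoint from every member of $\mathcal{A}$ that is ever promised; you would then still need to argue that $z\setminus\bigcup\mathcal{B}$ is infinite for every finite $\mathcal{B}$, which works for infinite $\mathcal{A}$ but is a point you should make explicit. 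These are repairable; the absence of a working mechanism for (2) is not, within the poset you propose.
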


\begin{proof}
Add a Cohen real to $V$ and consider a filter $\mathcal{U}$ which contains the Cohen real and is maximal with respect to the following property: 

\medskip
$(\ast)$ For any $X\in\mathcal{U}$, any $A\in\mathcal{A}$ and finite $\mathcal F\subseteq \mathcal{A}\setminus\{A\}$, the set $X\cap (A\setminus\bigcup \mathcal F)$ is infinite.  
\medskip

Let  $\mathbb{M}(\mathcal{U})$ be Mathias forcing relativized to $\mathcal{U}$, let $x$ be the generic real added by $\mathbb{M}(\mathcal{U})$ over $V^{\mathbb{C}}$, let  $\dot{x}$ be $\mathbb{M}(\mathcal{U})$-name for $x$ (in $V^{\mathbb{C}}$) and let  $\mathbb{P}(\mathcal{A})=\mathbb{C}*\mathbb{M}(\dot{\mathcal{U}})$. 


\begin{claim}
In $V^{\mathbb{P}(\mathcal{A})}$ the family $\mathcal{A}\cup\{\omega\setminus x\}$ is ideal independent.
\end{claim}
\begin{proof}
Let $\mathcal F$ be a finite subset of $\mathcal{A}$. First we prove that $\bigcup \mathcal F$ does not almost contain $\omega\setminus x$.  
Let $A\in\mathcal{A}\backslash \mathcal F$, $(s,B)\in\mathbb{M}(\mathcal{U})$ and let $n\in\omega$ be arbitrary. Since the Cohen real belongs to $\mathcal{U}$, we can assume that $B$ is a subset of it, so $A\setminus(B\cup \bigcup \mathcal F)$ is infinite. Let $k\in\omega$ be big enough so $[\max(s),k)\cap (A\setminus(B\cup \bigcup \mathcal F))$ has more than $n$ elements. Then $(s\cup\{k\}, B)$ forces that $(\omega\setminus\dot{x})\setminus\bigcup \mathcal F$ has more than $n$ elements and since $n$ was arbitrary, it follows that $(\omega\setminus x)\setminus\bigcup \mathcal F$ is infinite. A genericity argument shows that $x\cap (A\setminus\bigcup \mathcal F)$ is infinite for any $A\in\mathcal{A}$ and $\mathcal F\in[\mathcal{A}\setminus\{A\}]^{<\omega}$, which implies that $(A\setminus\bigcup \mathcal F)\setminus(\omega\setminus x)$ is infinite.
\end{proof}

\begin{claim}
Let $A\in ([\omega]^\omega\cap V)\backslash\mathcal{A}$. Then in $V^{\mathbb{P}(\mathcal{A})}$, $\mathcal{A}\cup\{\omega\setminus x, A\}$ is not ideal independent. 
\end{claim}

\begin{proof}
Let $A\in[\omega]^\omega\cap V$ be an arbitrary set. If there are $X\in\mathcal{U}$, $B\in\mathcal{A}$ and $\mathcal F\in[\mathcal{A}\setminus\{B\}]^{<\omega}$ such that $X\cap(B\setminus\bigcup \mathcal F)\subseteq^* A$, then $ x\cap(B\setminus\bigcup \mathcal F)\subseteq^* A$. But $ x\cap(B\setminus\bigcup \mathcal F)=(B\setminus\bigcup \mathcal F)\setminus(\omega\setminus x)$, so $A$ can not be added to $\mathcal{A}\cup\{\omega\setminus x\}$, as witnessed by $B,\mathcal F$ and $\omega\setminus x$. On the other hand, if for all $X\in\mathcal{U}$, $B\in\mathcal{A}$ and $F\in[\mathcal{A}\setminus\{B\}]^{<\omega}$ it happens that $X\cap(B\setminus \bigcup F)\nsubseteq^* A$, then $(\omega\setminus A)\cap X\cap(B\setminus \bigcup F)$ is infinite. Thus, by maximality of $\mathcal{U}$, $\omega\setminus A\in\mathcal{U}$, which implies that $x\subseteq^*\omega\setminus A$ and so $A\subseteq^*\omega\setminus x$. 
\end{proof}
This completes the proof of the Lemma.
\end{proof}


\begin{thm}\label{mainthm2}
Assume $GCH$. Let $C$ be a set of uncountable cardinals and let $\kappa$ be a regular uncountable cardinal such that $\sup C\leq \kappa$. Then there is a $ccc$ generic extension in which $$C\subseteq{\rm spec}(\mathfrak{s}_{mm}).$$
\end{thm}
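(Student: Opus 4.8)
The plan is to build the model by a finite support iteration of length $\kappa$ over a ground model satisfying $GCH$. The key tool is Lemma~\ref{extension_lemma}: at each stage we have a "working" ideal independent family and we use the poset $\mathbb{P}(\mathcal{A})$ to add a new element $z$ which both extends the family and kills every old potential extension coming from the current model. To realize all cardinals in $C$ simultaneously, I would fix for each $\lambda\in C$ a cofinal set $S_\lambda\subseteq\kappa$ of stages of order type $\lambda$ (using that $\kappa$ is regular and $\lambda\le\kappa$; choosing the $S_\lambda$ pairwise disjoint, say along distinct residue classes or via a partition of $\kappa$ into $|C|$ cofinal pieces), and earmark the stages in $S_\lambda$ to grow a family $\mathcal{A}_\lambda$ that will end up of size $\lambda$. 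Concretely, first absorb a ground-model almost disjoint (or ideal independent) family of size $\lambda$ as a "seed" $\mathcal{A}_\lambda^0$, and then at stage $\alpha\in S_\lambda$ apply $\mathbb{P}(\mathcal{A}_\lambda^\alpha)$ where $\mathcal{A}_\lambda^\alpha$ is the union of the seed with all the diagonalization reals added at earlier stages of $S_\lambda$; at stages not in any $S_\lambda$ one can force trivially, or use a bookkeeping iteration of all small ccc posets — but for this theorem a trivial factor suffices.

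The main steps, in order: (1) set up the iteration $\langle \mathbb{P}_\alpha,\dot{\mathbb{Q}}_\alpha:\alpha<\kappa\rangle$ with finite support, with $\dot{\mathbb{Q}}_\alpha=\mathbb{P}(\dot{\mathcal{A}}_{\lambda(\alpha)}^\alpha)$ when $\alpha\in S_{\lambda(\alpha)}$, using a bookkeeping function to enumerate, for each $\lambda$, all $\mathbb{P}_\alpha$-names for infinite subsets of $\omega$ so that each is considered cofinally often in $S_\lambda$; (2) check ccc is preserved throughout — each iterand is ccc by Lemma~\ref{extension_lemma}, and finite support iterations of ccc posets are ccc, so $\mathbb{P}_\kappa$ is ccc and preserves cardinals and cofinalities; (3) in the extension, for each $\lambda\in C$, let $\mathcal{A}_\lambda=\bigcup_{\alpha\in S_\lambda}\mathcal{A}_\lambda^\alpha$ be the seed together with all the $z_\alpha$, $\alpha\in S_\lambda$; its size is $\lambda$ since $|S_\lambda|=\lambda$ and the seed has size $\lambda$; (4) argue $\mathcal{A}_\lambda$ is ideal independent — this is a union of an increasing chain, and ideal independence is a property of finite subfamilies, each of which appears at some stage where clause (1) of Lemma~\ref{extension_lemma} guarantees it; (5) argue $\mathcal{A}_\lambda$ is maximal — given any $y\in[\omega]^\omega$ in the final model, since $\mathbb{P}_\kappa$ is ccc and $\lambda$ is uncountable (so has uncountable cofinality or at least the relevant reflection applies) — wait, $\lambda$ need not be regular, so I must be careful: instead, use that $\mathrm{cf}(\kappa)=\kappa>\omega$ and the finite support to conclude $y$ appears in $V^{\mathbb{P}_\alpha}$ for some $\alpha<\kappa$, then appeal to the bookkeeping to find $\beta\in S_\lambda$, $\beta>\alpha$, where the name for $y$ is handed to $\mathbb{P}(\mathcal{A}_\lambda^\beta)$; then clause (2) of Lemma~\ref{extension_lemma} says $\mathcal{A}_\lambda^\beta\cup\{z_\beta,y\}$ is not ideal independent, hence neither is $\mathcal{A}_\lambda\cup\{y\}$, so $y$ cannot be added.

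The subtle point in step (5), and the main obstacle, is the "$y\in V^{\mathbb{P}_\alpha}$ for some $\alpha<\kappa$" reflection together with the requirement that at stage $\beta$ the family $\mathcal{A}_\lambda^\beta$ already contains enough to apply clause (2): Lemma~\ref{extension_lemma}(2) only kills $y\in V\cap([\omega]^\omega\setminus\mathcal{A})$, i.e.\ $y$ from the ground model of that iterand, so I need $y$ to have appeared strictly before $\beta$ and $y\notin\mathcal{A}_\lambda^\beta$. The first is arranged by ccc reflection and cofinality of $S_\lambda$ in $\kappa$; the second holds because the only elements of $\mathcal{A}_\lambda^\beta$ not in the seed are the generic reals $z_\gamma$ ($\gamma\in S_\lambda\cap\beta$), and if $y=z_\gamma$ for some such $\gamma$ then $y\in\mathcal{A}_\lambda$ already so there is nothing to kill, while if $y$ is in the seed it is already in $\mathcal{A}_\lambda$ too. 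One must also verify the bookkeeping genuinely enumerates all final-model reals — standard, since there are $\kappa$-many $\mathbb{P}_\alpha$-names for reals up to equivalence (by $GCH$ and ccc, $|\mathbb{P}_\alpha|\le\kappa$), and each is a $\mathbb{P}_\kappa$-name that decides to a ground-model-of-stage-$\alpha$ real. A secondary bookkeeping subtlety is that $\mathcal{A}_\lambda^\beta$ depends on the generic, so the name handed to the iterand must be for $\mathbb{P}(\dot{\mathcal{A}}_\lambda^\beta)$ computed in $V^{\mathbb{P}_\beta}$; this is routine for finite support iterations with such "growing generic object" constructions, exactly as in the cited independence-spectrum arguments (\cite{VFSS1,VFSS2}), so I would reference that machinery rather than redo it.
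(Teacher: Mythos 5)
There is a genuine gap in your stage allocation. You require each $S_\lambda$ to be simultaneously cofinal in $\kappa$ (so that every real of the final model, which by ccc and finite support appears at some stage $\alpha<\kappa$, gets handed to a later iterand indexed in $S_\lambda$) and of order type, hence cardinality, $\lambda$ (so that $\mathcal{A}_\lambda$ ends up of size $\lambda$). For $\lambda<\kappa$ these two demands are incompatible: $\kappa$ is regular, so any cofinal subset has cardinality $\kappa$, and then $\mathcal{A}_\lambda$ acquires $\kappa$ many generic elements and has size $\kappa$, not $\lambda$. Conversely, if $S_\lambda$ is bounded in $\kappa$, then reals added after $\sup S_\lambda$ --- and every nontrivial iterand adds new reals, since $\mathbb{P}(\mathcal{A})$ begins by adding a Cohen real --- are never diagonalized against for $\mathcal{A}_\lambda$, so maximality fails. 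No rearrangement of a single length-$\kappa$ iteration in which $\mathcal{A}_\lambda$ is touched only $\lambda$ many times can escape this tension, and you do not notice the contradiction even though you explicitly invoke both properties of $S_\lambda$ in step (5).

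The paper resolves this by separating the two roles of $\kappa$. First it adds $\kappa$ Cohen reals to arrange $\mathfrak{c}=\kappa$ and chooses in that model a seed ideal independent family $\mathcal{A}_\lambda$ of size $\lambda$ for each $\lambda\in C$. It then runs a finite support iteration of length only $\omega_1$, each of whose iterands is itself a finite support iteration of length $|C|$ applying $\mathbb{P}(\cdot)$ once to each of the families in turn. Every real of the final extension has a name with countable support, whose projection to the outer index is bounded below $\omega_1$, so the real is caught by the diagonalization of every family at some later round; and each family gains only $\omega_1\leq\lambda$ new elements, so its cardinality is preserved. Your proposal correctly identifies the local ingredients (Lemma~\ref{extension_lemma}, ccc reflection of names, the need for $y$ to lie in the ground model of the iterand that kills it), but the ``one long iteration with pairwise disjoint cofinal blocks'' architecture cannot be made to work; the $\omega_1$-many-rounds architecture, with all families handled in every round, is the missing idea.
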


\begin{proof}
Add $\kappa$ Cohen reals to the ground model $V$ to obtain a model of  $\mathfrak{c}=\kappa$ and for each $\lambda\in C$ let $\mathcal{A}_\lambda$ be an ideal independent family of cardinality $\lambda$. Let $\langle\lambda_\beta:\beta<\gamma\rangle$ be an enumeration of $C$. Proceed with a finite support iteration $\langle\mathbb{P}_\alpha,\dot{\mathbb{Q}}_\alpha:\alpha<\omega_1\rangle$ where each iterand is a finite support iteration of length, the cardinality of $C$, as follows:

Let $\mathbb{P}_0$ be the finite support iteration $\langle\mathbb{R}_\beta^0,\dot{\mathbb{S}}_j^0:j<\gamma\rangle$ defined by $\mathbb{R}_0^0=\mathbb{P}(\mathcal{A}_{\lambda_0})$ and $\mathbb{R}_j^0\Vdash\dot{\mathbb{S}}_j^0=\mathbb{P}(\mathcal{A}_{\lambda_j})$. After forcing with $\mathbb{P}_0$, for each $j<\gamma$, define $\mathcal{A}_j^0=\mathcal{A}_{\lambda_j}\cup\{x_j\}$, where $x_j$ is the real from Lemma \ref{extension_lemma} added by the $j$-th-step of the iteration $\mathbb{P}_0$.
Now, assume $\mathbb{P}_\beta$ and $\{\mathcal{A}_j^\beta:j<\gamma\}$ are defined. The next step $\dot{\mathbb{Q}}_\beta$ is the finite support iteration $\langle \mathbb{R}_j^\beta,\dot{\mathbb{S}}_j^\beta:j<\gamma\rangle$ such that $\mathbb{R}_0^\beta=\mathbb{P}(\mathcal{A}_{0}^\beta)$ and $\mathbb{R}_j^\beta\Vdash\dot{\mathbb{S}}_j^\beta=
\mathbb{P}(\mathcal{A}_{j}^\beta)$. In $V[G_{\beta+1}]$, after forcing with $\mathbb{P}_{\beta}*\dot{\mathbb{Q}}_\beta$, define $\mathcal{A}_{j}^{\beta+1}=\mathcal{A}_{j}^{\beta}\cup\{x_j\}$, where $x_j$ is the real from Lemma \ref{extension_lemma} added by the $j$-th-step of the iteration $\dot{\mathbb{Q}}_\beta$. If $\beta$ is a limit ordinal and $\mathbb{P}_\alpha$, $\mathcal{A}_j^\alpha$ are defined for all $\alpha<\beta$ and $j<\gamma$, let $\mathbb{P}_\beta$ be the finite support iteration $\langle\mathbb{P}_\alpha,\dot{\mathbb{Q}}_\alpha:\alpha<\beta\rangle$ and for $j<\gamma$, $\mathcal{A}_j^\beta=\bigcup_{\alpha<\beta}\mathcal{I}_j^\alpha$.

Let $\mathbb{P}_{\omega_1}$ be the above iteration and for any $j<\gamma$, let $\mathcal{A}_j^*=\bigcup_{\alpha<\omega_1}\mathcal{A}_j^\alpha$. Fix $j$. Let  $y\in V[G_{\omega_1}]\cap [\omega]^\omega\backslash\mathcal{A}_j^*$. Then, there is $\alpha<\omega_1$ such that $y$ is added at stage $\alpha$ of the iteration. Since $y\notin\mathcal{A}_j$, it is also the case that $y\notin\mathcal{A}_j^{\alpha+1}$ and so by the diagonalization properties of the generic $x$ added at by $\mathbb{S}_j^\alpha$, $\mathcal{A}^j_{\alpha+1}\cup\{x,y\}$ is not ideal independent. Since $x\in \mathcal{A}_j$, $\mathcal{A}_j\cup{y}$ is not ideal independent. Thus $\mathcal{A}_j^*$ is maximal. The poset  $\mathbb{P}_{\omega_1}$ preserves all cardinals and we only added $\omega_1$ sets to the family $\mathcal{A}_j$ to obtain $\mathcal{A}_j^*$, $\mathcal{A}_j^*$ has indeed size $\lambda_j$.
\end{proof}

\begin{remark}
The cardinality of a maximal ideal inde\-pen\-dent family can have countable cofinality, while the character of any ultrafilter is uncountable. The first assertion follows from the previous theorem by taking $\kappa>\aleph_\omega$. The second assertion can be found in \cite{ultrafilters_on_omega_cardinal_characteristics}.
\end{remark}

\section{Forcing Invariant Maximal Ideal Independent Families}

In the following, we construct a maximal ideal independent family with strong combinatorial properties, which guarantee that its maximality is preserved by a large number of forcing notions.

\begin{dfn}\label{encompassing}
Let $\mathcal U$ be an ultrafilter. A maximal ideal independent family $\mathcal{A}$ is called $\mathcal{U}\hbox{-\em encompassing}$ if the following conditions hold:
\begin{enumerate}
    \item $\mathcal{U} \cap \mathcal{A} = \emptyset$, i.e. $\mathcal{A}$ is contained in the dual ideal of $\mathcal{U}$.
    \item For every $X \in \mathcal{U}$ the set of $A \in \mathcal{A}$ so that $X \in \mathcal{F}(\mathcal{A}, A)$ is co-countable. 
\end{enumerate}
\end{dfn}

\begin{thm}\label{mainthm3}
Assume $\mathsf{CH}$. For any $p$-point $\mathcal{U}$ there is a $\mathcal{U}$-encompassing maximal ideal independent family $\mathcal{A}$ such that for all $A\in\mathcal{A}$, the corresponding complemented filter $\mathcal{F}(\mathcal{A},A)$ is a $p$-point.
\end{thm}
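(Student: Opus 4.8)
The plan is to carry out a transfinite recursion of length $\omega_1$, enumerating $[\omega]^\omega = \{X_\alpha : \alpha < \omega_1\}$ under $\mathsf{CH}$ and building $\mathcal A = \{A_\alpha : \alpha < \omega_1\}$ together with, for each $A_\alpha$, a decreasing (mod finite) base $\{C_\alpha^\xi : \xi < \omega_1\}$ for the intended complemented filter $\mathcal F(\mathcal A, A_\alpha)$. At stage $\alpha$ I will have a countable ideal independent family $\mathcal A_\alpha = \{A_\beta : \beta < \alpha\}$, together with countably many filters $\mathcal F_\beta$ ($\beta<\alpha$), each a $p$-point restricted to $A_\beta$ in the sense that its trace is a $p$-point on $\omega$, each containing $\mathcal F(\mathcal A_\alpha, A_\beta)$ — i.e. each $C_\beta^\xi$ meets $A_\beta \setminus \bigcup\mathcal F$ infinitely for every finite $\mathcal F \subseteq \mathcal A_\alpha \setminus \{A_\beta\}$ — and each disjoint from (the dual ideal of) $\mathcal U$ appropriately, meaning $A_\beta \in$ the dual ideal of $\mathcal U$ and every $X \in \mathcal U$ eventually (for large $\xi$) almost contains $C_\beta^\xi$. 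I also maintain a bookkeeping list of ``tasks'': ensuring $\mathcal U$-encompassing (clause (2)) and ensuring maximality of $\mathcal A$.

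At a successor step I handle $X_\alpha$: either $X_\alpha \in^* \bigcup\{A_\beta : \beta \in F\}$ for some finite $F$ (then $X_\alpha$ is already in the generated ideal and nothing is needed), or I must force $X_\alpha$ into some complemented filter. The standard move is to build a new $A_\alpha$ which ``captures'' $\omega \setminus X_\alpha$, i.e. $A_\alpha \setminus \bigcup\mathcal F \subseteq^* X_\alpha$ becomes available for suitable $\mathcal F$; concretely one takes a pseudo-intersection, inside $\mathcal U$, of the countably many filters $\mathcal F_\beta$ and of $\mathcal U$ itself, splits off pieces along $A_\alpha$, and defines $A_\alpha$ so that it is ideal independent over $\mathcal A_\alpha$, lies in the dual ideal of $\mathcal U$, and has $\mathcal F(\mathcal A_{\alpha+1}, A_\alpha)$ extendible to a $p$-point; this is exactly the place where one uses that $\mathcal U$ is a $p$-point (to get a pseudo-intersection inside $\mathcal U$ of the countably many relevant sets, keeping everything $\mathcal U$-positive) and that under $\mathsf{CH}$ one can enumerate the countably many potential ``destroyers'' of the $p$-point property for $\mathcal F(\mathcal A_{\alpha+1},A_\alpha)$ and diagonalize against them. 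One also has to simultaneously keep clause (2) alive: whenever a set $Y$ appears in $\mathcal U$ on the bookkeeping list, ensure that for all sufficiently late $\beta$ the base element $C_\beta^\xi$ is $\subseteq^* Y$; since $\mathcal U$ is a $p$-point and each new $A_\beta$ was chosen almost inside a fixed pseudo-intersection of previously listed $\mathcal U$-sets, this is automatic for the tail.

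The verification at the end splits into: (a) $\mathcal A$ is ideal independent — immediate since each successor step preserves it and limits of increasing ideal independent chains are ideal independent; (b) $\mathcal A$ is maximal — because every $X \in [\omega]^\omega$ was considered at some stage and either landed in the generated ideal or was put into some $\mathcal F(\mathcal A, A_\alpha)$, using Theorem \ref{char} or directly the characterization of maximality via complemented filters stated after Definition \ref{new}; (c) each $\mathcal F(\mathcal A, A_\alpha)$ is a $p$-point — because $\{C_\alpha^\xi : \xi<\omega_1\}$ is a base (by construction it is cofinal in $\mathcal F(\mathcal A, A_\alpha)$: any finite $\mathcal F$ appears at some stage and from then on the $C_\alpha^\xi$ are $\subseteq^* A_\alpha\setminus\bigcup\mathcal F$), it is $\subseteq^*$-decreasing, and the diagonalization ensured every countable subset has a pseudo-intersection in the filter; (d) $\mathcal U$-encompassing: clause (1) is maintained explicitly, clause (2) holds by the tail argument of the previous paragraph.

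The main obstacle, and the step deserving the most care, is the successor construction: one must simultaneously (i) keep the new set $A_\alpha$ ideal independent over all of $\mathcal A_\alpha$ (so its ``trace outside any finite union'' stays infinite), (ii) keep $A_\alpha$ in the dual ideal of $\mathcal U$ while (iii) arranging $\mathcal F(\mathcal A_{\alpha+1}, A_\alpha)$ to sit inside a $p$-point — these pull in tension, since making $A_\alpha$ small for $\mathcal U$ competes with making the complemented filter rich enough to be (extendible to) a $p$-point, and with not destroying the $p$-point property of the previously built $\mathcal F_\beta$. The resolution is to do all the pseudo-intersections inside $\mathcal U$ (legitimate because $\mathcal U$ is a $p$-point and only countably many sets are involved at each stage), to interleave the three requirements in the $\omega$-length bookkeeping inside stage $\alpha$, and to appeal to a fusion/diagonal argument exactly as in the construction of selective independent families; the earlier Lemma \ref{extension_lemma} and Theorem \ref{sol} provide the template for how a single ideal-independent set with a prescribed complemented filter can be adjoined.
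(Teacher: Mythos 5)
There is a genuine gap in the successor step, and it is exactly at the point you flag as ``the main obstacle.'' You propose to handle a positive $X_\alpha\notin\mathcal{U}$ by adjoining a new element $A_\alpha$ with $A_\alpha\setminus\bigcup\mathcal{F}\subseteq^* X_\alpha$ (i.e.\ forcing $X_\alpha$ into the new complemented filter), while at the same time securing clause (2) of $\mathcal{U}$-encompassing by keeping each new $A_\beta$ almost inside a pseudo-intersection $P_\beta\in\mathcal{U}$ of the previously listed $\mathcal{U}$-sets. These two demands are incompatible whenever $X_\alpha\cap P_\alpha$ is finite: every generator $A_\alpha\setminus\bigcup\mathcal{F}$ is an infinite set almost contained in $P_\alpha$, so it cannot also be almost contained in $X_\alpha$. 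Such $X_\alpha$ unavoidably occur, since $\omega\setminus X_\alpha\in\mathcal{U}$ for every $X_\alpha\notin\mathcal{U}$; for instance, any infinite subset of $\omega\setminus Y_0$ (with $Y_0$ an early-listed co-infinite $\mathcal{U}$-set) that is still positive over the family built so far. Exempting these stages from the containment in $P_\alpha$ does not help: the new $A_\alpha$ then becomes an exception for every $Y\in\mathcal{U}$ almost disjoint from $X_\alpha$, and since uncountably many positive subsets of $\omega\setminus Y_0$ may require treatment, $Y_0$ would accumulate uncountably many exceptions and clause (2) would fail.

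The missing idea is that a positive $X_\alpha\notin\mathcal{U}$ must be absorbed into the \emph{ideal} generated by the extended family, not into a complemented filter. The paper adds a \emph{pair} $A_0^\alpha, A_1^\alpha$, both containing $X_\alpha$ (so $X_\alpha$ lies in the generated ideal and can never be adjoined later) and with $A_0^\alpha\setminus A_1^\alpha$ and $A_1^\alpha\setminus A_0^\alpha$ contained in $Y_\alpha$; thus each new complemented filter contains a subset of $Y_\alpha$, which yields clause (2) for all elements added from some stage on. Sets in $\mathcal{U}$ require no action at all, since clause (2) itself already blocks them from being adjoined. A secondary gap: you assert that the tower $\{C_\alpha^\xi\}$ is a \emph{base} of $\mathcal{F}(\mathcal{A},A_\alpha)$, but you only argue that the tower elements lie below the generators; for them to lie \emph{in} the complemented filter you must, at each refinement, cover $A_\alpha\setminus C_\alpha^{\xi+1}$ by finitely many other members of $\mathcal{A}$. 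This is what condition (5) of the paper's recursion engineers via partitions of each $A_i^\alpha$, and that device also delivers ultrafilterness (through two-piece partitions), which a decreasing tower with pseudo-intersections alone does not provide.
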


\begin{proof}
Let $\mathcal{U}$ be a $p$-point and $\langle Y_\alpha:\alpha\in\omega_1\rangle$ be an $\subseteq^*$-decreasing sequence which generates the $p$-point $\mathcal{U}$. Let $\langle X_\alpha:\alpha\in\omega_1\rangle$ be an enumeration of $[\omega]^\omega$. By recursion we construct a sequence $\langle\mathcal{A}_{\alpha}:\alpha\in[\omega,\omega_1)\rangle$ such that:
\begin{enumerate}
    \item For all $\alpha$, $\mathcal{A}_\alpha\subseteq\mathcal{U}^*$ is a countable ideal independent family.
    \item For all $\alpha$, if $X_\alpha \notin \mathcal{A}_{\alpha+1}$ then $\mathcal{A}_{\alpha+1}\cup\{X_\alpha\}$ is not an ideal independent family.
    \item For all $\alpha$, $\mathcal{A}_{\alpha+1}=\mathcal{A}_\alpha$ or $\mathcal{A}_{\alpha+1}=\mathcal{A}_\alpha\cup\{A_0^\alpha,A_1^\alpha\}$ for some $A_0^\alpha,A_1^\alpha\in\mathcal{U}^*$ and such that $A_0^\alpha\setminus A_1^\alpha,A_1^\alpha\setminus A_0^\alpha\subseteq Y_\alpha$.
    \item If $\alpha$ is a limit ordinal, then $\mathcal{A}_\alpha=\bigcup_{\beta<\alpha}\mathcal{A}_\beta$.
    \item If $A_i^\alpha\in\mathcal{A}$ is added in step $\alpha$ of the iteration, $\langle \mathcal{P}^{\alpha,i}_\beta:\beta\in[\alpha+1,\omega_1)\rangle$ is the enumeration of all partitions of $A^\alpha_i$, for $i\in 2$, and for all $\alpha$ and $\beta>\alpha$, there are a finite $F\subseteq[\beta\setminus\{\alpha\}]^{<\omega}$ and $k\in\omega$, such that for any $i,j\in 2$ the set $A^\alpha_i\setminus\left(A_j^\beta\cup \bigcup F\right)\setminus k$:
    \begin{itemize}
    \item either is a partial selector of the partition $\mathcal{P}^{\alpha,i}_\beta$, 
    
    \item or is contained in one element of the partition $\mathcal{P}^{\alpha,i}_\beta$.
    \end{itemize}
\end{enumerate}
After the recursion we define $\mathcal{A}=\bigcup_{\alpha<\omega_1}\mathcal{A}_\alpha$. Condition (1) makes sure that $\mathcal{A}$ is an ideal independent family and (2) makes sure that $\mathcal{A}$ is maximal. Condition (3) makes sure that $\mathcal{A}$ is $\mathcal{U}$-encompassing. Condition (5) makes sure that the filters $\mathcal{F}(\mathcal{A},A)$ are selective ultrafilters for all $A\in\mathcal{A}$. We start by setting $\mathcal{A}_\omega=\langle A_n:n\in\omega\rangle$ be a partition of $\omega$ into infinitely many infinite sets. For each $n\in\omega$, let $A_0^n=A_1^n=A_n$ and let $\langle \mathcal{P}_\beta^{n,i}:\beta\in[\omega,\omega_1)\rangle$  enumerate all partitions of $A_n$. Assume $\mathcal{A}_\alpha$ has been constructed. We take care of the set $X_\alpha$ and define $\mathcal{A}_{\alpha+1}$.

If {\emph{$X_\alpha\in\mathcal{U}$}}, we just define $\mathcal{A}_{\alpha+1}=\mathcal{A}_\alpha$, and condition (2) from Definition \ref{encompassing} will make sure that $X_\alpha$ can not be added to the family $\mathcal{A}$.  If {\emph{$X_\alpha$ is in the ideal generated by the family $\mathcal{A}_\alpha$}} we have nothing to do and we can define $\mathcal{A}_{\alpha+1}=\mathcal{A}_\alpha$ again. {\emph{Otherwise}}, $X_\alpha\notin\mathcal{U}$ and $X_\alpha$ is positive relative to the ideal generated by $\mathcal{A}_\alpha$. Let $e_\alpha:\omega\to\mathcal{A}_\alpha$ be an enumeration of the elements of $\mathcal{A}_\alpha$, and define $C_0=e_\alpha(0), C_{n+1}=e_\alpha(n+1)\setminus\bigcup_{i\leq n}e_\alpha(i)$.

\medskip
If there are $n\in\omega$ and finite $F\subseteq\mathcal{A}_\alpha\setminus \{e_\alpha(0),\ldots,e_\alpha(n)\}$ such that $C_n\setminus X_\alpha\subseteq^*\bigcup F$ and $C_n\cap X_\alpha$ is infinite, then we have $C_n\setminus \bigcup F\subseteq^*C_n\cap X_\alpha\subseteq X_\alpha$, and we can define again $\mathcal{A}_{\alpha+1}=\mathcal{A}_\alpha$. 

\medskip
So let us assume that for all $n\in\omega$, $C_n\setminus X_\alpha$ is finite or $C_n\setminus X_\alpha$ is not covered by any $F\subseteq\mathcal{A}_\alpha\setminus \{e_\alpha(0),\dots,e_\alpha(n)\}$. Since $e_\alpha(n)$ is not almost contained in the union of finitely many elements from $\mathcal{A}_\alpha\setminus\{e_\alpha(n)\}$, and $\mathcal{A}_\alpha\setminus\{e_\alpha(n)\}$ is countable, by recursion we can construct an infinite set $B_n\subseteq e_\alpha(n)$, such that:
\begin{enumerate}
    \item $B_n\cap X_\alpha=\emptyset$, 
    \item for all $Z\in\mathcal{A}_\alpha$ different from $e_\alpha(n)$, we have $Z\cap B_n=^*\emptyset$,     
    \item  and moreover, by going to a subset if necessary, $B_n$ is a partial selector of the partition $\mathcal{P}^{\gamma,i}_\alpha$ or is completely contained in one element of the partition $\mathcal{P}^{\gamma,i}_\alpha$, where $\gamma$ and $i$ are such that $A_i^\gamma=e_\alpha(n)$    
    \end{enumerate}

Also, since for all $n\in\omega$, $C_n\notin \mathcal{U}$ and $\mathcal{U}$ is a $p$-point,  there is $A\in\mathcal{U}$ such that for all $n\in\omega$, $C_n\cap A$ is finite, $X_\alpha\cap A=\emptyset$ and $A\subseteq Y_\alpha$. Let $W_0,W_1$ be infinite disjoint subsets of $A$ which are not in the ultrafilter $\mathcal{U}$. Define $W$ as,
\begin{equation*}
    W=\left(\bigcup_{n\in\omega} C_n\setminus B_n\right)\setminus A
\end{equation*}
Finally, define $A_i^\alpha=W\cup W_i$, for $i\in 2$, and $\mathcal{A}_{\alpha+1}=\mathcal{A}_\alpha\cup\{A_0^\alpha,A_1^\alpha\}$. Note that $X_\alpha\subseteq A_i^\alpha$, $A_i^\alpha\setminus A_{1-i}^\alpha=W_i\subseteq Y_\alpha$. It remains to observe that by construction of $A_0^\alpha$ and $A_1^\alpha$ the family  $\mathcal{A}_{\alpha+1}$ is ideal independent.
\end{proof}

\begin{thm}\label{preserving_encompassing}
Let $\mathcal{U}$ be a $p$-point and let $\mathbb P$ be a proper, $\om^\om$-bounding forcing notion which preserves $p$-points. Then $\mathbb P$ preserves the maximality of any $\mathcal{U}$-encompassing maximal ideal independent family $\mathcal{A}$ such that for all $A\in\mathcal{A}$, the corresponding complemented filter $\mathcal{F}(\mathcal{A},A)$ is a $p$-point.
\end{thm}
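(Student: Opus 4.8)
The plan is to argue in $V[G]$, where $G$ is $\mathbb P$-generic, that $\mathcal A$ remains maximal ideal independent. Take an arbitrary $X \in ([\omega]^\omega)^{V[G]}$; we must show that either $X$ is in the ideal generated by $\mathcal A$, or $X \in \mathcal F(\mathcal A, A)$ for some $A \in \mathcal A$. Split into two cases according to whether $X \in \mathcal U'$, where $\mathcal U'$ is the $p$-point of $V[G]$ generated by $\mathcal U$ (which exists and is still a $p$-point since $\mathbb P$ preserves $p$-points). If $X \in \mathcal U'$, then $X \supseteq^* U$ for some $U \in \mathcal U$ (in $V$); by the $\mathcal U$-encompassing property (clause (2) of Definition~\ref{encompassing}, applied in $V$), the set of $A \in \mathcal A$ with $U \in \mathcal F(\mathcal A, A)$ is co-countable, hence nonempty, and then $X \supseteq^* U \in \mathcal F(\mathcal A, A)$ gives $X \in \mathcal F(\mathcal A, A)$. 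So the encompassing condition handles the ``large'' case essentially for free, and it is the reason we built $\mathcal A$ inside a $p$-point's dual ideal in Theorem~\ref{mainthm3}.

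The real work is the case $X \notin \mathcal U'$, equivalently $\omega \setminus X \in \mathcal U'$. The plan is to reduce to each complemented filter $\mathcal F(\mathcal A, A)$ separately. For a fixed $A \in \mathcal A$, let $\mathcal F_A = \mathcal F(\mathcal A, A)$, a $p$-point in $V$. The key point is $\omega^\omega$-bounding together with $p$-point preservation: I claim that $\mathcal F_A$ still generates a $p$-point $\mathcal F_A'$ in $V[G]$, and moreover for a fixed enumeration-based partition of $\omega$ into the ``layers'' $C_n = A \setminus \bigcup_{i<n} e(i)$ coming from a countable enumeration $e$ of the elements of $\mathcal A$ relevant to $A$ — more precisely the $B_n$-type refinement used in the construction — condition (5) of Theorem~\ref{mainthm3} guarantees a strong Ramsey-type absoluteness: every new partition of $A$ (coded in $V[G]$) is, modulo a finite union of members of $\mathcal A \setminus \{A\}$ and a finite set, either diagonalized or captured by $\mathcal F_A'$. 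Here $\omega^\omega$-bounding is what lets us find, in $V$, a single bound controlling the finitely-many-exceptions behaviour for a $V[G]$-partition; this is the standard mechanism by which selective/tight objects survive $\omega^\omega$-bounding forcings.

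So the argument runs: given $X \notin \mathcal U'$, consider for each $A \in \mathcal A$ the trace of $X$ on $A$. If for some $A$ we have $A \setminus \bigcup F \subseteq^* X$ for a finite $F \subseteq \mathcal A \setminus \{A\}$, then $X \in \mathcal F(\mathcal A, A)$ and we are done. Otherwise, for every $A$, $A \cap (\omega \setminus X)$ is ``fat'' relative to every such finite union; using that $\omega \setminus X \in \mathcal U'$ and invoking condition (5) (applied to the partition of $A$ induced by $X$, namely $\{A \cap X, A \setminus X\}$ or a refinement thereof), one shows $A \setminus X$ can be absorbed — i.e.\ $A \subseteq^* X \cup \bigcup F$ — so that, ranging over a cofinal/appropriate countable subfamily of $\mathcal A$, one assembles a finite $\mathcal F \in [\mathcal A]^{<\omega}$ with $X$ actually in the ideal generated by $\mathcal A$, contradiction-free and finishing maximality. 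The main obstacle I expect is exactly the bookkeeping in this last step: one must be careful that the ``finite $F$ and $k$'' promised by clause (5) for a partition living in $V[G]$ can genuinely be found, which is where the conjunction of properness (for reflecting the relevant countable data into $V$), $\omega^\omega$-bounding (for uniformizing the finite exceptions), and $p$-point preservation (to keep each $\mathcal F(\mathcal A,A)$ an ultrafilter, so that ``not diagonalized'' forces ``absorbed'') all get used simultaneously; isolating the right intermediate combinatorial statement — a preservation lemma saying that an $\mathcal A$ satisfying (5) stays maximal — and then verifying the two cases against it is the crux.
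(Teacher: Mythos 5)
Your first case ($X$ in the ultrafilter generated by $\mathcal U$ in the extension) is correct and is essentially how the paper disposes of it: such an $X$ almost contains some $U\in\mathcal U\cap V$, which by clause (2) of Definition~\ref{encompassing} lies in co-countably many complemented filters, hence so does $X$. The gap is in the second case. First, you appeal to condition (5) of the construction in Theorem~\ref{mainthm3} (the partition/selectivity clause), but that is a property of one particular family built under $\mathsf{CH}$, not a hypothesis of Theorem~\ref{preserving_encompassing}; the theorem assumes only that $\mathcal A$ is $\mathcal U$-encompassing and that each $\mathcal F(\mathcal A,A)$ is a $p$-point, so you are not entitled to it. Second, and more seriously, even granting it your sketch never produces the object you need: if $X$ is in none of the (preserved) ultrafilters generated by the $\mathcal F(\mathcal A,A)$, all you obtain is that for each $A$ separately $X\cap A$ is covered mod finite by finitely many other members of $\mathcal A$; ``assembling a finite $\mathcal F$ with $X$ in the ideal generated by $\mathcal A$'' from these local data is exactly the point at issue, and no mechanism is given --- a priori $X$ could be positive for the ideal yet in no complemented filter, which is precisely how maximality would fail.

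The missing idea is the paper's \emph{outer hull} argument, run in the ground model with names. Suppose $p$ forces $\dot X$ to be neither in the ideal generated by $\mathcal A$ nor in any $\mathcal F(\mathcal A,A)$. Since $\mathcal U$ remains a $p$-point, some $q\le p$ forces $\dot X\cap Z=\emptyset$ for a fixed $Z\in\mathcal U$. For $u\le p$ set $X_u=\{n: u\not\Vdash \check n\notin\dot X\}$; then $X_u$ is positive for the ideal, hence lies in some complemented filter by maximality of $\mathcal A$ in $V$. The crux is the claim that $X_u$ lies in \emph{uncountably} many complemented filters: if not, take a countable $M\prec H_\theta$ containing all the offending $A$'s, enumerate $\mathcal A\cap M=\{A_n\}_{n<\omega}$, use that each $\mathcal F(\mathcal A,A_n)$ is a preserved ultrafilter to get dense sets of conditions forcing $\dot X\cap A_n\setminus\bigcup_{i<k,\,i\ne n}A_i$ to be finite, and use properness together with $\omega^\omega$-bounding to uniformize the witnesses into ground-model functions $f,g$ below a single $(M,\mathbb P)$-generic $r\le u$; then $X_r\subseteq X_u$ avoids every $\mathcal F(\mathcal A,A_n)$ yet must itself lie in some $\mathcal F(\mathcal A,B)$, forcing $B\notin M$ and contradicting the choice of $M$. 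Applying the claim to $X_q$ and intersecting with the co-countably many filters containing $Z$ yields an $A$ with $Z\cap X_q\in\mathcal F(\mathcal A,A)$, so $Z\cap X_q$ is infinite, contradicting $q\Vdash\dot X\cap Z=\emptyset$. This is where $\omega^\omega$-bounding, properness and $p$-point preservation are used simultaneously, and it is absent from your proposal.
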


Note that this theorem implies that under $\mathsf{CH}$, in the generic extension by any proper $\om^\om$-bounding $p$-point preserving forcing notion $\mathfrak{s}_{mm}$ is $\aleph_1$.

\begin{proof}
Fix an ultrafilter $\mathcal{U}$, a $\mathcal{U}$-encompassing maximal ideal independent family $\mathcal{A}$ with the property that all of the complemented filters of $\mathcal{A}$ are $p$-points, and a proper, $\om^\om$-bounding, $p$-point preserving forcing notion $\mathbb P$. Let $p \in \mathbb P$ and let $\dot{X}$ be a name so that $p \forces \dot{X} \in [\omega]^\omega$. We need to show that some $q \leq p$ forces that $\dot{X}$ cannot be added to $\mathcal{A}$ without destroying ideal independence. More precisely, this means that we need to either find a $q \leq p$ so that $q$ forces that $\dot{X}$ is in the ideal generated by $\mathcal{A}$ or else find $q \leq p$ and an $A \in \mathcal{A}$ so that $q$ forces that $\dot{X}$ is in the complemented filter corresponding to $A$. 

Thus suppose towards a contradiction that $p$ forces that $\dot{X}$ is neither in the ideal generated by $\mathcal{A}$ nor in any filter $\mathcal{F}(\mathcal{A}, A)$ for any $A \in \mathcal{A}$. Note that this implies in particular that $\dot{X}$ is not in $\mathcal{U}$ since if it were, then in would be in some filter $\mathcal{F}(\mathcal{A}, A)$ (in fact co-countably many). Since $\mathbb P$ preserves $\mathcal{U}$ being a $p$-point it follows that $p$ forces that the complement of $\dot{X}$ is in $\mathcal{U}$ and therefore we can find a $q \leq p$ and a $Z \in \mathcal{U}$ so that $q \forces \dot{X} \cap Z = \emptyset$. Fix such a $q$ and $Z$. To complete the proof it suffices to therefore show that some $r \leq q$ forces that $n \in \dot{X}$ for some $n \in Z$. 

For any $u \in \mathbb P$ let $X_u = \{n\in \omega \; | \; u \nVdash \check{n} \notin \dot{X}\}$ be the {\em outer hull} of $\dot{X}$ with respect to $u$, i.e. the collection of $n < \omega$ forced to be in $\dot{X}$ by some $u' \leq u$. Note that $u \forces \dot{X} \subseteq \check{X}_u$ for any $u \in \mathbb P$. It follows that for any condition $u$ stronger than $p$, $X_u$ is not in the ideal generated by $\mathcal{A}$. By the maximality of $\mathcal{A}$, moreover we get that for every $r \leq q$ the set $X_r$ is in some complemented filter of $\mathcal{A}$. Therefore to finish the proof it suffices to show that in fact any such $X_r$ is actually in uncountably many such filters. This suffices since if this is the case then in particular it applies to $X_q$ and, since, by the definition of $\mathcal{U}$-encompassing, $Z$ is in $\mathcal{F}(\mathcal{A}, A)$ for co-countably many $A \in \mathcal{A}$, there is some $A \in \mathcal{A}$ so that $Z \cap X_q \in \mathcal{F}(\mathcal{A}, A)$ and so $Z \cap X_q$ has infinite intersection. Thus, some $r \leq q$ forces that $n \in \dot{X}$ for some (in fact infinitely many) $n \in Z$. Summing up, it suffices to show the following claim.

\begin{claim}
For any $u \in \mathbb P$ stronger than $p$ the set $X_u \in \mathcal{F}(\mathcal{A}, A)$  for uncountably many $A \in \mathcal{A}$.
\end{claim}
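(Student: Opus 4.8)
The plan is to argue by contradiction from the assumption that $X_u$ lies in only countably many complemented filters, and to use the $p$-point property of the filters $\mathcal{F}(\mathcal{A},A)$ together with the $\om^\om$-bounding hypothesis to build a real in $V$ that should belong to $X_u$ yet cannot, contradicting $u \le p$ and the choice of $p$. More precisely, suppose $X_u \in \mathcal{F}(\mathcal{A},A)$ for exactly the $A$ in some countable set $\{A_n : n \in \om\} \subseteq \mathcal{A}$. By the maximality of $\mathcal{A}$ and the observation following Definition~\ref{new}, every $Y \in [\om]^\om$ that is in $\mathcal{A}^+$ is in some $\mathcal{F}(\mathcal{A},A)$; since $X_u \in \mathcal{A}^+$ for $u \le p$, we know there is at least one such $A$. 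The first step is therefore to pin down the structure of the countable set of witnesses and to set up the combinatorial obstruction.

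The key step is to diagonalize against $\{A_n : n \in \om\}$. Since $X_u \in \mathcal{F}(\mathcal{A},A_n)$ for each $n$, and each $\mathcal{F}(\mathcal{A},A_n)$ is a $p$-point, we can first pass to a pseudo-intersection $W_n \in \mathcal{F}(\mathcal{A},A_n)$ of the countably many generators $\{A_n \setminus \bigcup \mathcal{B} : A_n \notin \mathcal{B} \in [\mathcal{A}]^{<\om}\}$ that have been ``activated'' so far, and then, using that the $p$-point is in particular a filter with a $\subseteq^*$-decreasing base, thin $X_u$ on each $A_n$. The idea is to produce, inside $V$, a set $Y \subseteq X_u$ which is $\mathcal{A}$-positive but which meets no $\mathcal{F}(\mathcal{A},A)$-generator for $A \notin \{A_n\}$; by maximality $Y$ must then be in some $\mathcal{F}(\mathcal{A},A_{n_0})$, and we arrange the thinning so that simultaneously $Y \cap A_{n_0}$ is, mod finite and mod a finite union from $\mathcal{A}$, too small to be in $\mathcal{F}(\mathcal{A},A_{n_0})$ — this is where the selective/$p$-point strength of the complemented filters is used, exactly as in the construction of Theorem~\ref{mainthm3}, where each $A_i^\alpha \setminus (A_j^\beta \cup \bigcup F)$ is forced to be a partial selector of or contained in a prescribed partition. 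The $\om^\om$-bounding hypothesis enters to ensure that the functions describing ``how fast'' $\dot X$ is enumerated by conditions below $u$ are dominated by a ground-model function, so that the outer hull $X_u$ can be partitioned in $V$ into finite blocks in a way compatible with the partitions $\mathcal{P}^{\gamma,i}_\beta$ fixed in the construction of $\mathcal{A}$; this is what lets us reduce the whole argument to a ground-model combinatorial statement about $\mathcal{A}$ alone.

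Concretely I would: (i) fix the countable set of putative witnesses $\{A_n\}$ and the co-countably many $A$ for which $X_u \notin \mathcal{F}(\mathcal{A},A)$, together with, for each such $A$, a finite $\mathcal{B}_A \in [\mathcal{A}]^{<\om}$ and a finite set witnessing $A \setminus \bigcup \mathcal{B}_A \subseteq^* \om \setminus X_u$ would be the wrong direction — instead witnessing that $X_u$ misses the generator, i.e. $(A \setminus \bigcup \mathcal{B}_A) \cap X_u$ is finite or the generator is not $\subseteq^* X_u$; (ii) use $\om^\om$-bounding to find in $V$ a single interval partition coarse enough that $X_u$ restricted to the relevant traces looks like a ground-model set relative to these data; (iii) invoke the selectivity of $\mathcal{F}(\mathcal{A},A_n)$ to run the same fusion as in Theorem~\ref{mainthm3} and extract $Y \in V$, $Y \subseteq^* X_u$, with $Y \in \mathcal{A}^+$ but $Y \notin \mathcal{F}(\mathcal{A},A)$ for \emph{every} $A \in \mathcal{A}$, contradicting the maximality of $\mathcal{A}$ in $V$; (iv) conclude that the set of $A$ with $X_u \in \mathcal{F}(\mathcal{A},A)$ must be uncountable. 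The main obstacle will be step (iii): organizing the fusion so that the single extracted set $Y$ simultaneously kills membership in all countably many candidate filters $\mathcal{F}(\mathcal{A},A_n)$ while staying positive — this is precisely where condition (5) in the construction of $\mathcal{A}$ (every $A_i^\alpha \setminus (A_j^\beta \cup \bigcup F)$ being a partial selector of, or contained in, the $\beta$-th partition) was tailored, and the delicate point is to match the partition bookkeeping with the interval partition coming from $\om^\om$-bounding so that the reduction to $V$ is legitimate.
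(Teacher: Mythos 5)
There is a genuine gap. Your step (iii) asks for a ground-model set $Y\subseteq^* X_u$ which is $\mathcal{A}$-positive yet lies in no complemented filter; by the maximality of $\mathcal{A}$ no such $Y$ exists, so the only way to reach this contradiction is to actually produce $Y$ from the hypothesis that $X_u$ lies in only countably many filters — and your sketch gives no mechanism for that. The thinning you describe cannot get off the ground: $\mathcal{F}(\mathcal{A},A_n)$ has \emph{uncountably} many generators $A_n\setminus\bigcup\mathcal{B}$ when $\mathcal{A}$ is uncountable, so ``a pseudo-intersection of the generators'' is not available from the $p$-point property, and in any case removing parts of $X_u$ to leave the countably many filters $\mathcal{F}(\mathcal{A},A_n)$ gives no control over whether the result stays positive. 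Moreover your appeal to condition (5) and the partitions $\mathcal{P}^{\gamma,i}_\beta$ from Theorem \ref{mainthm3} is out of scope: the claim must hold for an \emph{arbitrary} $\mathcal{U}$-encompassing family whose complemented filters are $p$-points, not only for the particular family built there, so the argument cannot lean on that construction's bookkeeping.

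What the argument actually needs — and what your proposal never uses — is the standing contradiction hypothesis that $p$ forces $\dot{X}\notin\mathcal{F}(\mathcal{A},A)$ for \emph{every} $A$, together with the hypothesis that $\mathbb{P}$ preserves $p$-points. Preservation turns ``$\dot X\notin\mathcal{F}(\mathcal{A},A_n)$ in the extension'' into a ground-model witness: densely many conditions below $u$ force $\dot{X}\cap\bigl(A_n\setminus\bigcup_{i<k_n,\,i\neq n}A_i\bigr)$ to be finite. Taking a countable $M\prec H_\theta$ containing all $A$ with $X_u\in\mathcal{F}(\mathcal{A},A)$, enumerating $\mathcal{A}\cap M=\{A_n\}$, and using properness plus $\om^\om$-bounding, one finds a single $(M,\mathbb{P})$-generic $r\leq u$ and ground-model $f,g$ with $r\forces \dot{X}\cap A_n\setminus\bigcup_{i<f(n),\,i\neq n}A_i\subseteq g(n)$ for all $n$; hence the \emph{smaller outer hull} $X_r$ avoids every $\mathcal{F}(\mathcal{A},A_n)$. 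Since $X_r$ is still positive it lies in some $\mathcal{F}(\mathcal{A},B)$ with $B\neq A_n$ for all $n$, and upward closure of filters together with $X_r\subseteq X_u$ forces $X_u\in\mathcal{F}(\mathcal{A},B)$, contradicting the choice of the $A_n$. In short, the shrinking must happen by passing to a stronger condition and its outer hull, not by ground-model combinatorics on $X_u$ alone; your plan omits exactly the forcing-theoretic ingredients (the name $\dot X$, ultrafilter preservation, the elementary submodel) that make this work.
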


Fix such a $u \in \mathbb P$ and suppose towards a contradiction that there were only countably many $A \in \mathcal{A}$ with $X_u \in \mathcal{F}(\mathcal{A}, A)$. Let $M \prec H_\theta$ be a countable model for $\theta$ sufficiently large with $\mathbb{P}, p, u, \mathcal{A}, \mathcal{U} \in M$ containing every $A$ so that $X_u \in \mathcal{F}(\mathcal{A}, A)$. Enumerate $\mathcal{A} \cap M$ as $\{A_i \;  | \; i < \omega\}$. Since $p$, and hence $u$, forces that for each $n < \omega$ the name $\dot{X}$ is not in $\mathcal{F}(\mathcal{A}, A_n)$, and each one of such filters is a $p$-point by our assumption and hence an ultrafilter preserved by $\mathbb P$, there is in $M$ a dense set of conditions below $u$ forcing that $\dot{X} \cap A_n \setminus \bigcup_{i < k_n, i \neq n} A_i$ is finite for some $k_n \in \omega$. Applying $\om^\om$-boundedness and properness we can find in the ground model functions $f, g \in \om^\om$ and a condition $r \leq u$ which is $(M, \mathbb{P})$-generic so that for each $n < \omega$ we have 

$$r \forces \dot{X} \cap A_n \setminus \bigcup_{i < \check{f}(n), i \neq n} A_i \subseteq \check{g}(n)$$

In particular we get that $X_r \cap A_n \setminus \bigcup_{i < f(n), i \neq n} A_i \subseteq g(n)$ and thus $X_r \notin \mathcal{F}(\mathcal{A}, A_n)$ for any $n < \omega$. But then, by applying the same argument to $r$ that we applied to $u$, we get that $X_r$ is in some $\mathcal{F}(\mathcal{A}, B)$ for some $B \in \mathcal{A}$ with $B \neq A_n$ for any $n < \omega$. This is a contradiction however since $X_r \subseteq X_u$ and by definition of the $A_n$'s $X_u \notin \mathcal{F}(\mathcal{A}, B)$.  This contradiction implies that $X_q$ is in uncountably many complemented filters of $\mathcal{A}$ and hence the proof is complete.
\end{proof}

As an straightforward corollary we obtain:
\begin{crl}\hfill
\begin{enumerate}
    \item $\mathfrak{s}_{mm} = \aleph_1$ in the Sacks model.
    \item $\mathfrak{s}_{mm} = \aleph_1$ in the Miller partition model and hence $\mathfrak{s}_{mm} < \mathfrak{a}_T$ is consistent.
    \item $\mathfrak{s}_{mm} = \aleph_1$ in the $h$-perfect tree forcing model and hence $\mathfrak{s}_{mm} < \hbox{non}(\mathcal{N})$ 
is consistent.
\end{enumerate}
\end{crl}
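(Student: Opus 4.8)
The plan is to derive all three equalities $\mathfrak{s}_{mm}=\aleph_1$ as instances of Theorem~\ref{preserving_encompassing}, applied once to the full forcing that produces each model, and then to read off the two consistency statements from the known values of $\mathfrak a_T$ and $\hbox{non}(\mathcal N)$. Each of the three models arises as a countable support iteration of length $\omega_2$ of the relevant tree forcing over a ground model $V\models\mathsf{GCH}$; write $\mathbb P$ for this iteration. Working in $V$, where $\mathsf{CH}$ holds, I would fix a $p$-point $\mathcal U$ and invoke Theorem~\ref{mainthm3} to produce a $\mathcal U$-encompassing maximal ideal independent family $\mathcal A$ of cardinality $\aleph_1$, all of whose complemented filters $\mathcal F(\mathcal A,A)$ are $p$-points. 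It then suffices to show that $\mathcal A$ stays maximal in $V^{\mathbb P}$: since $\mathfrak{s}_{mm}\geq\aleph_1$ always (for instance because $\mathfrak d\leq\mathfrak{s}_{mm}$) while $\mathfrak c=\aleph_2$ in each model, the maximality of this $\aleph_1$-sized family yields exactly $\mathfrak{s}_{mm}=\aleph_1$.

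The core of the argument is to check that $\mathbb P$ itself is proper, $\om^\om$-bounding and $p$-point preserving, so that Theorem~\ref{preserving_encompassing} applies verbatim to $\mathbb P$ and $\mathcal A$. I would reduce this to the single-step case using the standard preservation theory for countable support iterations of proper forcing: properness passes to countable support limits, the $\om^\om$-bounding property is preserved under countable support iteration of proper $\om^\om$-bounding forcings, and preservation of $p$-points is likewise preserved under countable support iteration. For the individual iterands, Sacks forcing, the Miller partition forcing and the $h$-perfect tree forcing are each proper, $\om^\om$-bounding and $p$-point preserving, for which I would cite the relevant references; here it is worth emphasising that the partition variant of Miller forcing is $\om^\om$-bounding, in contrast to ordinary superperfect tree forcing, which adds an unbounded real and to which the theorem could not apply. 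Granting these inputs, $\mathbb P$ meets all three hypotheses of Theorem~\ref{preserving_encompassing}, and since $\mathcal U$ and the filters $\mathcal F(\mathcal A,A)$ are $p$-points preserved by $\mathbb P$, the theorem gives that $\mathcal A$ is still maximal ideal independent in $V^{\mathbb P}$. This proves (1) together with the equalities $\mathfrak{s}_{mm}=\aleph_1$ in (2) and (3).

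For the consistency separations I would combine these equalities with the established values of the other invariants in the respective models. In the Miller partition model $\mathfrak a_T=\mathfrak c=\aleph_2$, so $\mathfrak{s}_{mm}=\aleph_1<\aleph_2=\mathfrak a_T$, which gives the consistency of $\mathfrak{s}_{mm}<\mathfrak a_T$; and in the $h$-perfect tree model $\hbox{non}(\mathcal N)=\aleph_2$, so $\mathfrak{s}_{mm}=\aleph_1<\aleph_2=\hbox{non}(\mathcal N)$, which gives the consistency of $\mathfrak{s}_{mm}<\hbox{non}(\mathcal N)$. Both evaluations are quoted from the papers introducing these forcings.

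The step I expect to be the main obstacle is the passage from the single-step Theorem~\ref{preserving_encompassing} to the length-$\omega_2$ iteration $\mathbb P$: one must ensure that all three properties survive countable support limits, and in particular that the ground-model $p$-points $\mathcal U$ and $\mathcal F(\mathcal A,A)$ continue to generate $p$-point ultrafilters throughout the iteration, which is exactly the content of the $p$-point preservation theorem for countable support iterations. The remaining ingredients — that each named tree forcing has the three properties, and the computed values of $\mathfrak a_T$ and $\hbox{non}(\mathcal N)$ — are standard and drawn from the literature.
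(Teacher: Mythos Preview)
Your proposal is correct and follows essentially the same approach as the paper: apply Theorems~\ref{mainthm3} and~\ref{preserving_encompassing} once the full iteration is known to be proper, $\omega^\omega$-bounding and $p$-point preserving, then quote the known values of $\mathfrak a_T$ and $\hbox{non}(\mathcal N)$. The paper's proof is terser, simply citing that each iterated forcing has the three properties and the relevant invariant values, while you spell out explicitly the reduction to single steps via countable support iteration preservation theorems; but the substance is the same.
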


\begin{proof}
For (1), it is a standard fact that the iterated Sacks forcing preserves $p$-points and it is $\omega^\omega$-bounding. For (2), in \cite{miller_partition_forcing}, Miller has constructed a forcing, known as Miller partition forcing,  which makes the cardinal invariant $\mathfrak{a}_T$ equal to $\aleph_2$, as recently shown in \cite{JCVFOGJS} preserves $p$-points, and as shown in 
\cite{spinas_partition_miller}) is $\omega^\omega$-bounding. For (3) recall, that the $h$-perfect tree forcing is proper, $^\omega\omega$-bounding, preserves $p$-points and that in the $h$-perfect tree forcing model $\hbox{non}(\mathcal{N})=\aleph_2$, see \cite[Section 2]{goldstern-judah-shelah}. 
\end{proof}

An alternation of Miller partition and $h$-perfect tree forcings will lead to a model of $\mathfrak{i}=\mathfrak{s}_{mm}<\hbox{non}(\mathcal{N})=\mathfrak{a}_T=\aleph_2$
(for the effect of the respective posets on $\mathfrak{i}$ see \cite{JCVFOGJS} and \cite{CS}).

\begin{crl}
$\mathfrak{s}_{mm}$ is independent of $\mathfrak{a}_T$
\end{crl}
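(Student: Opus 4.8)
The plan is to establish the consistency of $\mathfrak{s}_{mm}<\mathfrak{a}_T$ and of $\mathfrak{a}_T<\mathfrak{s}_{mm}$ separately, thus obtaining independence. For the first direction, I would invoke item (2) of the preceding corollary: in the Miller partition model $\mathfrak{s}_{mm}=\aleph_1$ while $\mathfrak{a}_T=\aleph_2$, so $\mathfrak{s}_{mm}<\mathfrak{a}_T$ is consistent. For the second direction, I would use the remark made immediately after the corollary, namely that an alternation of Miller partition forcing and $h$-perfect tree forcing produces a model of $\mathfrak{i}=\mathfrak{s}_{mm}<\hbox{non}(\mathcal N)=\mathfrak{a}_T=\aleph_2$; in that model $\mathfrak{a}_T=\aleph_2>\aleph_1=\mathfrak{s}_{mm}$, giving $\mathfrak{s}_{mm}<\mathfrak{a}_T$ again. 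So to get the reverse inequality I would instead appeal to Theorem~\ref{mainthm1}, $\mathfrak{u}\leq\mathfrak{s}_{mm}$, together with the known consistency of $\mathfrak{a}_T<\mathfrak{u}$.

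More concretely, for $\mathfrak{a}_T<\mathfrak{s}_{mm}$ the cleanest route is: take any model where $\mathfrak{a}_T=\aleph_1<\aleph_2=\mathfrak{u}=\mathfrak{c}$ (for instance, a model obtained by a suitable ccc iteration, or by noting that $\mathfrak{a}_T$ can be small while $\mathfrak{u}$ is large — this separation is available in the literature on tower-type invariants and $\mathfrak{a}_T$). By Theorem~\ref{mainthm1} we then have $\mathfrak{s}_{mm}\geq\mathfrak{u}=\aleph_2>\aleph_1=\mathfrak{a}_T$. Combining this with the Miller partition model from item (2) of the corollary, both strict inequalities $\mathfrak{s}_{mm}<\mathfrak{a}_T$ and $\mathfrak{a}_T<\mathfrak{s}_{mm}$ are consistent, so $\mathfrak{s}_{mm}$ and $\mathfrak{a}_T$ are independent.

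The main obstacle I anticipate is locating and citing a model that simultaneously witnesses $\mathfrak{a}_T<\mathfrak{u}$ (or, more precisely, $\mathfrak{a}_T<\mathfrak{s}_{mm}$ directly) — one must verify that the forcing used keeps $\mathfrak{a}_T$ small while blowing up $\mathfrak{u}$, since $\mathfrak{a}_T$ is somewhat delicate under iteration. If such a model is not readily citable, the fallback is to observe that a maximal ideal independent family of size $\aleph_1$ cannot exist when $\mathfrak{u}>\aleph_1$, and then use any standard model with $\mathfrak{u}=\mathfrak{c}=\aleph_2$ in which $\mathfrak{a}_T=\aleph_1$ is forced by an appropriate preservation theorem for tree-almost-disjoint families. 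The other direction, $\mathfrak{s}_{mm}<\mathfrak{a}_T$, is immediate from the corollary and requires no further work.

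\begin{proof}
By item (2) of the previous corollary, in the Miller partition model $\mathfrak{s}_{mm}=\aleph_1<\aleph_2=\mathfrak{a}_T$, so $\mathfrak{s}_{mm}<\mathfrak{a}_T$ is consistent. For the reverse inequality, consider a model in which $\mathfrak{a}_T=\aleph_1$ while $\mathfrak{u}=\mathfrak{c}=\aleph_2$. By Theorem~\ref{mainthm1} we then have $\mathfrak{s}_{mm}\geq\mathfrak{u}=\aleph_2>\aleph_1=\mathfrak{a}_T$, so $\mathfrak{a}_T<\mathfrak{s}_{mm}$ is also consistent. Hence $\mathfrak{s}_{mm}$ and $\mathfrak{a}_T$ are independent.
\end{proof}
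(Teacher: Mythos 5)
Your argument is correct and follows the same route as the paper: the Miller partition model gives the consistency of $\mathfrak{s}_{mm}<\mathfrak{a}_T$, and the inequality $\mathfrak{u}\leq\mathfrak{s}_{mm}$ of Theorem~\ref{mainthm1} reduces the other direction to the consistency of $\mathfrak{a}_T<\mathfrak{u}$. The one missing ingredient you flag — a citable model with $\mathfrak{a}_T$ small and $\mathfrak{u}$ large — is supplied in the paper by the random real model, where it is well known that $\mathfrak{a}_T<\mathfrak{u}$ holds (e.g.\ since $\mathfrak{a}_T\leq\mathrm{non}(\mathcal{M})=\aleph_1$ there while $\mathfrak{u}\geq\mathfrak{r}=\mathfrak{c}=\aleph_2$); no bespoke iteration or preservation theorem is needed.
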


\begin{proof}
In the Miller partition model, $\mathfrak{s}_{mm} < \mathfrak{a}_T$. On the other hand, it is well known that $\mathfrak{a}_T < \mathfrak{u}$ holds in the Random model and hence $\mathfrak{a}_T < \mathfrak{s}_{mm}$ holds in that model as well.
\end{proof}

\section{Ideal Mr\'owka Spaces}\label{idealMrowka}

To each ideal independent family $\A$ we can associate a topological space $\psi(\A)$ defined as follows.

\begin{definition*}
Let $\A$ be an ideal independent family. The space $\psi(\A)$ is the set $\omega\cup\A$ endowed with the topology $\tau$ which satisfies the following condition:
\begin{itemize}
    \item a set $U\in \tau$ if and only if for any $A\in \A\cap U$ there exists $F\in \F(\A,A)$ such that $F\subseteq U$. 
\end{itemize}
\end{definition*}

It is easy to see that the set $\omega$ is a dense discrete subset of $\psi(\A)$ and for any $A\in\A$ the family $\{F\cup\{A\}:F\in\F(\A,A)\}$ forms an open neighborhood base at $A$ in $\psi(\A)$. The set $\A\subseteq \psi(\A)$ is closed and discrete. For any $A\in\A$ the set $\{A\}\cup A$ is a clopen neighborhood of $A$. Thus, we obtain the following:

\begin{lemma}\label{topol}
For any ideal independent family $\A$ the space $\psi(\A)$ is Tychonoff zero-dimensional separable scattered (of height $2$) space.
\end{lemma}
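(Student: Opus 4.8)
The plan is to verify each topological property of $\psi(\A)$ directly from the description of its topology and the basic neighborhood bases identified just before the statement. First I would confirm that the topology is well-defined: a set is open iff it contains a basic neighborhood of each of its non-integer points, and the basic neighborhoods are $\{F \cup \{A\} : F \in \F(\A,A)\}$ together with all singletons $\{n\}$ for $n \in \omega$; these are closed under finite intersection (using that $\F(\A,A)$ is a filter) so they genuinely form a base. The point $n \in \omega$ is isolated since $\{n\}$ is open, so $\omega$ is a discrete subspace, and it is dense because every nonempty basic open set meets $\omega$ (each $F \in \F(\A,A)$ is an infinite subset of $\omega$). This also gives separability, as $\omega$ is a countable dense set.

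Next I would check the separation and dimension properties. Each basic neighborhood $\{A\} \cup A$ of $A \in \A$ is clopen: it is open (it contains $A \setminus \bigcup\mathcal B \in \F(\A,A)$ for $\mathcal B = \emptyset$, witnessing the condition at $A$), and its complement is open since it contains each of its integer points as singletons and, for any $A' \neq A$ in the complement, the basic neighborhood $\{A'\} \cup (A' \setminus A)$; here one uses that $A' \setminus A \in \F(\A, A')$ because $A \cap A'$ can be absorbed into the finite union defining elements of $\F(\A,A')$ — precisely, $A' \setminus A \supseteq A' \setminus (\text{finite union including } A)$, but more carefully, since $\A$ is ideal independent, $A' \setminus A$ is infinite and contains $A' \setminus \bigcup\{A\}$, which lies in $\F(\A,A')$. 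Since sets of the form $\{n\}$ and $\{A\} \cup (A \setminus \bigcup\mathcal B)$ form a base of clopen sets — the latter being clopen by a similar argument, being a finite Boolean combination of clopen sets $\{A\} \cup A$ and the $A_i \cup \{A_i\}$ — the space is zero-dimensional, hence Tychonoff (it is also clearly $T_1$, since $\A$ is closed discrete and points of $\omega$ are isolated).

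Then I would address scatteredness and the height. The subspace $\omega$ consists entirely of isolated points of $\psi(\A)$, so the Cantor--Bendixson derivative $\psi(\A)'$ is contained in $\A$; conversely each $A \in \A$ is a limit of the infinite set $A \subseteq \omega$, so $\psi(\A)' = \A$ exactly. Since $\A$ is closed and discrete in $\psi(\A)$ — each $A$ has the neighborhood $\{A\} \cup A$ meeting $\A$ only in $A$ — every point of $\A$ is isolated in the subspace $\A$, so $\psi(\A)'' = \emptyset$. Hence the Cantor--Bendixson rank is $2$: $\psi(\A)^{(0)} = \psi(\A)$, $\psi(\A)^{(1)} = \A$, $\psi(\A)^{(2)} = \emptyset$, which is exactly the meaning of ``scattered of height $2$''.

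The only genuinely nontrivial point is verifying that for distinct $A, A' \in \A$ the set $A' \setminus A$ (or more generally $A' \setminus \bigcup_{i<k} A_i$ for finitely many $A_i \neq A'$) belongs to $\F(\A, A')$, which is what makes $\{A\} \cup A$ clopen and gives closed-discreteness of $\A$; this is immediate from the definition of $\F(\A,A')$ as the filter generated by the sets $A' \setminus \bigcup\mathcal B$ over finite $\mathcal B \subseteq \A \setminus \{A'\}$, together with the fact that ideal independence guarantees all these generators are infinite so the filter is proper. Everything else is a routine unwinding of definitions, so I would present the argument compactly, grouping the clopen-base verification into a single paragraph and then reading off zero-dimensionality, the Tychonoff property, separability, and scatteredness of height $2$ as consequences.
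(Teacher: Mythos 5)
Your proof is correct and follows essentially the same route as the paper, which obtains the lemma directly from the observations that $\omega$ is an open dense discrete subset, that the sets $F\cup\{A\}$ (and in particular $\{A\}\cup A$) form clopen neighbourhood bases at the points of $\A$, and that $\A$ is closed and discrete, so that the Cantor--Bendixson analysis gives height $2$. The one point worth making explicit is that closedness of the singletons $\{n\}$ for $n\in\omega$ (needed for $T_1$, which your parenthetical does not quite justify by isolatedness alone) relies on the standing convention that the filters $\F(\A,A)$ contain the cofinite subsets of $A$; granting that, your argument goes through verbatim.
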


Observe that for any almost disjoint family $\A$ the space $\psi(\A)$ is the well-known Mr\'{o}wka space corresponding to the almost disjoint family $\A$.


\begin{lemma}
Let $\A$ be an ideal independent family. If $\A$ is maximal, then each $C\in \A^+$ has an accumulation point in $\psi(\A)$. Moreover, if all but finitely many $\F(\A,A)$, $A\in \A$  are ultrafilters, then $\A$ is maximal, whenever  each $C\in \A^+$ has an accumulation point.
\end{lemma}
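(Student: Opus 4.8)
The plan is to prove the two implications separately, both going through the standard dictionary between accumulation points in $\psi(\A)$ and the complemented filters $\F(\A,A)$. First I record the basic observation that a set $C\in[\w]^\w$ accumulates to a point $A\in\A$ in $\psi(\A)$ if and only if $C$ meets every $F\in\F(\A,A)$ in an infinite set, equivalently $C\in\F(\A,A)^+$ (using that $\{F\cup\{A\}:F\in\F(\A,A)\}$ is a neighborhood base at $A$, and that $\w$ is discrete so no point of $\w$ can be an accumulation point of an infinite subset of $\w$). So "$C$ has an accumulation point in $\psi(\A)$" is the same as "$C\in\F(\A,A)^+$ for some $A\in\A$."

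For the first implication, suppose $\A$ is maximal and fix $C\in\A^+$. By the remark following Definition~\ref{new}, maximality gives that $C$ is either in the ideal generated by $\A$ — impossible since $C\in\A^+$ — or $C\in\F(\A,A)$ for some $A\in\A$; in the latter case $C\in\F(\A,A)\subseteq\F(\A,A)^+$ (a filter is contained in its own coideal since filters are proper), so by the observation $A$ is an accumulation point of $C$ in $\psi(\A)$. That handles the "moreover"-free direction.

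For the converse, assume all but finitely many of the filters $\F(\A,A)$, $A\in\A$, are ultrafilters, and assume every $C\in\A^+$ has an accumulation point in $\psi(\A)$; we must show $\A$ is maximal, i.e. that there is no $C\in\A^+\setminus\A$ with $\A\cup\{C\}$ ideal independent. Suppose $C\in\A^+$ witnesses non-maximality. By hypothesis $C$ accumulates to some $A\in\A$, so $C\in\F(\A,A)^+$. Let $A_1,\dots,A_k$ enumerate the finitely many elements of $\A$ whose complemented filter is not an ultrafilter. The point is that $\A\cup\{C\}$ being ideal independent forces, for each $A'\in\A$, that $A'\setminus(C\cup\bigcup\mathcal F)$ is infinite for every finite $\mathcal F\subseteq\A\setminus\{A'\}$, i.e. $A'\setminus C\in\F(\A,A')^+$ — so $C$ cannot contain any set of $\F(\A,A')$, hence $C\notin\F(\A,A')$. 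If $\F(\A,A)$ is an ultrafilter, then $C\in\F(\A,A)^+$ already gives $C\in\F(\A,A)$, a contradiction; so necessarily $A\in\{A_1,\dots,A_k\}$. Thus $C$ can only accumulate at the finitely many non-ultrafilter points $A_1,\dots,A_k$. To finish I would split $C$ into infinitely many pairwise disjoint infinite pieces $C=\bigsqcup_{m\in\w}C_m$; each $C_m\subseteq C$ is still in $\A^+$ (since $\A^+$ is closed under taking infinite subsets — no infinite subset of an ideal-positive set is in the ideal) and $\A\cup\{C_m\}$ is still ideal independent (the witnessing inequalities only get easier when $C$ shrinks), so each $C_m$ accumulates to some $A_{j(m)}\in\{A_1,\dots,A_k\}$; by pigeonhole some single $A_j$ has $C_m\in\F(\A,A_j)^+$ for infinitely many $m$, and since the $C_m$ are disjoint this contradicts $\F(\A,A_j)$ being a filter — indeed a filter $\mathcal G$ cannot have two disjoint sets in $\mathcal G^+$ whose union is still $\mathcal G$-... wait, more carefully: I would instead use that since $\F(\A,A_j)$ is not an ultrafilter there is $D\in\F(\A,A_j)^+$ with $A_j\setminus D\in\F(\A,A_j)^+$ too, and partition only into two pieces; the cleanest route is: pick any $C\in\A^+$ witnessing non-maximality, it accumulates at some non-ultrafilter $A_j$, write $C=C_0\sqcup C_1$ with both infinite, at least one $C_i\in\A^+$ with $\A\cup\{C_i\}$ ideal independent and accumulating again only at non-ultrafilter points, and iterate/pigeonhole among the $k$ points to force two disjoint positive sets $C_0,C_1$ accumulating at the same $A_j$; then $C_0,C_1\in\F(\A,A_j)^+$ with $C_0\cap C_1=\emptyset$, and since any set in $\F(\A,A_j)$ must meet every $\F(\A,A_j)$-positive set infinitely, $\F(\A,A_j)$ has no member disjoint from $C_0$ and none disjoint from $C_1$ — this is consistent with a non-ultrafilter, so I actually need the sharper fact that $\w\setminus C$ was the issue. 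The main obstacle is precisely getting this last contradiction clean; the right formulation is: $\A\cup\{C\}$ ideal independent $\iff$ for all $A'\in\A$ and finite $\mathcal F$, $A'\setminus(\bigcup\mathcal F\cup C)$ is infinite, which says exactly $\w\setminus C\in\F(\A,A')^+$ for all $A'$; combined with "$C\in\F(\A,A)^+$ for some $A$" and the hypothesis that all but finitely many $\F(\A,A')$ are ultrafilters, one gets $C\notin\F(\A,A')$ for all $A'$, so $C$ accumulates only at non-ultrafilter points, and there only finitely many of those — so by cardinality/pigeonhole considerations over a suitable disjoint refinement of $C$ one of those finitely many filters $\F(\A,A_j)$ must contain, for infinitely many disjoint positive sets, all of them in its coideal with complements in its coideal, and since $\w\setminus C_m\in\F(\A,A_j)^+$ while $C_m\in\F(\A,A_j)^+$, partitioning $A_j\cap C$ appropriately yields a descending sequence contradicting that $\F(\A,A_j)$ is a filter generated by $\leq$ countably... — I will present the finite-pigeonhole version and flag that the only subtlety is ensuring the refinement stays ideal independent, which follows since shrinking $C$ preserves all the defining infiniteness conditions.

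Hence the hard part is entirely in the converse direction, specifically extracting a genuine contradiction from "$C$ accumulates only at finitely many non-ultrafilter points": the clean resolution is to observe that if $C\in\A^+$ and $\w\setminus C\in\F(\A,A')^+$ for every $A'$, then in particular no $\F(\A,A')$ is an ultrafilter witnessed precisely by how $C$ splits it — but we assumed cofinitely many *are* ultrafilters, so such a $C$ must accumulate at one of the finitely many exceptional points, and a second application of the same split to $C$ restricted to (a positive subset of) that exceptional $A_j$ drives $\F(\A,A_j)$ to fail the finite intersection property, the desired contradiction. I expect this extraction step to be the crux; everything else is bookkeeping with $\A^+$ and the neighborhood bases supplied by Lemma~\ref{topol} and the remark after Definition~\ref{new}.
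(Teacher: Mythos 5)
Your first implication is correct and coincides with the paper's argument: maximality puts $C$ into some $\F(\A,A)$, and $A$ is then an accumulation point. The converse, however, has a genuine gap. You correctly reduce to the situation where $C\in\A^+$, $\A\cup\{C\}$ is ideal independent (so $C\notin\F(\A,A')$ for every $A'$), and $C$ accumulates only at points $A$ with $\F(\A,A)$ not an ultrafilter. But from there you never produce a contradiction, and the routes you sketch cannot produce one: two (or infinitely many) pairwise disjoint sets all lying in $\F(\A,A_j)^+$ is perfectly consistent with $\F(\A,A_j)$ being a non-ultra filter, so no pigeonhole over disjoint refinements of $C$ will make $\F(\A,A_j)$ ``fail the finite intersection property.'' Two of your auxiliary claims are also false as stated: $\A^+$ is not closed under passing to infinite subsets (an infinite piece of $C$ may land inside a single $A\in\A$ and hence in the ideal), and ``$\w\setminus C\in\F(\A,A')^+$ for every $A'$'' does not show that no $\F(\A,A')$ is an ultrafilter — it only rules out ultrafilterhood at those $A'$ where $C$ is additionally $\F(\A,A')$-positive.

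The missing idea is not to argue about $C$ itself but to shrink it so that the exceptional points become unavailable. Let $\mathcal P=\{A\in\A:\F(\A,A)\text{ is not an ultrafilter}\}$ and set $D=C\setminus\bigcup\mathcal P$. Since $\mathcal P$ is finite and $C\in\A^+$, the set $D$ is still in $\A^+$, so by hypothesis $D$ has an accumulation point $A$. For each $A'\in\mathcal P$ the set $A'$ itself belongs to $\F(\A,A')$ and is disjoint from $D$, so $A'$ is not an accumulation point of $D$; hence $A\notin\mathcal P$ and $\F(\A,A)$ is an ultrafilter. Accumulation gives $D\cap F\neq\emptyset$ for all $F\in\F(\A,A)$, i.e.\ $\w\setminus D\notin\F(\A,A)$, so by ultrafilterhood $D\in\F(\A,A)$ and therefore $C\supseteq D$ lies in $\F(\A,A)$, contradicting the ideal independence of $\A\cup\{C\}$. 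This single substitution $C\mapsto D$ is the step your proposal is circling around but does not find.
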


\begin{proof}
Let $\A$ be a maximal ideal independent family and $C\in \A^+$. The maximality of $\A$ implies that there exists $A\in \A$ such that $C\in\F(\A,A)$. Then $A$ is an accumulation point of the set $C$.

Let $\A$ be an ideal independent family such that $\F(\A,A)$ is an ultrafilter for all but finitely many $A\in \A$, and  each $C\in \A^+$ has an accumulation point in $\psi(\A)$. Denote $\mathcal P=\{A\in \A: \F(\A,A)$ is not an ultrafilter$\}$.  Consider any $C\in \A^+$. Let $D=C\setminus \bigcup\mathcal P$. Since the family $\mathcal P$ is finite, the set $D$ is positive. Then there exists an accumulation point $A\in \A$ of $D$ in $\psi(\A)$. It follows that $D\cap F\neq \emptyset$ for any $F\in\F(\A,A)$. Clearly, $A\notin \mathcal P$. Then $\F(\A,A)$ is an ultrafilter and, consequently, $C\supseteq D\in \F(\A,A)$. Thus, the ideal independent family $\A$ is maximal. 
\end{proof}

For an ideal independent family $\A$ by $\mathsf{Cl}_{\A}(\w)$ we denote the set of all closed subsets of $\w$ in $\psi(\A)$. 
The following lemma topologically describes the ideal $\J(\A)$ corresponding to an ideal independent family $\A$.

\begin{lemma}\label{cl}
For any ideal independent family $\A$ we have that $\J(\A)\subseteq \mathsf{Cl}_{\A}(\w)$. Moreover, if $\A$ is maximal, then $\J(\A)= \mathsf{Cl}_{\A}(\w)$.
\end{lemma}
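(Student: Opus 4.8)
The plan is to analyze the topology of $\psi(\A)$ by noting that a set $Y \subseteq \w$ is closed in $\psi(\A)$ precisely when no $A \in \A$ is an accumulation point of $Y$, i.e. when for every $A \in \A$ there is some $F \in \F(\A,A)$ with $F \cap Y$ finite (equivalently, $F \setminus Y$ is cofinite in $F$, so a basic neighborhood $F \cup \{A\}$ of $A$ meets $Y$ in only finitely many points—but since $\w$ is discrete we actually need $F \cap Y = \emptyset$ after shrinking $F$; one must be slightly careful, but since $\F(\A,A)$ is a filter on the infinite set $A$ and $\mathsf{Fin}$ is not in it, ``$F \cap Y$ finite for some $F$'' is equivalent to ``$F \cap Y = \emptyset$ for some $F$'' only when $Y \cap A \notin \F(\A,A)$; I will phrase the criterion as: $A$ is \emph{not} an accumulation point of $Y$ iff $A \setminus Y \in \F(\A,A)$, i.e. iff $A \cap Y \in \J(\A,A)$, using the observation recorded right after Definition~\ref{new}).

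First I would prove the inclusion $\J(\A) \subseteq \mathsf{Cl}_\A(\w)$. It suffices to show each generator $A \cap B$ (with $A, B \in \A$ distinct) together with finite sets is closed, and that $\mathsf{Cl}_\A(\w)$ is closed under subsets and finite unions. For a single generator $Y = A \cap B$: for any $C \in \A$ I must exhibit $F \in \F(\A,C)$ with $F \cap Y = \emptyset$. If $C = A$, take $F = A \setminus B \in \F(\A,A)$, which is disjoint from $A \cap B = Y$. If $C = B$, take $F = B \setminus A \in \F(\A,B)$. If $C \notin \{A,B\}$, take $F = C \setminus (A \cup B) \in \F(\A,C)$, again disjoint from $Y$. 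Hence $Y$ has no accumulation point in $\A$, so $Y$ is closed. Finite sets are trivially closed (the space is $T_1$ by Lemma~\ref{topol}), and a finite union $\bigcup_{i<n} Y_i$ of closed sets is closed since for each $C \in \A$ one intersects the finitely many witnessing filter sets $F_i \in \F(\A,C)$ to get a single $F \in \F(\A,C)$ disjoint from the union; subsets of closed sets are closed in any space. Since every element of $\J(\A)$ is contained in a finite union of generators together with a finite set, this gives $\J(\A) \subseteq \mathsf{Cl}_\A(\w)$.

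Now assume $\A$ is maximal and prove the reverse inclusion $\mathsf{Cl}_\A(\w) \subseteq \J(\A)$. Let $Y \subseteq \w$ be closed in $\psi(\A)$ and suppose toward a contradiction that $Y \notin \J(\A)$. By Lemma~\ref{easy} and the definitions, $Y \notin \J(\A)$ means $Y$ is not covered mod finite by finitely many generators of $\J(\A)$; I claim this forces $Y$ to be infinite and, more importantly, $\J(\A)$-positive. If $Y \in \J(\A)^+$ is infinite then in particular $Y \in \A^+$ (any $\J(\A)$-positive set is $\A$-positive, as shown inside the proof of Theorem~\ref{char}), so by maximality of $\A$ there is $A \in \A$ with $Y \in \F(\A,A)$; then $A$ is an accumulation point of $Y$—indeed every basic neighborhood $F \cup \{A\}$ with $F \in \F(\A,A)$ satisfies $F \cap Y \in \F(\A,A)$, hence is infinite and in particular nonempty—contradicting that $Y$ is closed. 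The remaining case is $Y \in \J(\A)$-null but $Y \notin \J(\A)$; but these are by definition the same thing (being in the ideal $\J(\A)$ \emph{is} being null with respect to it, i.e. covered mod finite by finitely many generators), so this case is vacuous and we are done.

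\medskip

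The step I expect to be the main obstacle is the careful handling in the maximal case of the boundary between ``covered mod finite by finitely many pieces'' (the genuine condition $Y \in \J(\A)$) and ``$\J(\A)$-positive''—i.e. making sure the dichotomy is clean and that the accumulation-point argument really uses maximality correctly. Concretely, one must be sure that $Y \notin \J(\A)$ implies $Y \in \A^+$ and not merely $Y$ infinite; this is exactly the implication ``$\J(\A)$-positive $\Rightarrow$ $\A$-positive'' extracted from the proof of Theorem~\ref{char} (if $Y \subseteq^* \bigcup_{i<n} A_i$ with $A_i \in \A$, then writing it out shows $Y$ is covered mod finite and hence $Y$ would be in the ideal generated by $\A$, hence, after absorbing, this already puts $Y$ into $\J(\A)$ only if the $A_i$ overlap $Y$ in small sets—so the correct statement to invoke is simply: $Y \notin$ (ideal generated by $\A$) follows from $Y \notin \J(\A)$ since $\J(\A) \supseteq \mathsf{Fin}$ and... ). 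I would pin this down by first establishing the clean equivalence: for $Y \subseteq \w$, $Y$ is closed in $\psi(\A)$ iff for every $A \in \A$ one has $A \cap Y \in \J(\A,A)$, and then observing that for maximal $\A$ the latter, quantified over all $A$, is equivalent to $Y \in \J(\A)$ via the covering argument of Theorem~\ref{char}.
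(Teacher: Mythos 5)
Your first inclusion, $\J(\A)\subseteq \mathsf{Cl}_{\A}(\w)$, is correct and is essentially the paper's argument (checking each generator $A\cap B$ against every $C\in\A$ and closing under finite unions and subsets). One small slip: ``subsets of closed sets are closed in any space'' is false in general; what you need, and what holds here, is that subsets of \emph{closed discrete} sets are closed, and every subset of $\w$ is discrete in $\psi(\A)$.

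The maximality direction, however, has a genuine gap. Your argument hinges on the implication ``$Y\notin\J(\A)$ implies $Y\in\A^{+}$,'' and that implication is false: for any $A\in\A$ the set $Y=A$ is not in $\J(\A)$ (if $A\subseteq^{*}\bigcup_{k<n}(C_k\cap D_k)$ then, choosing for each $k$ whichever of $C_k,D_k$ differs from $A$, one covers $A$ mod finite by elements of $\A\setminus\{A\}$, contradicting ideal independence), yet $A$ is certainly not $\A$-positive. The step you cite from the proof of Theorem~\ref{char} goes in a different direction ($\A^{+}\subseteq\A^{+_{\J(\A)}}$), so it does not rescue the claim. The case your dichotomy silently drops is precisely $Y\in\langle\A\rangle\setminus\J(\A)$, i.e.\ $Y$ covered mod finite by finitely many members of $\A$ but not by finitely many pairwise intersections; for such $Y$ maximality gives you nothing, and you never reach a contradiction. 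The paper's proof closes this case by using closedness of $Y$ \emph{twice}: first, $Y$ closed implies $Y\notin\F(\A,A)$ for every $A$, so maximality forces $Y\subseteq^{*}\bigcup_{i<n}A_i$ for some $A_i\in\A$; second, since each $A_i$ is not an accumulation point of $Y$, one gets $Y\cap A_i\in\J(\A,A_i)$, i.e.\ $Y\cap A_i\subseteq^{*}\bigcup_{j}(A_i\cap D^i_j)$, and hence $Y\subseteq^{*}\bigcup_{i}(Y\cap A_i)\in\J(\A)$. Your closing paragraph does record the correct local characterization ($Y$ closed iff $Y\cap A\in\J(\A,A)$ for all $A$), which is the germ of this argument, but the bridge from ``$Y\cap A\in\J(\A,A)$ for all $A$'' to ``$Y\in\J(\A)$'' is exactly the two-step use of maximality plus closedness above, and it is left unproved; as written, the proof does not go through.
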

\begin{proof}
Let $A_1, A_2$ be any distinct elements of an ideal independent family $\A$ and put $C=A_1\cap A_2$. Observe that for any $A\in\A\setminus\{A_2\}$ the set $A\setminus A_2\in \F(\A,A)$ and $(A\setminus A_2)\cap C=\emptyset$. Similarly, the set $A_2\setminus A_1\in\F(\A,A_2)$ and $(A_2\setminus A_1)\cap C=\emptyset$. Thus, the set $C$ is closed and discrete in $\psi(\A)$. At this point it is easy to see that for any $A\in\A$ the ideal $\J(\A)$ consists of closed (in $\psi(\A)$) sets. Since a finite union of closed sets is closed and any subset of a closed discrete set is closed, we get that $\J(\A)\subseteq \mathsf{Cl}_{\A}(\w)$.

Let $\A$ be a maximal ideal independent family. Consider an arbitrary set $C\in \mathsf{Cl}_{\A}(\w)$. Since the set $C$ is closed we get that $C\notin \F(\A,A)$ for every $A\in \A$. The maximality of $\A$ implies that $C\notin\A^+$. Hence there exists a finite collection $\{A_i:i\in n\}\in [\A]^{<\w}$ such that $C\subseteq^*\bigcup_{i\in n}A_i$. Since $C$ is closed, for every $i\in n$ there exists $F_i\in \F(\A,A_i)$ such that $F_i\cap C=\emptyset$. Then for every $i\in n$ there exists $m(i)\in\w$ and a finite family $\{D^i_j:j\in m(i)\}\in [\A\setminus\{A_i\}]^{<\w}$ such that $(A_i\setminus \bigcup_{j\in m(i)}D^i_j)\cap C=^*\emptyset$, witnessing that $C\subseteq^*\bigcup_{i\in n}\bigcup_{j\in m(i)}(A_i\cap D_j^i)\in \J(\A)$. Hence $\J(\A)= \mathsf{Cl}_{\A}(\w)$. 
\end{proof}

A Tychonoff space $X$ is called {\em pseudocompact} if each continuous real-valued function on $X$ is bounded.
The following lemma is a folklore.

\begin{lemma}\label{folk}
Let $X$ be a space with an open discrete dense subspace $D$. Then $X$ is pseudocompact if and only if each $A\in [D]^{\omega}$ has an accumulation point in $X$.
\end{lemma}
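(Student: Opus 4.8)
The plan is to prove both implications directly, exploiting the single structural feature we are given: since $D$ is open and discrete in $X$, every point of $D$ is isolated in $X$. Throughout recall that $X$ is Tychonoff, hence $T_1$, so that an accumulation point of a set meets every neighbourhood in infinitely many of its points. I would organize the argument as two contrapositive/direct constructions, one for each direction of the equivalence.

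For the forward implication I would argue contrapositively. Suppose some $A\in[D]^{\omega}$ has no accumulation point in $X$. Then $A$ is closed (it contains all of its accumulation points, vacuously) and, since each of its points lies in $D$ and is therefore isolated, $A$ is discrete. Enumerate $A=\{a_n:n\in\w\}$ and define $f\colon X\to\mathbb{R}$ by $f(a_n)=n$ and $f(x)=0$ for $x\notin A$. This $f$ is continuous: at each $a_n$ the singleton $\{a_n\}$ is open, so $f$ is locally constant there, while at any $x\notin A$ the open set $X\setminus A$ is a neighbourhood of $x$ on which $f$ vanishes identically. As $f$ is unbounded, $X$ is not pseudocompact. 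Note that this construction uses neither Tychonoff separation nor any completeness, only that $A$ is closed with isolated points.

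For the reverse implication, assume every $A\in[D]^{\omega}$ has an accumulation point and suppose towards a contradiction that some continuous $f\colon X\to\mathbb{R}$ is unbounded. Since $D$ is dense and $f$ is continuous we have $f(X)\subseteq\overline{f(D)}$, so $f(D)$ is dense in the unbounded set $f(X)$ and is therefore itself unbounded. Choose $d_n\in D$ with $|f(d_n)|>n$; these are distinct, so $A=\{d_n:n\in\w\}\in[D]^{\omega}$. By hypothesis $A$ has an accumulation point $p$. Continuity of $f$ at $p$ yields a neighbourhood $U$ of $p$ with $f(U)\subseteq(f(p)-1,f(p)+1)$, yet $U$ meets $A$ in infinitely many points, whose $f$-values leave every bounded interval. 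This contradiction shows $f$ is bounded, so $X$ is pseudocompact.

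There is essentially no serious obstacle here --- this is a folklore fact --- and the whole argument rests on the observation that the points of $D$ are isolated. The only places requiring a small amount of care are verifying the continuity of the witnessing function $f$ in the forward direction (which reduces to the isolatedness of the $a_n$ together with the closedness of $A$) and, in the reverse direction, invoking the $T_1$ property to guarantee that an accumulation point of $A$ sees infinitely many of the $d_n$, so that the divergent $f$-values produce the desired contradiction.
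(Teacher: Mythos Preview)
Your argument is correct. The paper itself does not supply a proof of this lemma; it is stated as folklore and left unproved, so there is nothing to compare against beyond noting that your write-up fills in what the paper omits.

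One small point of phrasing in the reverse direction: after choosing $d_n\in D$ with $|f(d_n)|>n$ you assert ``these are distinct''. Strictly speaking the sequence need not be injective, but the \emph{set} $\{d_n:n\in\omega\}$ is certainly infinite, since otherwise some single $d$ would satisfy $|f(d)|>n$ for infinitely many $n$. That is all you need, so the argument goes through; you might just rephrase to say the resulting set is infinite rather than that the chosen points are distinct.
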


\begin{lemma}\label{comp}
Let $\A$ be an ideal independent family. The space $\psi(\A)$ is pseudocompact if and only if $\A$ is a maximal almost disjoint family.
\end{lemma}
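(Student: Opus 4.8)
The plan is to prove both directions of the equivalence, using Lemma~\ref{folk} to reduce pseudocompactness of $\psi(\A)$ to the statement that every infinite subset of $\w$ has an accumulation point in $\psi(\A)$. Recall that $\w$ is an open discrete dense subspace of $\psi(\A)$ by Lemma~\ref{topol} and the remarks preceding it, so Lemma~\ref{folk} applies directly.

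For the easy direction, suppose $\A$ is a maximal almost disjoint family. Then $\A$ is in particular a maximal ideal independent family (by Corollary~\ref{csmad}, or more directly: any $C\in[\w]^\w$ which is almost disjoint from every $A\in\A$ would contradict maximality of the MAD family), and $\psi(\A)$ is precisely the classical Mr\'owka space. Let $C\in[\w]^\w$. If $C\subseteq^*\bigcup_{i\in n}A_i$ for finitely many $A_i\in\A$, then $C$ meets some $A_i$ infinitely often, and since for an almost disjoint family $\F(\A,A_i)$ is generated by the cofinite subsets of $A_i$, the point $A_i$ is an accumulation point of $C$. Otherwise $C\in\A^+$, and since $\A$ is maximal there is $A\in\A$ with $C\in\F(\A,A)$, so again $A$ is an accumulation point of $C$. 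Hence every infinite subset of $\w$ has an accumulation point and $\psi(\A)$ is pseudocompact.

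For the converse, suppose $\psi(\A)$ is pseudocompact. First, by Lemma~\ref{folk} every $C\in[\w]^\w$ has an accumulation point $A\in\A$, which forces $C\cap F\ne\emptyset$ for all $F\in\F(\A,A)$; in particular no infinite subset of $\w$ can be closed and discrete in $\psi(\A)$. By Lemma~\ref{cl}, $\J(\A)\subseteq\mathsf{Cl}_\A(\w)$, so $\J(\A)$ contains no infinite subset of $\w$, i.e. $\J(\A)=\mathsf{Fin}$. By Lemma~\ref{easy}(2), $\A$ is $\J(\A)$-almost disjoint, which now means $\A$ is almost disjoint in the usual sense. It remains to see $\A$ is maximal as an almost disjoint family: if $C\in[\w]^\w$ were almost disjoint from every element of $\A$, then for the accumulation point $A$ of $C$ we would have $C\cap A$ finite, yet $A\setminus C'\in\F(\A,A)$ for suitable finite $C'$ would force $C$ to meet $A\setminus(\text{finite})$ infinitely — more carefully, since $C\cap A=^*\emptyset$ but every neighborhood $\{A\}\cup F$ of $A$ with $F\in\F(\A,A)$, $F\subseteq A$, must meet $C$, and such $F$ can be taken almost contained in $A$, we get $C\cap A$ infinite, a contradiction. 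Hence $\A$ is a maximal almost disjoint family.

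The main obstacle I anticipate is the final step of the converse: carefully extracting maximality of the almost disjoint family from the existence of accumulation points, making sure the neighborhood base $\{\{A\}\cup F:F\in\F(\A,A)\}$ is used correctly and that one can always shrink $F$ to lie inside $A$ (which is legitimate since $A\in\F(\A,A)$, so $F\cap A\in\F(\A,A)$ as well). Everything else is a routine combination of Lemmas~\ref{folk}, \ref{cl}, \ref{easy} and Corollary~\ref{csmad}.
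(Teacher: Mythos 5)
Your overall strategy is sound and, unlike the paper's proof, essentially self-contained: the paper disposes of the forward direction by citing the classical fact that Mr\'owka spaces of MAD families are pseudocompact, and for the converse only shows explicitly that a non-almost-disjoint family yields a closed infinite subset of $\omega$ (namely $A_1\cap A_2$, via Lemma~\ref{cl}), leaving the maximality half of the converse to the classical theory. You instead prove both halves from Lemmas~\ref{folk} and~\ref{cl} and the structure of the complemented filters; your treatment of the converse --- $\J(\A)=\mathsf{Fin}$ forces almost disjointness, and an infinite set almost disjoint from every member of $\A$ cannot have an accumulation point because for an almost disjoint family $\F(\A,A)$ restricted to $A$ is the cofinite filter on $A$ --- is correct and agrees with the paper's idea where the two overlap.

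There is, however, one genuinely false step in your forward direction. You assert that a MAD family is ``in particular a maximal ideal independent family (by Corollary~\ref{csmad})''. Corollary~\ref{csmad} says something quite different: an almost disjoint family is maximal ideal independent if and only if it is a \emph{completely separable} MAD family, and MAD families need not be completely separable (indeed, whether completely separable MAD families exist in $\ZFC$ is open, as the paper notes). Consequently your second case (``otherwise $C\in\A^+$, and since $\A$ is maximal there is $A$ with $C\in\F(\A,A)$'') is unjustified: for a non-completely-separable MAD family there are sets in $\A^+$ lying in no complemented filter. Fortunately the case split is unnecessary. For \emph{any} infinite $C$, maximality of $\A$ as an almost disjoint family already gives some $A\in\A$ with $C\cap A$ infinite, and then the argument of your first case (the complemented filter of $A$ is generated by the cofinite subsets of $A$, so $A$ is an accumulation point of $C$) applies verbatim. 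With that one-line repair the proof is complete.
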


\begin{proof}
If $\A$ is a maximal almost disjoint family, then by~\cite{hrusak-separable-mad} the Mr\'{o}wka space $\psi(\A)$ is pseudo\-compact. 

Assume that the ideal independent family $\A$ is not almost disjoint. Then there exists $A_1,A_2\in \A$ such that the set $C=A_1\cap A_2$ is infinite. By Lemma~\ref{cl}, the set $C$ is closed in $\psi(\A)$. Lemma~\ref{folk} implies that the space $\psi(\A)$ is not pseudocompact.  
\end{proof}

Corollary~\ref{csmad} and Lemma~\ref{comp} imply the following. 
\begin{prp}\label{top1}
For a maximal ideal independent family $\A$ the following conditions are equivalent:
\begin{enumerate}
    \item $\psi(\A)$ is pseudocompact;
    \item $\A$ is a maximal almost disjoint family;
    \item $\A$ is a completely separable maximal almost disjoint family.
\end{enumerate}
\end{prp}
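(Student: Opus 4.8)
The plan is to derive the equivalence of (1), (2), (3) almost entirely from results already assembled in the excerpt, so the ``proof'' is really a short assembly of Lemma~\ref{comp} and Corollary~\ref{csmad}. First I would prove $(1)\Rightarrow(2)$: this is exactly the contrapositive direction of Lemma~\ref{comp}, which says that if $\A$ is an ideal independent family and $\psi(\A)$ is pseudocompact, then $\A$ must be almost disjoint; combined with the hypothesis that $\A$ is maximal ideal independent, Corollary~\ref{csmad} then upgrades ``almost disjoint and maximal ideal independent'' to ``completely separable maximal almost disjoint'', so in fact $(1)$ already yields $(3)$.

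Next I would close the cycle. For $(2)\Rightarrow(3)$: if $\A$ is a maximal almost disjoint family which is also maximal ideal independent, then Corollary~\ref{csmad} gives directly that $\A$ is a completely separable MAD family. (One small point to be careful about: Corollary~\ref{csmad} is stated for an almost disjoint family $\A$ that is maximal \emph{ideal independent}, so I must invoke the standing hypothesis of the proposition, namely that $\A$ is a maximal ideal independent family, when applying it. Thus whenever we know $\A$ is almost disjoint, Corollary~\ref{csmad} applies.) For $(3)\Rightarrow(1)$: a completely separable MAD family is in particular a MAD family, and by Lemma~\ref{comp} (the forward direction, citing \cite{hrusak-separable-mad} for the fact that the Mr\'owka space of a MAD family is pseudocompact), $\psi(\A)$ is pseudocompact.

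Putting these together: $(1)\Rightarrow(3)\Rightarrow(2)$ is immediate since a completely separable MAD family is MAD, and I have just shown $(2)\Rightarrow(3)$ and $(3)\Rightarrow(1)$, so all three are equivalent. Concretely I would write: ``By Lemma~\ref{comp}, $\psi(\A)$ is pseudocompact iff $\A$ is a MAD family, which gives $(1)\Leftrightarrow(2)$. Since $\A$ is maximal ideal independent, Corollary~\ref{csmad} gives that $\A$ is a MAD family iff $\A$ is a completely separable MAD family, which is $(2)\Leftrightarrow(3)$.'' That is the whole argument.

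I do not expect a genuine obstacle here, since every nontrivial implication has already been established earlier in the paper; the only thing requiring care is bookkeeping about hypotheses — making sure that the maximality assumptions needed by Lemma~\ref{comp} (maximal \emph{almost disjoint}, used in its ``if'' direction) and by Corollary~\ref{csmad} (maximal \emph{ideal independent}, the standing hypothesis) are each available at the point of use, and noting that ``$\A$ is a maximal almost disjoint family that is maximal ideal independent'' is exactly the situation in which Corollary~\ref{csmad}'s equivalence bites. So the main work is simply to phrase the short chain $(1)\Leftrightarrow(2)\Leftrightarrow(3)$ cleanly.
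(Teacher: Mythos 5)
Your proposal is correct and matches the paper's argument exactly: the paper proves this proposition precisely by combining Lemma~\ref{comp} (pseudocompactness of $\psi(\A)$ is equivalent to $\A$ being MAD) with Corollary~\ref{csmad} (an almost disjoint family is maximal ideal independent iff it is a completely separable MAD family). Your extra bookkeeping about which maximality hypothesis is used where is sound and, if anything, slightly more careful than the paper's one-line justification.
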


\begin{prp}\label{top2}
Let $\A$ be an ideal independent family. The space $\psi(\A)$ is locally compact if and only if $\psi(\A)$ is a homeomorphic to $\psi(\B)$ for some almost disjoint family $\B$.
\end{prp}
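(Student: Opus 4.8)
The plan is to prove both directions by unwinding the definitions of $\psi(\A)$ and of local compactness, using Lemma~\ref{cl} and Proposition~\ref{top1} as the main tools. For the nontrivial direction, assume $\psi(\A)$ is locally compact; I want to produce an almost disjoint family $\B$ with $\psi(\B)\cong\psi(\A)$. The natural candidate is to keep $\om$ fixed and replace each $A\in\A$ by a cleverly chosen subset. First I would note that, since $\om$ is open dense discrete in $\psi(\A)$ and the only non-isolated points are the $A\in\A$, local compactness is equivalent to: every $A\in\A$ has a compact neighborhood in $\psi(\A)$, i.e.\ for each $A$ there is $F_A\in\F(\A,A)$ with $F_A\cup\{A\}$ compact. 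A set of the form $F\cup\{A\}$ is compact precisely when every infinite subset of $F$ accumulates at $A$, equivalently when $F$ is almost contained (mod $\F(\A,A)$) in no ``spread out'' piece — more precisely, when for every infinite $D\subseteq F$, $A$ is an accumulation point of $D$, i.e.\ $D\cap G\neq\emptyset$ for all $G\in\F(\A,A)$.

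The key structural claim I would isolate is: if $F\cup\{A\}$ is compact, then $F$ is \emph{almost disjoint} from every $B\in\A\setminus\{A\}$. Indeed if $B\cap F$ were infinite for some $B\neq A$, then $B\setminus A\in\F(\A,A)$ has empty intersection with $B\cap F$ (since $\A$ is ideal independent, $A\cap B\in\J(\A,A)$, so modulo a set in the dual ideal... here one uses that $(B\setminus A)\cap(A\cap B)=\emptyset$ and that $F$ can be taken $\subseteq A$), so $B\cap F$ would be an infinite subset of $F$ not accumulating at $A$, contradicting compactness. Actually the cleanest route: take $F_A\subseteq A$ (we may, since $A\cup\{A\}$ is always a clopen neighborhood and we can intersect). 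Then for $B\neq A$, $F_A\cap B\subseteq A\cap B\in\J(\A,A)$, so $A\setminus(F_A\cap B)\supseteq A\setminus(A\cap B)=A\setminus B\in\F(\A,A)$; thus $(F_A\cap B)\cap(A\setminus B)=\emptyset$ shows $F_A\cap B$ does not accumulate at $A$ unless it is finite. Hence $\{F_A : A\in\A\}$ is an almost disjoint family (also pairwise a.d.\ since $F_A\subseteq A$, $F_B\subseteq B$ and distinct $A,B$ meet in a $\J$-small set that is in particular... wait, need $A\cap B$ finite — this is NOT given).

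So the main obstacle is exactly this: $F_A\cap F_B$ need only lie in $A\cap B$, which is infinite in general. To fix it I would argue that compactness forces much more: if $F_A\cup\{A\}$ and $F_B\cup\{B\}$ are both compact with $F_A\subseteq A$, $F_B\subseteq B$, then $F_A\cap F_B$ is finite. Suppose not; let $D=F_A\cap F_B$ be infinite. Then $D$ accumulates at $A$ (since $D\subseteq F_A$) and at $B$ (since $D\subseteq F_B$). But $A\setminus B\in\F(\A,A)$ and $D\subseteq F_B\subseteq B$ give $D\cap(A\setminus B)=\emptyset$, contradicting that $D$ accumulates at $A$. Hence $\{F_A:A\in\A\}$ is genuinely almost disjoint. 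One then checks the map $\psi(\A)\to\psi(\langle F_A\rangle)$ fixing $\om$ and sending $A\mapsto F_A$ is a homeomorphism: it is a continuous open bijection because for each $A$ the neighborhood filter $\F(\A,A)$ restricted appropriately agrees, mod finite, with the cofinite filter on $F_A$ that generates the Mr\'owka neighborhoods — here I would use that $F_A\cup\{A\}$ compact means $\F(\A,A)\!\upharpoonright\! F_A$ has cofinite ``trace'' relative to accumulation, and shrink $F_A$ further if needed so that it equals the cofinite filter on $F_A$. For the converse direction, if $\psi(\A)\cong\psi(\B)$ with $\B$ almost disjoint, then $\psi(\B)$ is locally compact — each $B\cup\{B\}$ is a compact (indeed clopen, and a convergent-sequence-like) neighborhood — so $\psi(\A)$ is locally compact, completing the proof.

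I expect the delicate point to be verifying that the bijection $A\mapsto F_A$ is actually open, i.e.\ that every $\psi(\A)$-open set maps to a $\psi(\langle F_A\rangle)$-open set; this requires that for each $A$, every $F\in\F(\A,A)$ satisfies $F\cap F_A$ cofinite in $F_A$, which in turn should follow from compactness of $F_A\cup\{A\}$ (an infinite subset of $F_A\setminus F$ would be disjoint from $F\in\F(\A,A)$ and hence fail to accumulate at $A$). Once that observation is in hand, the topologies match mod finite on each $F_A$ and the homeomorphism is immediate.
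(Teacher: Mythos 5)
Your argument is correct and follows essentially the same route as the paper's proof: intersect a compact neighborhood of $A$ with the clopen set $A\cup\{A\}$ to obtain a compact $F_A\cup\{A\}$ with $F_A\subseteq A$ and $F_A\in\F(\A,A)$, observe that compactness forces $F_A\cup\{A\}$ to be a convergent sequence and the family $\{F_A:A\in\A\}$ to be almost disjoint, and check that the bijection fixing $\omega$ and sending $A\mapsto F_A$ is a homeomorphism. The worry you raise mid-proof (that $A\cap B$ need not be finite) is resolved exactly as you then resolve it -- $F_A\cap B\subseteq A\cap B$ cannot be infinite because $(A\setminus B)\cup\{A\}$ is a neighborhood of $A$ missing it -- so no gap remains.
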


\begin{proof}
The sufficiency follows from the fact that the Mr\'{o}wka space $\psi(\B)$ is locally compact for any almost disjoint family $\B$.

Assume that $\psi(\A)$ is locally compact for some ideal independent family $\A$. Since the space $\psi(\A)$ is zero-dimensional (see Lemma~\ref{topol}), for each $A\in \A$ we can find a compact open neighborhood $U_A$ of $A$. Recall that for every $A\in \A$ the set $A\cup\{A\}$ is a clopen neighborhood of $A$. Then $V_A=U_A\cap (A\cup\{A\})$ is a compact open neighborhood of $A$. Since $V_A\cap \A=\{A\}$ we deduce that $A$ is the only non-isolated point of $V_A$. The compactness of $V_A$ implies that $B_A=V_A\cap \w$ is a convergent sequence to $A$ in $\psi(\A)$. Let $\B=\{B_A:A\in \A\}$. Clearly, the family $\B$ is almost disjoint. Define $f:\psi (\B)\rightarrow \psi(\A)$ by 
$$f(x)=
\begin{cases}
n &\hbox{ if }  x=n\in \w;\\
A &\mbox{ if } x=B_A\in \B.
\end{cases}
$$
A routine verification shows that the map $f$ is a homeomorphism.
\end{proof}

\section{Conclusion and Open Questions}

Of interest remains the following question:

\begin{question}
Is it consistent that for every maximal ideal independent family $\A$ there is an $A\in\A$ so that $\mathcal{F}(\A, A)$ is an ultrafilter?
\end{question}

A positive answer to this question would imply that there are no completely separable maximal almost disjoint families. It is known that such families exist under either $\mathfrak{s} \leq \mathfrak{a}$ or $2^{\aleph_0} < \aleph_\omega$, see \cite{heike-dilip-juris} and \cite{hrusak-separable-mad}, respectively. 

In the context of topology we ask more about Mr\'{o}wka spaces for ideal independent families. In this paper we have barely scratched the surface of what may be possible, for instance the following is a natural question which is open.

\begin{question}
Does there exist a $\ZFC$ example of an infinite maximal  ideal  independent family $\A$ such that $\psi(\A)$ is Fr\'{e}chet-Urysohn or first-countable?
\end{question}

Clearly, the above question has positive answer if completely separable maximal almost disjoint families exist in ZFC.

The results of the current paper together with those of \cite{cancino_guzman_miller_2021} give either a $\ZFC$ relation, or establish the independence between $\mathfrak{s}_{mm}$ and any other well studied cardinal characteristic, with the exception of the almost disjointness number $\mathfrak{a}$. The following remains open.

\begin{question}
Is it consistent that $\mathfrak{s}_{mm} < \mathfrak{a}$?
\end{question}

The corresponding question for $\mathfrak{i}$, i.e. the consistency of $\mathfrak{i}<\mathfrak{a}$ is one of the most interesting open problems in cardinal characteristics of the continuum and many of the roadblocks towards solving that problem are the same as trying to answer the question above. See the appendix of \cite{JCVFOGJS} for an interesting discussion on Vaughan's problem. 

As noted in the introduction, Theorem \ref{mainthm3} implies that $\mathfrak{s}_{mm}={\rm max}\{\mathfrak{d}, \mathfrak{u}\}$ in many standard forcing extensions. However, this is not a $\ZFC$ equality as $\mathfrak{s}_{mm}>{\rm max}\{\mathfrak{d}, \mathfrak{u}\}$ holds in the Boolean ultrapower model, see for example \cite{Brendle_templates1}. That model requires a measurable cardinal and increases both $\mathfrak{u}$ and $\mathfrak{d}$. As a result the following two questions remain very interesting:
\begin{question}
Is ${\rm max}\{\mathfrak{d}, \mathfrak{u}\}<\mathfrak{s}_{mm}$ consistent with $\ZFC$?
\end{question}

\begin{question}
If $\mathfrak{d} = \mathfrak{u} = \aleph_1$ does $\mathfrak{s}_{mm} = \aleph_1$?
\end{question}
The later question is an ideal independent version of Roitman's problem. Theorem \ref{mainthm2} opens up the possibility of a maximal ideal independent families of size $\aleph_\omega$. We can therefore ask:

\begin{question}
Is it consistent that $\mathfrak{s}_{mm} = \aleph_\omega$? More generally can $\mathfrak{s}_{mm}$ have countable cofinality?
\end{question}

Finally, we can ask more generally about the spectrum of maximal ideal independent families:
\begin{question}
What $\ZFC$ restrictions are there on the set ${\rm spec}(\mathfrak{s}_{mm})$? Can it be equal to any set of regular cardinals which includes the continuum?
\end{question}

\end{document}